%% Template for the submission to:
%%   The Annals of Statistics [AOS]
%%
%%%%%%%%%%%%%%%%%%%%%%%%%%%%%%%%%%%%%%%%%%%%%%
%% In this template, the places where you   %%
%% need to fill in your information are     %%
%% indicated by '???'.                      %%
%%                                          %%
%% Please do not use \input{...} to include %%
%% other tex files. Submit your LaTeX       %%
%% manuscript as one .tex document.         %%
%%%%%%%%%%%%%%%%%%%%%%%%%%%%%%%%%%%%%%%%%%%%%%

\documentclass[aos]{imsart}

%% Packages
\RequirePackage{amsthm,amsmath,amsfonts,amssymb}
\RequirePackage[numbers]{natbib}
\RequirePackage[colorlinks,citecolor=blue,urlcolor=blue]{hyperref}%% uncomment this for coloring bibliography citations and linked URLs
\RequirePackage{graphicx}%% uncomment this for including figures

\startlocaldefs
%%%%%%%%%%%%%%%%%%%%%%%%%%%%%%%%%%%%%%%%%%%%%%
%%                                          %%
%% Uncomment next line to change            %%
%% the type of equation numbering           %%
%%                                          %%
%%%%%%%%%%%%%%%%%%%%%%%%%%%%%%%%%%%%%%%%%%%%%%
\numberwithin{equation}{section}
%%%%%%%%%%%%%%%%%%%%%%%%%%%%%%%%%%%%%%%%%%%%%%
%%                                          %%
%% For Axiom, Claim, Corollary, Hypothesis, %%
%% Lemma, Theorem, Proposition              %%
%% use \theoremstyle{plain}                 %%
%%                                          %%
%%%%%%%%%%%%%%%%%%%%%%%%%%%%%%%%%%%%%%%%%%%%%%
\theoremstyle{plain}
\newtheorem{lemma}{Lemma}[section]
\newtheorem{theorem}[lemma]{Theorem}
\newtheorem{proposition}[lemma]{Proposition}

%%%%%%%%%%%%%%%%%%%%%%%%%%%%%%%%%%%%%%%%%%%%%%
%%                                          %%
%% For Assumption, Definition, Example,     %%
%% Notation, Property, Remark, Fact         %%
%% use \theoremstyle{remark}                %%
%%                                          %%
%%%%%%%%%%%%%%%%%%%%%%%%%%%%%%%%%%%%%%%%%%%%%%
\theoremstyle{remark}
\newtheorem{example}[lemma]{Example}
\newtheorem{remark}[lemma]{Remark}
\newtheorem{definition}[lemma]{Definition}
%%%%%%%%%%%%%%%%%%%%%%%%%%%%%%%%%%%%%%%%%%%%%%
%% Please put your definitions here:        %%
%%%%%%%%%%%%%%%%%%%%%%%%%%%%%%%%%%%%%%%%%%%%%%
\usepackage{ifthen}
\usepackage{dsfont}
\usepackage{amssymb,latexsym}
\usepackage{subfigure}

\newcommand{\wt }{\widetilde}

\newcommand{\bfR}{{\mathbf R}}

\newcommand{\bfX}{{\mathbf X}}

\newcommand{\bfx}{{\mathbf x}}

\newcommand{\la}{\lambda}

\newcommand{\beao}{\begin{eqnarray*}}
\newcommand{\eeao}{\end{eqnarray*}}
\newcommand{\beam}{\begin{eqnarray}}
\newcommand{\eeam}{\end{eqnarray}}
\newcommand{\barr}{\begin{array}}
\newcommand{\earr}{\end{array}}

\newcommand{\tI}{\widetilde I}
\newcommand{\bco}{\begin{corrolary}}
\newcommand{\eco}{\end{corrolary}}

\newcommand{\E}{\mathbb{E}}
\renewcommand{\P}{\mathbb{P}}
\newcommand{\1}{\mathds{1}}
\newcommand{\R}{\mathbb{R}}
\newcommand{\N}{\mathbb{N}}

\newcommand{\bfS}{{\mathbf S}}

\newcommand{\Var}{\operatorname{Var}}

\newcommand{\Frechet}{Fr\'{e}chet }

\DeclareMathOperator{\e}{e}

\newcommand{\X}{{\mathbf X}}
\newcommand{\Y}{{\mathbf Y}}

\newcommand{\dint}{\,\mathrm{d}}

\newcommand{\vep}{\varepsilon}
\newcommand{\nto}{n \to \infty}

\newcommand{\rhs}{right-hand side}

\newcommand{\tr}{\operatorname{tr}}

\renewcommand{\path}{\operatorname{path}}
\newcommand{\partition}{\operatorname{partition}}

\newcommand{\diag}{\operatorname{diag}}

\newcommand{\MP}{Mar\v cenko--Pastur }
\newcommand{\runs}{\operatorname{runs}}
\newcommand{\simples}{\operatorname{simples}}

\newcommand{\cas}{\stackrel{\rm a.s.}{\rightarrow}}

\newcommand{\eid}{\stackrel{d}{=}}

\newcommand{\ahmplaw}{$\alpha$-heavy MP law}
\newcommand\iv[1]{{[\![ 1,#1]\!]}}  

\endlocaldefs

\begin{document}

\begin{frontmatter}
%%%%%%%%%%%%%%%%%%%%%%%%%%%%%%%%%%%%%%%%%%%%%%
%%                                          %%
%% Enter the title of your article here     %%
%%                                          %%
%%%%%%%%%%%%%%%%%%%%%%%%%%%%%%%%%%%%%%%%%%%%%%
\title{Limiting distributions for eigenvalues of sample
correlation matrices from  heavy-tailed populations}
%\title{A sample article title with some additional note\thanksref{T1}}
\runtitle{Eigenvalue distributions of heavy-tailed correlation matrices}
%\thankstext{T1}{A sample of additional note to the title.}

\begin{aug}
\author[A]{\fnms{Johannes} \snm{Heiny}\ead[label=e1]{johannes.heiny@rub.de}} 
\and
\author[B]{\fnms{Jianfeng} \snm{Yao}\ead[label=e2]{jeffyao@hku.hk}}
%%%%%%%%%%%%%%%%%%%%%%%%%%%%%%%%%%%%%%%%%%%%%%
%% Addresses                                %%
%%%%%%%%%%%%%%%%%%%%%%%%%%%%%%%%%%%%%%%%%%%%%%
\address[A]{Department of Mathematics, Ruhr University Bochum, \printead{e1}}

\address[B]{Department of Statistics and Actuarial Science, The University of Hong Kong. \printead{e2}}
\end{aug}

\begin{abstract}
Consider a $p$-dimensional population $\bfx\in\R^p$
    with iid coordinates that are regularly varying with index $\alpha\in (0,2)$. 
    Since the variance of $\bfx$ is infinite, the diagonal elements of the sample covariance matrix
    $\bfS_n=n^{-1}\sum_{i=1}^n {\bfx_i}\bfx'_i$ based on a sample $\bfx_1,\ldots,\bfx_n$ from the population tend to infinity as $n$ increases and it
    is of interest to use instead the sample correlation matrix
    $\bfR_n= \{\diag(\bfS_n)\}^{-1/2}\, \bfS_n\{\diag(\bfS_n)\}^{-1/2}$.
    This paper finds the limiting distributions of the eigenvalues of
    $\bfR_n$ when both the dimension $p$ and the sample size $n$ grow to
    infinity such that  $p/n\to \gamma \in (0,\infty)$.
    The family of limiting distributions $\{H_{\alpha,\gamma}\}$ is
    new and depends on the two parameters $\alpha$ and
    $\gamma$.
  %$\bfR= (\diag(\X\X'))^{-1/2} \X\X'\diag(\X\X'))^{-1/2}$, where
  The moments of $H_{\alpha,\gamma}$ are fully identified as sum of two
  contributions: the first from  the classical \MP law 
  and a second due to heavy tails.
  Moreover, the family $\{H_{\alpha,\gamma}\}$ has  continuous
  extensions  at the  boundaries  $\alpha=2$ and $\alpha=0$ leading to
   the \MP law and a modified Poisson distribution, respectively.

  Our proofs use  the method of moments, the path-shortening algorithm
  developed in \cite{heiny:mikosch:2017:corr} and some  novel graph
  counting combinatorics. As a consequence, the moments of
  $H_{\alpha,\gamma}$ are expressed in terms of combinatorial objects
  such as Stirling numbers of the second kind. A simulation study on
  these limiting distributions $H_{\alpha,\gamma}$ is also provided for
  comparison with the \MP law.
\end{abstract}

\begin{keyword}[class=MSC]
\kwd[Primary ]{60B20}
\kwd[; secondary ]{60F05 60G10 60G57 60G70}
\end{keyword}

\begin{keyword}
\kwd{Sample correlation matrix}
\kwd{limiting spectral distribution}
\kwd{method of moments}
\kwd{infinite variance}
\kwd{\MP~law}
\kwd{stable distribution}
\end{keyword}

\end{frontmatter}
%%%%%%%%%%%%%%%%%%%%%%%%%%%%%%%%%%%%%%%%%%%%%%
%% Please use \tableofcontents for articles %%
%% with 50 pages and more                   %%
%%%%%%%%%%%%%%%%%%%%%%%%%%%%%%%%%%%%%%%%%%%%%%
%\tableofcontents

%%%%%%%%%%%%%%%%%%%%%%%%%%%%%%%%%%%%%%%%%%%%%%
%%%% Main text entry area:
\section{Introduction}\label{sec:1}\setcounter{equation}{0}

  Consider a $p$-dimensional population $\bfx=(X_1,\ldots,X_p)\in\R^p$
  where the coordinates  $X_i$  are independent non-degenerated
  random variables and  identically distributed as a centered random variable $\xi$. 
  For a sample $\bfx_1,\ldots,\bfx_n$ from the population we construct the 
  data matrix $\X=\X_n=(\bfx_1,\ldots,\bfx_n)=(X_{ij})_{1\le    i\le p; 1\le j  \le n}$, the sample covariance matrix $\bfS$
  and the sample correlation matrix $\bfR$ as follows:
  \begin{equation*}
	\begin{split}
 % \X &= & \X_n=(\bfx_1,\ldots,\bfx_n)=(X_{ij})_{1\le    i\le p; 1\le j  \le n},
  \bfS &= \bfS_n =\frac1n \sum_{i=1}^n {\bfx_i}\bfx'_i =\frac1n \X\X'\,,
  \\
  \bfR &=\bfR_n =\{\diag(\bfS_n)\}^{-1/2}\, \bfS_n\{\diag(\bfS_n)\}^{-1/2}= \Y \Y'\,.
	\end{split}
  \end{equation*}
  Here the standardized  matrix $\Y=\Y_n=(Y_{ij})_{1\le    i\le p; 1\le j  \le n}$ for the correlation matrix has
  entries 
  \beam\label{def:R}
  Y_{ij}=Y_{ij}^{(n)}=\frac{X_{ij}}{\sqrt{X_{i1}^2+\cdots+X_{in}^2}}\,,
  \eeam
  which depend on $n$. Throughout the paper, we often suppress the dependence on $n$ in our notation.

  Both the sample covariance matrix $\bfS$ and
  the sample correlation matrix $\bfR$ are
  fundamental tools in
  multivariate statistical analysis such as 
  PCA, canonical correlation analysis, classification  or hypothesis testing on
  population covariance matrix \cite{And03}.
  A large amount of recent literature is devoted to their study in a
  high-dimensional scenario where
  $p$ and $n$ are of comparable magnitude. We  consider the
  asymptotic regime
  \begin{equation}\label{Cgamma}
    p=p_n \to \infty \quad \text{ and } \quad \frac{p}{n}\to \gamma\in (0,\infty)\,,\quad \text{ as } \nto\,. \tag{$C_\gamma$}
  \end{equation}
Random  matrix theory (RMT) has
  provided relevant tools in this perspective, see \cite{yao:zheng:bai:2015} for a
  recent synthesis.
	
  Recall that if ${\bf A}$ is a matrix with $p$ real eigenvalues
  $\la_{1}( {\bf A} ) \ge \cdots \ge\la_{p}(  {\bf A})$, 
  its  {\em empirical spectral distribution}
  (ESD)
  is the normalized counting measure of the  eigenvalues, that is
  $  F_{\bf A}= p^{-1} \sum_{i=1}^p  \delta_{ \lambda_i(\bf A)}$.
  In the finite variance case with  $\E\xi^2<\infty$,
  the spectral properties of the sample covariance matrix $\bfS$
  have been well studied in RMT since the pioneering work
  \cite{marchenko:pastur:1967} where it is shown that $F_{\bfS}$ converges weakly to the
  celebrated \MP (MP) law.
  Subsequent developments include several  ground-breaking results such as  the
  convergence of the largest eigenvalue $\lambda_1(\bf S)$ and the smallest eigenvalue $\lambda_p(\bfS)$ to the edges of  the MP law
  \citep{BaiYin88a,tikhomirov:2015},
  asymptotic  normality of linear spectral statistics of $\bfS$ \citep{BS04},  or its edge
  universality towards the Tracy-Widom law \citep{johnstone:2001,Peche2012,PillaiYin2014}. Apart from the convergence of $\lambda_p(\bfS)$ all those results require a finite fourth moment $\E \xi^4$.
	
	If $\E \xi^4=\infty$, the theory for the eigenvalues and eigenvectors of $\bfS$ is quite different from the classical \MP~theory which applies in the light-tailed case. For example, if $\xi$ is regularly varying with index $\alpha\in (0,4)$, that is 
\beam\label{eq:regvar}
\P(|\xi|>x)=  L(x)\, x^{-\alpha}\,,\qquad x>0\,,
\eeam
for a function $L$ that is slowly varying at infinity, then the properly normalized largest eigenvalue of $\bfS$ converges to a \Frechet distribution with parameter $\alpha/2$. A detailed account on the developments in the heavy-tailed case can be found in \cite{auffinger:arous:peche:2009,basrak:heiny:jung:2020,davis:heiny:mikosch:xie:2016,heiny:mikosch:2017:iid,heiny:mikosch:2019:bernoulli,heiny2020large,soshnikov:2004,soshnikov:2006}.
The limiting spectral distribution in the infinite variance case $\E\xi^2 =\infty$ was found in \cite[Theorem~1.10]{belinschi:dembo:guionnet:2009} 
and \cite[Theorem~1.6]{arous:guionnet:2008}.  Under \eqref{Cgamma} and assuming $\xi$ is regularly varying with index $\alpha\in (0,2)$, they proved that the empirical spectral distribution of the suitably normalized $\bfS$ converges weakly to a probability measure with infinite support that depends on the parameters $\alpha$ and $\gamma$.

%  \cite{bai:silverstein:2010} and \cite{LP}.
  
  In contrast,
  the study of the high-dimensional sample correlation matrix $\bfR$
  is more recent and more limited. A fundamental reason is that compared
  to the original data matrix $\X$, the
  entries ${Y_{ij}}$ of the standardized matrix $\Y$ are no longer independent within
  the same row (the different rows remain independent identically distributed (iid)). This makes the
  correlation matrix more challenging to study. 
  Jiang \cite{Jiang2004b} first established that if $\E[\xi^2]<\infty$, 
  the ESD $F_{\bfR}$ also converges weakly to the MP law. Jiang \cite{jiang:2004b} also analyzed the asymptotic distribution of the largest off-diagonal entry of $\bfR$ and proved that (suitably standardized) $\max_{i<j} |R_{ij}|$ tends to a Gumbel distributed random variable. Later \cite{zhou:2007} found the necessary assumption $\E|\xi|^{6-\vep}<\infty$ for the Gumbel limit and \cite{heiny2020point} studied the point process of all off-diagonal entries.	
  When  $\xi$ has a  subexponential tail (which implies the existence of
  moments of all orders),  edge universality towards the Tracy-Widom law
  was established
  for  the sample correlation matrix 
  $\bfR$ in  \cite{Bao2012,Pillai2012}.
  Among recent 
  developments,  a  central limit theorem
  for linear spectral statistics of  $\bfR$ was
  established in \cite{Gao2017}
  under the finite fourth moment condition $\E[\xi^4]<\infty$, and
  \cite{Zheng2019} proved asymptotic normality of 
  a series of test statistics for 
  one-, two-  or multiple sample hypotheses on
  population correlation matrices.
  
  One common  feature shared by these recent developments
  on sample correlation matrix $\bfR$ is that under the finite second
  moment condition $\theta=\E[\xi^2]<\infty$, the normalizing  denominator 
  $S_{ii}=\{X_{i1}^2+\cdots+X_{in}^2\}/n$ in the definition~\eqref{def:R}  of
  $Y_{ij}$ almost surely converges to $\theta$ by the law of large
  numbers.  
	By Lemma 2 in \cite{bai:yin:1993}, the ``uniform  approximation'' $\max_{i} |S_{ii} -\theta | \cas 0$ is equivalent to $\E\xi^4<\infty$. Then Weyl's eigenvalue perturbation inequality yields that $\max_i|\lambda_i(\bfR)-\theta^{-1} \lambda_i(\bfS)|\cas 0$. As a consequence, the spectral properties of $\bfR$ and $\bfS$ are asymptotically equivalent.
This has been generalized to population correlation matrices with uniformly bounded spectrum in \cite[Theorem~1]{elkaroui:2009}.
	%Theorem 1 in \cite[Theorem 1]{elkaroui:2009} proves that if the spectral norm of the population correlation matrix is uniformly bounded {\red and ???} $\E[\xi^4 (\log \xi)^{2+\vep}]<\infty$, then the spectral properties of $\bfR$ and $\bfS$ are asymptotically equivalent.	
	%If these denominators can be   ``uniformly  approximated''by their limit, $\bfR$ and   the sample covariance matrix ${\bf S}_n$ become asymptotically equivalent.   
	Therefore,  a main step in the above references on 
  the correlation
  matrix $\bfR$ relies on a 
  precise estimate of the error in the above approximation.
  For example, in
  \cite{Jiang2004b,Bao2012,Pillai2012}, this approximation
  error is  shown to
  be negligible and the results obtained for $\bfR$ are the same as
  those known for $\bfS$.
  In this paper, we study the infinite variance case with
  $\theta=\E[\xi^2]=\infty$.  This approximation argument breaks down.
  Indeed, it will be shown that the various limits of $\bfR$ are not
  anymore related to their counterparts for $\bfS$.

Particularly, a refinement of the result of \cite{Jiang2004b}  
  is proposed in \cite{heiny:mikosch:2017:corr}.
Assume for a moment that $\xi$ is symmetrically distributed,  that is,
 $\xi \eid -\xi$.
Theorem 3.1 in \cite{heiny:mikosch:2017:corr} shows that if
\begin{equation}\label{eq:ny4}
\lim_{\nto} n\,\E [ Y_{11}^4 ] =0\,,
\end{equation}
then $F_{\bfR}$ converges weakly almost surely to $\sigma_{MP,\gamma}$,
the Mar\v{c}enko-Pastur law with index $\gamma>0$  (see   \eqref{MPlaw}).
Conversely, if condition \eqref{eq:ny4} does not hold, i.e.,
$\liminf_{\nto} n\,\E [ Y_{11}^4 ] >0$,
then 
$\liminf_{\nto} \E\big[\int x^k dF_{\bfR_n}(x)\big]> \int x^k d\sigma_{MP,\gamma}(x)$ for 
$k\ge 4$.
Therefore \eqref{eq:ny4} is a necessary and sufficient condition for the convergence of $F_{\bfR_n}$ to the MP law.

In \cite{gine:goetze:mason:1997} it was proved that condition \eqref{eq:ny4} holds if the distribution of 
$\xi$ is in the domain of attraction of the normal law, which is equivalent to the function $f(x)=\E[\xi^2 \1_{\{|\xi|\le x\}}]$ being slowly varying. They also derive the formula
\begin{equation*}
\E[ Y_{11}^4 ] =  \int_0^{\infty} 
 t (\E[\e^{-t \xi^2}])^{n-1}   \E[\xi^{4}\e^{-t \xi^2} ] \, \dint t\,.
\end{equation*}

Our focus is on the case where condition \eqref{eq:ny4} is violated
(which in particular implies $E\xi^2=\infty$).
Proposition~1 in \cite{mason:zinn:2005} asserts that $\xi$ is regularly varying with index $\alpha\in (0,2)$ if and only if
\begin{equation}\label{eq:limita<2}
\lim_{\nto} n\,\E[Y_{11}^4] =1-\frac{\alpha}{2}\,.
\end{equation}
Hence, \eqref{eq:ny4} does not hold if $\xi$ is regularly varying with index $\alpha\in (0,2)$.
% Note that \eqref{eq:regvar} is equivalent to $\xi^2$ being in the domain of attraction of a stable law with parameter $\alpha/2$. %It is well known that the variance of $\xi$ is infinite if the tail of $\xi$ is given by \eqref{eq:regvar} with $\alpha<2$. 

\subsection*{About this paper} 
As seen in the above discussion, the sample correlation matrix $\bfR$ has mainly been studied under the finite fourth moment assumption. In the intermediate regime, where $\E\xi^4=\infty$ and $\E\xi^2<\infty$, the limiting spectral distribution is known to be the MP law and \cite{heiny:mikosch:2017:corr} studied the extreme eigenvalues. Under infinite variance, the limiting spectral distribution of $\bfS$ has been characterized, whereas no results on the sample correlation matrix $\bfR$ seem to be available in the literature.

  By assuming that the distribution of $\xi$ is symmetric and 
  regularly varying with index
  $\alpha\in(0,2)$, we establish in this paper that the sequence of
  ESDs $F_{\bfR}$ converges weakly to a new distribution $H_{\alpha,\gamma}$
  termed as \ahmplaw\ with parameter $\gamma$. This result is
  introduced in Section~\ref{ssec:hMP} (Theorem~\ref{thm:mainsimplified})
  where  comparison with  the MP law $\sigma_{MP,\gamma}$ is also
  proposed. Theorem~\ref{thm:limit} shows that the class of distributions $H_{\alpha,\gamma}$ can be
  extended 
  continuously  at the 
  boundaries $\alpha=2$ and $\alpha=0$,  
  yielding the MP law $\sigma_{MP,\gamma}$ and a modified Poisson
  distribution, respectively.  
 Subsequently  in Section~\ref{ssec:estimate}, we propose a
    consistent estimator for the tail index $\alpha$.
    %The last Subsection~\ref{ssec:aspect} provides further discussions on the properties of $\alpha$-heavy MP laws  and related literature on sample covariance matrices.  

 The remaining sections of the paper are devoted to the proofs of
  Theorems \ref{thm:mainsimplified} and \ref{thm:limit}. Our main tool is a moment method that required a
  specific and careful counting of relevant graphs  to cope with exploding second moments of the
  matrix entries $\{X_{ij}\}$. Section~\ref{sec:moments} presents the
  main steps of this
   moment method based on a {\em path-shortening algorithm} that was developed in \cite{heiny:mikosch:2017:corr}.
  Section~\ref{sec:F(I)} establishes the combinatorics on associated
  graph counting for the moment method by using set partitions.
  The proof of our main result Theorem~\ref{thm:mainsimplified} is then  completed
  in Section~\ref{sec:mainresult}, which also contains the formula for the moments of the \ahmplaw s $H_{\alpha,\gamma}$.
  Finally, Section~\ref{sec:boundary} proves Theorem~\ref{thm:limit}.

\section{Main results}\label{sec:main}

\subsection{The family of $\alpha$-heavy MP laws}\label{ssec:hMP}

Recall that for $\gamma >0$ the \MP law $\sigma_{MP,\gamma}$ is 
\begin{equation}
  \label{MPlaw}
  \sigma_{MP,\gamma}(\dint x) = f_{\gamma}(x)\dint x + (1-\gamma^{-1}) \1_{(1,\infty)}(\gamma) \delta_0(\dint x),
\end{equation}
where the density of the absolutely continuous part is 
\begin{equation*}
  f_\gamma(x) =
  \frac{\sqrt{( b_\gamma-x)(x-a_\gamma )}}{2\pi \gamma x} \1_{
      [a_\gamma, b_{\gamma}]}(x)\,,\quad  x\in\R, 
\end{equation*}
with $a_\gamma=(1-\sqrt{\gamma})^2 $  and $b_\gamma=(1+\sqrt{\gamma})^2$. 
Its moments are
\begin{equation}\label{eq:momentsmp}
  \beta_k(\gamma)=\int_{a_{\gamma}}^{b_{\gamma}} x^k d\sigma_{MP,\gamma}(x)=\sum_{r=1}^{k} \frac{1}{r} \binom{k}{r-1}\binom{k-1}{r-1}\gamma^{r-1}\,,\quad k\ge 1\,.
\end{equation}.  

In this paper we find a family of new distributions
$\{H_{\alpha,\gamma}\}$ for parameters $\alpha\in(0,2)$ and
$\gamma>0$. We call $H_{\alpha,\gamma}$ the {\em \ahmplaw\ with
  parameter $\gamma$}. Each  $H_{\alpha,\gamma}$ is entirely
determined by its moment sequence
$\mu_{k}(\alpha,\gamma)=\int x^k\dint H_{\alpha,\gamma}(x)$, $k\ge 1$.
The exact expression for   $\mu_k(\alpha,\gamma)$
requires a considerable amount of additional notation: it is given in
\eqref{mainresult}. 
Roughly speaking, $\mu_k(\alpha,\gamma)$ can be decomposed into a \MP part and a heavy tail part as follows
\begin{eqnarray}\label{eq:dec}
\mu_k(\alpha,\gamma) =
\left\{\begin{array}{ll}
\beta_k(\gamma) \,, & \mbox{if } k=1,2,3, \\
\beta_k(\gamma) + d_k(\alpha,\gamma) \,, & \mbox{if } k\ge 4,
\end{array}\right. %\qquad i,j=0,\ldots,P-1\,.
\end{eqnarray}
where $d_k(\alpha,\gamma)>0$ is given in \eqref{mainresult}.
Formula \eqref{mainresult} is explicit and requires some counting that
can be implemented  using computing  software.
For small values of $k$, $d_k(\alpha,\gamma)$ can be evaluated directly.  In Section \ref{sec:k45}, we derive that
\begin{equation}\label{eq:d45}
d_4(\alpha,\gamma)=(1-\alpha/2)^2 \gamma \quad \text{and} \quad d_5(\alpha,\gamma)=(1-\alpha/2)^2 (5 \gamma + 5 \gamma^2)\,.
\end{equation}

The following theorem is the main result of the paper.

\begin{theorem}\label{thm:mainsimplified}
  Assume \eqref{Cgamma} and that $\xi$ is regularly varying with
  index $\alpha \in (0,2)$ and $\xi\eid -\xi$. Then, as $\nto$, the ESDs $F_{\bfR_n}$
  converge weakly in probability to  $H_{\alpha,\gamma}$, the $\alpha$-heavy MP law with
  parameter $\gamma$.
\end{theorem}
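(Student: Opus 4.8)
The plan is to run the method of moments. Since $H_{\alpha,\gamma}$ is by construction the probability measure with moment sequence $\{\mu_k(\alpha,\gamma)\}_{k\ge1}$, it suffices to establish: (a) for each fixed $k$, $p^{-1}\E[\tr(\bfR_n^k)]=\E\big[\int x^k\dint F_{\bfR_n}(x)\big]\to\mu_k(\alpha,\gamma)$; (b) $\Var\big(p^{-1}\tr(\bfR_n^k)\big)\to0$, which turns (a) into $\int x^k\dint F_{\bfR_n}(x)\cip\mu_k(\alpha,\gamma)$; and (c) that $\{\mu_k(\alpha,\gamma)\}_k$ satisfies Carleman's condition, hence determines a unique law. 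The moment method then delivers $F_{\bfR_n}\to H_{\alpha,\gamma}$ weakly in probability. Point (c) is immediate from the closed form in Section~\ref{sec:mainresult}: $\mu_k(\alpha,\gamma)$ is displayed there as a sum of at most $k^k$ nonnegative terms, each bounded by $C^k$, whence $\mu_k(\alpha,\gamma)\le(Ck)^k$ and $\sum_k\mu_{2k}(\alpha,\gamma)^{-1/(2k)}=\infty$.

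The substance is (a). Writing $\bfR_n=\Y\Y'$ and expanding,
\[
\tr(\bfR_n^k)=\sum_{i_1,\dots,i_k=1}^{p}\ \sum_{j_1,\dots,j_k=1}^{n}\ \prod_{\ell=1}^{k}Y_{i_\ell j_\ell}\,Y_{i_{\ell+1}j_\ell}\,,\qquad i_{k+1}:=i_1\,,
\]
I would group the summands according to the joint pattern of coincidences among $i_1,\dots,i_k$ and among $j_1,\dots,j_k$, equivalently by a connected bipartite multigraph carrying a closed walk of length $2k$. Symmetry of $\xi$ enters first: flipping the sign of a single variable $X_{vw}$ leaves the denominator $X_{v1}^2+\dots+X_{vn}^2$ unchanged, so $\E\big[\prod_w Y_{vw}^{e_w}\big]=0$ as soon as some exponent $e_w$ is odd, and only patterns in which every edge has even multiplicity survive. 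Distinct rows are independent, so the expectation factorizes over row-blocks, and for a row meeting distinct columns $w_1,\dots,w_d$ with multiplicities $2m_1,\dots,2m_d$ the factor $\E\big[Y_{vw_1}^{2m_1}\cdots Y_{vw_d}^{2m_d}\big]$ is computed via the gamma identity $(x_1+\dots+x_n)^{-s}=\Gamma(s)^{-1}\int_0^\infty t^{s-1}\e^{-t(x_1+\dots+x_n)}\dint t$ with $x_m=X_{vm}^2$ --- the mechanism behind the $\E[Y_{11}^4]$ formula recalled in Section~\ref{sec:1}. This exhibits the factor as a one-dimensional integral whose asymptotics, precisely because the stable-domain-of-attraction hypothesis forces regular variation of $\P(\xi^2>\cdot)$ (together with $\sum_w Y_{vw}^2\equiv1$, which is where $\alpha$ enters), equal $c_d(m_1,\dots,m_d)\,n^{-d}(1+o(1))$ with $c_d$ an explicit constant depending on $\alpha$. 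Consequently a pattern with $a$ row-blocks, $b$ column-blocks and $E$ distinct edges contributes an amount of order $p^{a-1}n^{b-E}\asymp n^{a+b-1-E}$ (recall $p/n\to\gamma$); connectivity forces $E\ge a+b-1$, with equality exactly when the underlying simple graph is a tree, so non-tree patterns are negligible and every tree pattern contributes $\gamma^{a-1}\prod_v c_{d_v}(\vec{m}_v)$.

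What remains --- and this is the step I expect to be the real obstacle --- is to sum these tree contributions in closed form. The tool is the path-shortening algorithm of \cite{heiny:mikosch:2017:corr}, which strips pendant ``simple'' pieces off a walk one at a time, each carrying a transparent weight, and reduces an arbitrary even tree pattern to one of finitely many irreducible cores that are enumerated explicitly. A careful reorganization of the resulting sum --- this is where set partitions, and hence Stirling numbers of the second kind, come in --- shows that the total equals $\beta_k(\gamma)+d_k(\alpha,\gamma)$. When $\alpha=2$ every constant $c_d(\vec{m})$ with $\vec{m}\neq(1,\dots,1)$ vanishes while double trees carry weight $1$, so $d_k\equiv0$ and one recovers $\sigma_{MP,\gamma}$, consistent with Jiang's theorem and with the $\alpha=2$ boundary of Theorem~\ref{thm:limit}. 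For $\alpha<2$ the extra mass $d_k(\alpha,\gamma)$ comes from configurations with an edge of multiplicity $\ge4$, i.e.\ from walks several of whose steps are forced onto a common column inside a single row; grouping such pinned steps into blocks is exactly the set-partition count and delivers $d_k$ as in \eqref{mainresult}, with $d_k=0$ for $k\le3$ because there are too few steps to form a nontrivial cluster. This reorganization is the combinatorial core of the argument and occupies Sections~\ref{sec:moments}--\ref{sec:mainresult}.

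Part (b) is obtained by the same graph analysis applied to $\E[(\tr\bfR_n^k)^2]-(\E\tr\bfR_n^k)^2$, now a sum over \emph{pairs} of closed walks: by independence of rows a pair contributes nothing unless the two walks share a row-vertex, and gluing two tree patterns at a common vertex loses a factor of order $p^{-1}$ in the count, so the variance tends to $0$. Together with (a) and (c) this establishes the theorem.
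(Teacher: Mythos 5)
Your outline follows the paper's own route essentially step for step: method of moments, the bipartite-walk expansion of $\tr(\bfR_n^k)$, symmetry to discard odd exponents, the asymptotics \eqref{moment} of the self-normalized row factors (your Gamma-integral derivation is the same mechanism as the Albrecher--Teugels formula the paper invokes), the order count $p^{a-1}n^{b-E}$ with tree skeletons dominating, the path-shortening reduction to irreducible cores organized via set partitions, and the variance bound through pairs of walks that must share a row vertex. The genuinely hard part you explicitly defer --- summing the tree contributions in closed form --- is exactly the content of Sections~\ref{sec:moments}--\ref{sec:mainresult}, so on that point your sketch is a pointer to the paper's argument rather than an alternative to it.

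The one step that does not go through as written is your verification of the Carleman condition. You claim that \eqref{mainresult} is a sum of at most $k^k$ nonnegative terms, each bounded by $C^k$. The termwise bound is false: the edge multiplicities satisfy $\sum_{(i,t)}m_{it}(\tI,T)=2|\tI|$, which can be as large as $2k$, so a single term may contain a factor such as $\Gamma\big(\tfrac{k-\alpha}{2}\big)^2$ (take $k$ even, $\tI=(1,2,1,2,\dots,1,2)$ and $T=(1,\dots,1)\in\mathcal{C}_{1,k}(\tI)$, so $m_{11}=m_{21}=k$), and this grows like $k^{k}$ up to exponential factors. Moreover the index set (admissible pairs of $I$ and $T$) is a priori of size up to $B(k)^2$, not $k^k$. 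Patching your estimate with these correct bounds only yields $\mu_k(\alpha,\gamma)\lesssim k^{3k}$, whence $\mu_{2k}(\alpha,\gamma)^{-1/(2k)}\lesssim k^{-3}$, whose sum converges --- so Carleman is not obtained by this counting. The paper avoids the termwise estimate altogether and uses the structural bound $\beta_k(\gamma)\le\mu_k(\alpha,\gamma)\le\sum_{r=1}^k\gamma^{r-1}\#\mathcal{C}_{r,k}=\sum_{r=1}^k\gamma^{r-1}B(k,r)\le(2\gamma)^{-1}(\gamma+k)^k$, which gives $\mu_{2k}(\alpha,\gamma)^{-1/(2k)}\gtrsim(\gamma+2k)^{-1}$ and hence \eqref{eq:carleman}. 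Any bound of the form $\mu_k(\alpha,\gamma)\le C^k k^k$ would do, but your route does not produce one; replace it by the paper's inequality and the rest of your plan coincides with the paper's proof.
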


%Theorem \ref{thm:mainsimplified} is a simplified version of Theorem\ref{thm:main}

The symmetry requirement on the distribution of $\xi$ is
  technical. It allows to neglect all expectations of odd powers of
  matrix entries in our moment method. Since the moment formula \eqref{moment} which is a key ingredient of the proof only depends on the distribution of $\xi^2$ (and not $\xi$), the symmetry restriction can likely be removed and Theorem \ref{thm:mainsimplified}
 also holds for non-symmetrically distributed $\xi$; see Remark~\ref{rem111} for details. \smallskip

We now give some illustrations of the theorem
  and compare the limiting  $\alpha$-heavy MP
  laws $H_{\alpha,\gamma}$ with the classical MP laws. 
% Figure 1
\begin{figure}[htb!]
  \centering
    \includegraphics[trim = 0.55in 2.5in 0.45in 2.3in, clip, scale=0.75]{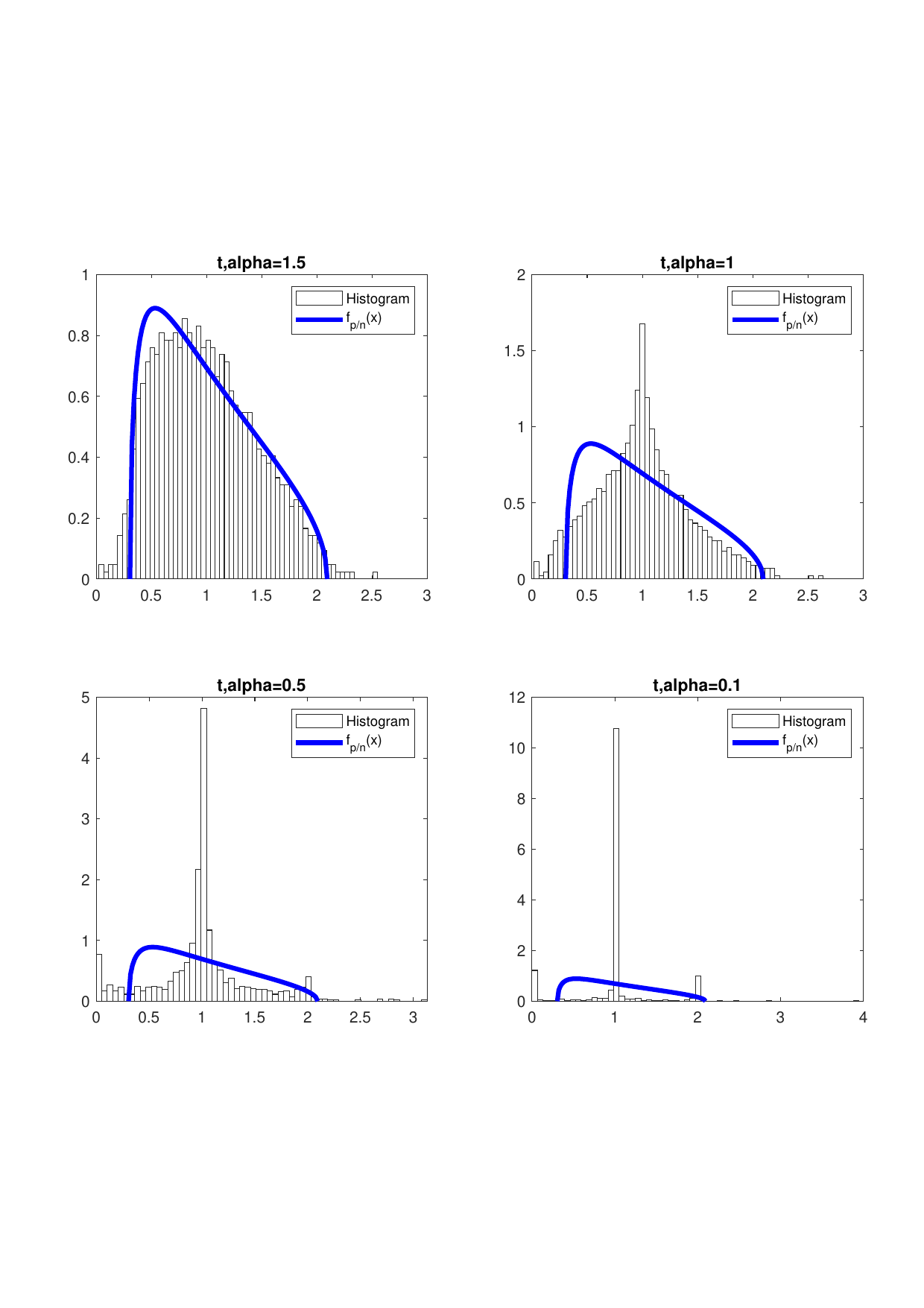}
\caption{Histograms of the \ahmplaw~with parameter $p/n$. The data are
  simulated from a $t$-distribution with different degrees of
    freedom $\alpha\in\{1.5,1,0.5,0.1\}$. The dimension is $p=1000$ and the sample size  $n=5000$.}
\label{fig:fouralphas}
\end{figure}
Figure \ref{fig:fouralphas} shows the shape of $H_{\alpha,\gamma}$ for
different values of $\alpha$, $p=1000, n=5000$ and $\gamma=p/n$. 
The entries $X_{ij}$ were drawn from a $t$-distribution with
$\alpha\in\{1.5,1,0.5,0.1\}$ degrees of freedom. We compare the
(normalized) histogram of the eigenvalues $(\lambda_i(\bfR))$ with the
\MP density $f_{p/n}(x)$. 
The parameter $\gamma=p/n=0.2$ is the same in the four plots in Figure~\ref{fig:fouralphas}
as well as the four MP densities despite their visual difference due
to different scales used in the plots.
Observe that for $\alpha=1.5$ (top left panel) the histogram resembles
$f_{p/n}$ at first sight. At closer inspection one notices that
the \ahmplaw~has a  larger support than the MP law. Moreover, more
mass is concentrated around the mean 1. These two effects become more
pronounced if the tail heaviness of $\xi$ increases, i.e. $\alpha$
decreases. The plots show that most mass is concentrated  around  1 if $\alpha$ is small. 
\begin{figure}[htb!]
  \centering
    \includegraphics[trim = 3.0in 0.5in 0.45in 0.3in, clip, scale=0.23]{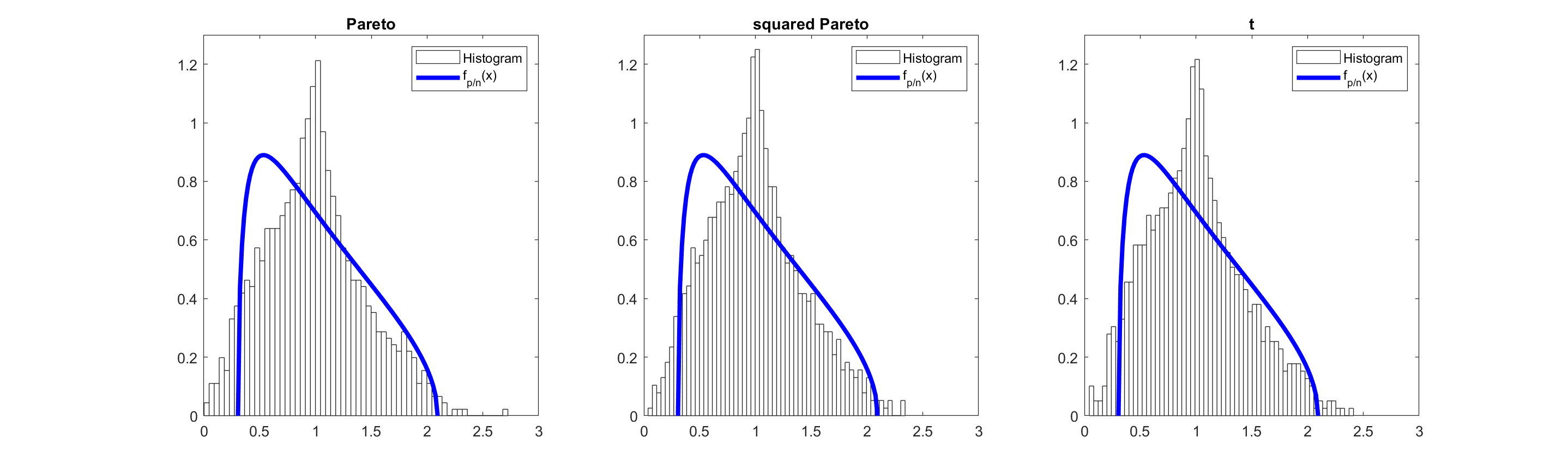}
\caption{Histograms of the \ahmplaw~with parameter $p/n$ and $\alpha=1.1$. The data are
  simulated from the following distributions: (left) $\xi\eid X-\E X$ with $X\sim \text{Pareto}(\alpha)$, (middle) $\xi\eid X^2-\E X^2$ with $X\sim \text{Pareto}(2\alpha)$, (right) $\xi$ $t$-distributed with $\alpha$ degrees of freedom. The dimension is $p=1000$ and the sample size  $n=5000$.}
\label{fig:nonsymmetric}
\end{figure}

In Figure \ref{fig:nonsymmetric} we consider the spectral distribution of $\bfR$ for non-symmetric distributions of $\xi$. The left and middle plots in Figure \ref{fig:nonsymmetric} are generated with non-symmetrically distributed $\xi$'s that are regularly varying with index $\alpha=1.1$. For the right plot, the symmetric $t$-distribution with $\alpha$ degrees of freedom was used for which Theorem~\ref{thm:mainsimplified} shows the convergence of the ESD to the \ahmplaw. We observe that that the three ESDs look almost identical, which suggests  that the symmetry assumption in Theorem~\ref{thm:mainsimplified} can indeed be relaxed. 

%Figure 3
\begin{figure}[htb!]
  \centering
    \includegraphics[trim = 0.75in 2.9in 0.75in 2.7in, clip, scale=0.75]{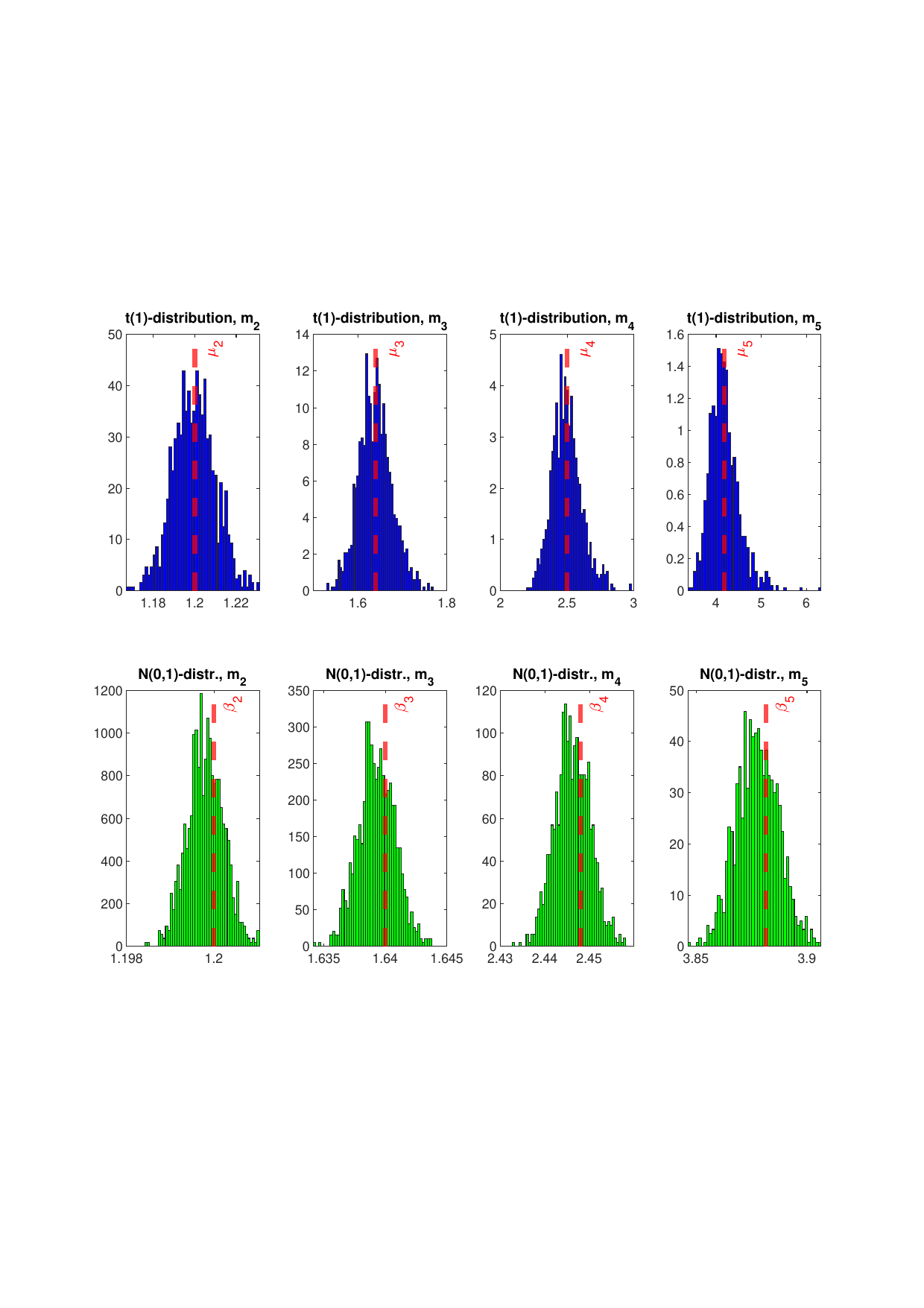}
\caption{Histograms of empirical moments of $m_2,\dots,m_5$ from
  $p\times n$ data matrices with $t(1)$-distributed entries (top row)
  and $N(0,1)$-distributed entries (bottom row); $p=1000$,
  $n=5000$ and using $L=1000$ independent replicates.}
\label{fig:moments1}
\end{figure}

Theorem~\ref{thm:mainsimplified} also yields the limits of the
empirical moments $m_k(\bfR):= p^{-1} \sum_{i=1}^p
(\lambda_i(\bfR))^k$ for $k\ge 1$. More precisely, in the course of
its proof we will show that $m_k(\bfR)$ converges in probability to
$\mu_k(\alpha,\gamma)$. Note that the case $k=1$ is trivial since
$m_k(\bfR)=1$. In Figure~\ref{fig:moments1}, we place ourselves in the
setting of the top right panel of Figure \ref{fig:fouralphas} ($\alpha=1$). That is, we pick $p=1000,n=5000$ and simulate the iid entries of $\bfX^{(j)}$ from a $t(1)$ distribution. Then we compute the eigenvalues $\lambda_1(\bfR^{(j)}), \ldots,\lambda_p(\bfR^{(j)})$. This procedure is repeated until we have $L=1000$ samples 
$$(\lambda_1(\bfR^{(j)}), \ldots,\lambda_p(\bfR^{(j)}))\,, \quad 1\le j\le L\,,$$
from which we calculate $m_k(\bfR^{(j)})$, $1\le j\le L$ for
$k\in\{2,3,4,5\}$. The first row in Figure~\ref{fig:moments1} shows
the (normalized)  histograms of $m_k(\bfR^{(j)})$, ($1\le j\le L$). By
Theorem \ref{thm:mainsimplified}, the limits (in probability)  of $m_k(\bfR^{(j)})$ are 
$$(\mu_2,\mu_3,\mu_4,\mu_5)=(1.2,1.64,2.4980,4.1816)\,,$$
where $\mu_k$ is a shorthand notation for $\mu_k(1,0.2)$. 
Vertical lines at values $\mu_k$ were added to the histograms in the
first row. The averaged empirical moments 
$$\frac{1}{L} \Big(\sum_{j=1}^L m_k(\bfR^{(j)}); k=2,\ldots, 5\Big)= (1.1996,1.6389,2.4956,4.1774),$$
are very  close to their limits $(\mu_2,\mu_3,\mu_4,\mu_5)$.

To obtain the second row in Figure~\ref{fig:moments1}, we simulated from a standard normal distribution instead of the $t(1)$ distribution. In this case the theoretical limiting moments are the \MP moments $\beta_k:=\beta_k(0.2)$, 
$$(\beta_2,\beta_3,\beta_4,\beta_5)=(1.2,1.64,2.448,3.8816)$$
and the averaged empirical moments are  $(1.1998,1.6393,2.4462,3.8776)$. It is interesting to note the different scaling on the $x$-axis when comparing the first and the second row of plots in Figure~\ref{fig:moments1}. In case of normal data, the spread is much smaller than for the heavy-tailed $t$-distribution. For $k\in\{4,5\}$ the $m_k$ fluctuate around different means since $d_k(1,0.2)>0$; see \eqref{eq:dec}.

Our next result shows that the family of
  \ahmplaw s $\{H_{\alpha,\gamma}\}$ can be continuously extended at
its boundaries $\alpha\in\{0,2\}$.

\begin{theorem}\label{thm:limit}
  \begin{enumerate}
  \item[(1)]
    The limit $\lim_{\alpha \to    0^+}H_{\alpha,\gamma}$ is a modified Poisson distribution
    with probability mass function $q_\gamma$ 
    \begin{equation}\label{eq:modpoisson}
       q_\gamma(0) = 1-\frac1\gamma +\frac1\gamma \e^{-\gamma}\qquad \text{ and } \qquad
      q_\gamma(k) =\frac{1}{\gamma}\e^{-\gamma}\frac{\gamma^{k}}{k!}\,,\qquad k\ge 1\,.
    \end{equation}
  \item[(2)] The limit 
    $\lim_{\alpha \to 2^-}H_{\alpha,\gamma}$ is the \MP law
    $\sigma_{MP,\gamma}$.
  \end{enumerate}
\end{theorem}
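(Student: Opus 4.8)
The plan is to argue purely at the level of moment sequences and invoke the Fr\'echet--Shohat (method of moments) theorem. Recall that $H_{\alpha,\gamma}$ is by construction the unique probability measure with moments $\mu_k(\alpha,\gamma)$, which by \eqref{eq:dec} equal $\beta_k(\gamma)$ for $k\le 3$ and $\beta_k(\gamma)+d_k(\alpha,\gamma)$ for $k\ge 4$, with $d_k(\alpha,\gamma)$ given explicitly in \eqref{mainresult}. The structural fact I would first extract from \eqref{mainresult} is that, for each fixed $k$, $\mu_k(\alpha,\gamma)$ is a polynomial in $1-\alpha/2$ with coefficients polynomial in $\gamma$, and that the heavy-tail part $d_k(\alpha,\gamma)$ collects precisely the terms carrying a strictly positive power of $1-\alpha/2$ (consistent with $d_4(\alpha,\gamma)=(1-\alpha/2)^2\gamma$ and $d_5(\alpha,\gamma)=(1-\alpha/2)^2(5\gamma+5\gamma^2)$). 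In particular each $\mu_k(\alpha,\gamma)$, $k$ fixed, stays bounded as $\alpha$ ranges over $(0,2)$, so the family $\{H_{\alpha,\gamma}\}_\alpha$ is tight and every weak subsequential limit, as $\alpha$ tends to an endpoint, carries the corresponding limiting moments. It then suffices to identify $\lim_\alpha \mu_k(\alpha,\gamma)$ at $\alpha=0^+$ and $\alpha=2^-$ and to check that the two limiting moment sequences are determinate.

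For part (2), $\alpha\to 2^-$: since $1-\alpha/2\to 0$, every term of $d_k(\alpha,\gamma)$ vanishes, so $\mu_k(\alpha,\gamma)\to\beta_k(\gamma)$ for all $k\ge 1$. As $\sigma_{MP,\gamma}$ is supported on the compact set $[a_\gamma,b_\gamma]$ together with a possible atom at $0$, it is moment-determinate, and Fr\'echet--Shohat yields $H_{\alpha,\gamma}\Rightarrow\sigma_{MP,\gamma}$. This step is essentially immediate once the polynomial-in-$(1-\alpha/2)$ structure of \eqref{mainresult} is available.

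For part (1), $\alpha\to 0^+$: I would first compute the moments of the candidate limit $q_\gamma$. Writing $q_\gamma=(1-\gamma^{-1})\delta_0+\gamma^{-1}\mathrm{Pois}(\gamma)$ (a combination with possibly negative coefficient when $\gamma<1$, but harmless for computing moments by linearity) and using the Dobi\'nski-type identity $\sum_{k\ge 0}k^m\gamma^k/k!=\e^{\gamma}\sum_{j=1}^{m}S(m,j)\gamma^{j}$ for $m\ge 1$, one obtains $\int x^m\,q_\gamma(\dint x)=\sum_{j=1}^{m}S(m,j)\gamma^{j-1}$, that is $\gamma^{-1}$ times the Touchard polynomial $T_m(\gamma)=\sum_{j=1}^m S(m,j)\gamma^j$. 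Moreover $q_\gamma$ has the everywhere-finite moment generating function $q_\gamma(0)+\gamma^{-1}\e^{-\gamma}(\e^{\gamma\e^{t}}-1)$, hence is moment-determinate. It remains to prove $\lim_{\alpha\to 0^+}\mu_k(\alpha,\gamma)=\sum_{j=1}^{k}S(k,j)\gamma^{j-1}$; since $1-\alpha/2\to 1$, the formula \eqref{mainresult} specializes to a pure set-partition/graph count, and the claim becomes the combinatorial identity $\beta_k(\gamma)+d_k(0,\gamma)=\sum_{j=1}^{k}S(k,j)\gamma^{j-1}$. This is checked against the known values $d_4(0,\gamma)=\gamma$ and $d_5(0,\gamma)=5\gamma+5\gamma^2$, which give $1+7\gamma+6\gamma^2+\gamma^3=T_4(\gamma)/\gamma$ and $1+15\gamma+25\gamma^2+10\gamma^3+\gamma^4=T_5(\gamma)/\gamma$, and in general amounts to showing that adjoining the heavy-tail graphs to the Narayana/\MP\ graphs of weight $\beta_k(\gamma)$ produces all set partitions of $\iv{k}$ graded by number of blocks. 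With this identity, Fr\'echet--Shohat again gives $H_{\alpha,\gamma}\Rightarrow q_\gamma$.

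The main obstacle is precisely this combinatorial identity in part (1): showing that the $\alpha\to 0$ limit of the graph count in \eqref{mainresult} is exactly the block-refined Bell number $\sum_{j}S(k,j)\gamma^{j-1}$. This is where the Stirling-number bookkeeping of Sections~\ref{sec:F(I)}--\ref{sec:mainresult} must be used: one needs to verify that as $1-\alpha/2\to 1$ the constraints distinguishing the \MP-admissible graphs from the general ones dissolve, so that the surviving objects are in bijection with pairs consisting of a set partition of $\iv{k}$ and a $\gamma$-exponent recording its number of blocks. The $\alpha\to 2$ limit, by contrast, requires only the soft observation that every heavy-tail term carries a positive power of $1-\alpha/2$, together with the moment-determinacy of the compactly supported \MP\ law.
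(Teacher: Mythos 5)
Your overall strategy is the same as the paper's: prove convergence of the moment sequences $\mu_k(\alpha,\gamma)$ at the two boundary values of $\alpha$, identify the limiting sequences, and conclude by moment determinacy (compact support for the \MP law, an entire moment generating function for the modified Poisson law); your Dobi\'nski/Touchard computation of the moments of $q_\gamma$ is exactly the content of Lemma~\ref{lem:poisson} run in the reverse direction. Your treatment of $\alpha\to 2^-$ is essentially correct, and your ``polynomial in $1-\alpha/2$'' formulation is a clean identity version of the paper's estimate \eqref{eq:sdgfsds}: writing $\Gamma(j-\alpha/2)=\Gamma(1-\alpha/2)\prod_{l=1}^{j-1}(l-\alpha/2)$ for each edge of degree $2j$, the exponent of $\Gamma(1-\alpha/2)$ in every term of \eqref{mainresult} cancels exactly because the skeleton is a tree with $(r-q)+s-1$ edges, and irreducibility of $\tI$ forces at least one edge of degree $\ge 4$, hence a surviving factor $(1-\alpha/2)$ that kills the term as $\alpha\to 2^-$. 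You should actually record this verification, though, rather than infer the structure from consistency with $d_4$ and $d_5$.

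The genuine gap is in part (1). You reduce it to the identity $\beta_k(\gamma)+d_k(0,\gamma)=\sum_{r=1}^k B(k,r)\gamma^{r-1}$ and then declare it ``the main obstacle'', suggesting a new bijection between heavy-tail graphs plus \MP graphs and set partitions is needed; as written, your proposal does not prove this step. In fact no new combinatorics is required: the identity follows by evaluating \eqref{mainresult} directly as $\alpha\to 0^+$. Since $\Gamma(1-\alpha/2)\to 1$ and the prefactor $\tfrac2\alpha(\alpha/2)^s=(\alpha/2)^{s-1}$ annihilates every term with $s\ge 2$, only $T=(1,\ldots,1)$ survives; for this $T$ each $I$-vertex $i$ has $d_i(\tI,T)=1$ and its unique edge has degree $m_{i1}=2N_i(\tI)$, so $\prod_i \Gamma(d_i)/\Gamma(N_i)\cdot\prod_i\Gamma(m_{i1}/2)=1$ and every canonical $r$-path with $S(I)\neq\emptyset$ contributes exactly $\gamma^{r-1}$ in the limit. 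Summing over $q$ gives $\gamma^{r-1}\bigl(\#\mathcal{C}_{r,k}-\#\mathcal{C}_{r,k}^0\bigr)$, and adding $\beta_k(\gamma)=\sum_r\gamma^{r-1}\#\mathcal{C}_{r,k}^0$ from \eqref{lem:lemma3.4}, together with $\#\mathcal{C}_{r,k}=B(k,r)$ (Lemma~\ref{lem:crk}) and $B(k,1)=B(k,k)=1$, $B(k,k-1)=\binom{k}{2}$ for the boundary values of $r$, yields $\lim_{\alpha\to 0^+}\mu_k(\alpha,\gamma)=\gamma^{-1}\sum_{r=1}^k\gamma^r B(k,r)$. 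So the ``constraints dissolving'' you anticipate is just this short cancellation using the partition correspondence already established in Lemma~\ref{lem:crk}; without carrying it out, part (1) of your argument is incomplete.
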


The proof of Theorem~\ref{thm:limit} is given in Section~\ref{sec:boundary}. Theorem \ref{thm:limit} shows that $H_{\alpha,\gamma}$ interpolates between the (modified) Poisson and the \MP distribution which are the boundary cases for $\alpha \to 0^+$ and  $\alpha \to 2^-$.

 Note that $ 1-\frac1\gamma +\frac1\gamma e^{-\gamma}>0$ for $\gamma>0$.
 Compared with the Poisson distribution with parameter
 $\gamma$,  
 the modified Poisson  distribution  $q_\gamma$
 has the masses at $k\ge 1$ scaled by the factor $1/\gamma$, a
 magnification when $\gamma<1$ and a shrinkage otherwise. 
 It has mean $1$ which is very natural. Indeed, the
 $H_{\alpha,\gamma}$ distributions all have mean $1$.
 In particular when $\gamma\to 0$,
 $q_\gamma$ degenerates to the Dirac mass at 1.

Figure \ref{fig:modpoisson} shows normalized histograms of the
spectrum of $\bfR$ for various values of $p,n$ and $\alpha=0.05$. The
plots nicely illustrate the convergence to the modified Poisson
distribution. In the top left  panel, the bars at 0 and 1 are of about
the same height. This is in perfect agreement with the point masses of
the Poisson$(1)$ distribution at 0 and 1 which both are $\e^{-1}$. We also see
that the smaller the ratio  $p/n$, higher the concentration of the
eigenvalues around the mean~1. 

%$$\beta(0.2)=(1.2,1.64,2.448,3.8816)$$
%$$\overline t =(1.1996,1.6389,2.4956,4.1774)$$
%$$\overline N =(1.1998,1.6393,2.4462,3.8776)$$
%  Figure 3
\begin{figure}[htb!]
  \centering
    \includegraphics[trim = 0.75in 2.6in 0.65in 2.5in, clip, scale=0.75]{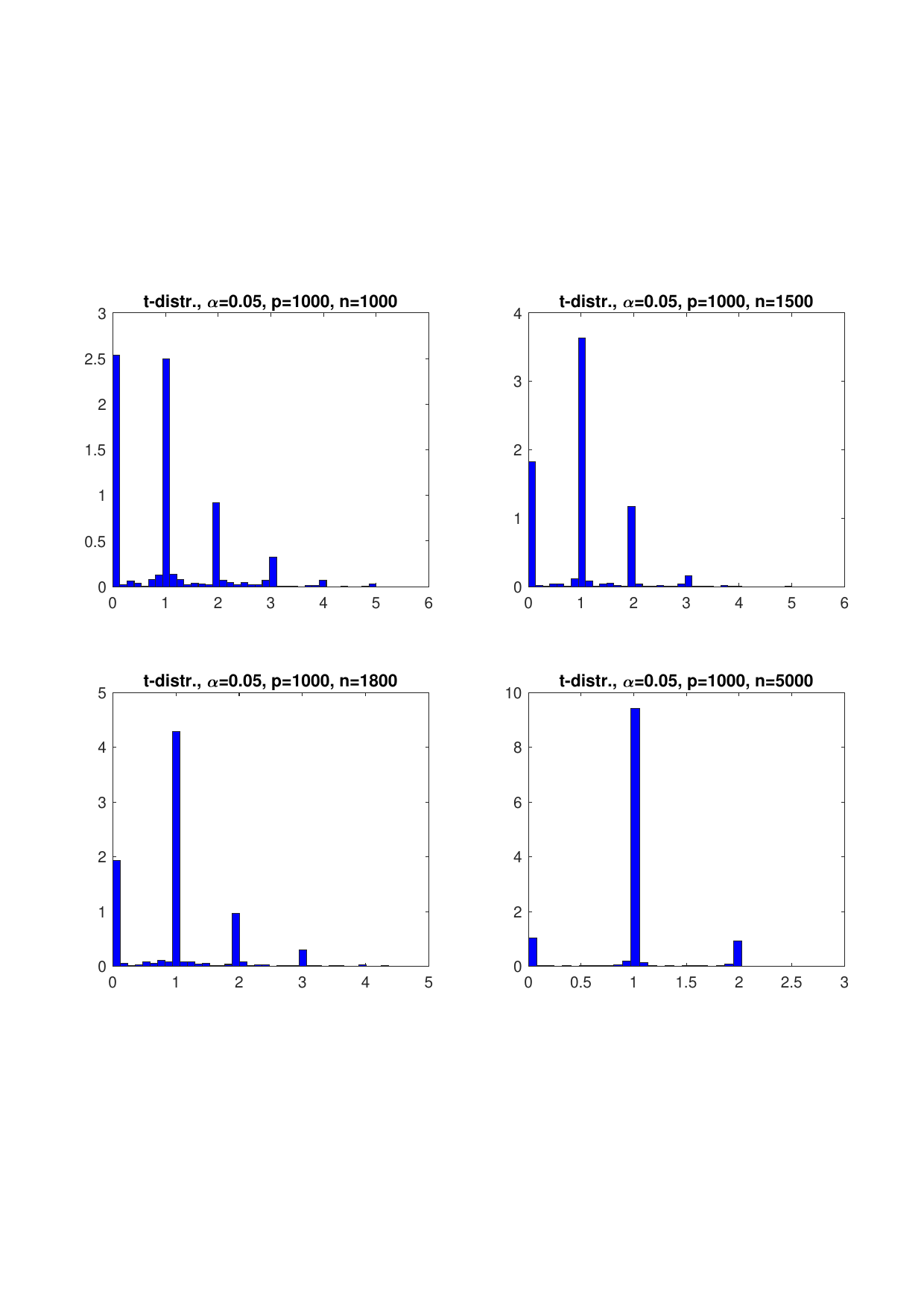}
\caption{Histograms of the \ahmplaw~with parameter $p/n$. The data are
  simulated from a $t$-distribution with $\alpha=0.05$ degree of
  freedom and the ratio $\gamma = p/n$ varying from $1$ to
    $2/3$, $5/9$ and $1/5$, illustrating
    different  limiting modified Poisson distributions $q_\gamma$.}
  \label{fig:modpoisson}
\end{figure}

  \begin{remark}
    Simulation results in Figure~\ref{fig:modpoisson} suggest
    that the
    $\alpha$-heavy MP law might be a mixture of an absolutely
    continuous component and a discrete distribution supported on the
    integers when $0<\alpha<2$.
    It would be interesting  to confirm this point
    rigourously.  However, a study on the support of limiting distributions
    is generally possible if we can characterize the distribution through its
    Stieltjes transform, see \cite{arous:guionnet:2008} and
    \cite{belinschi:dembo:guionnet:2009} for the case of Wigner matrix
    and sample covariance matrix with heavy-tailed entries.
    As this paper is based on the moment method, it is relevant to develop a parallel study using the resolvent method in the future for answering further questions about characteristics of the \ahmplaw.
\end{remark}

\subsection{A statistical application}\label{ssec:estimate}

In this section, we develop a simple application of our general result
to the problem of estimation of the tail index parameter $\alpha$.
As discussed earlier,
the empirical moments $m_k(\bfR):= p^{-1} \sum_{i=1}^p
(\lambda_i(\bfR))^k$ for $k\ge 1$
converge in probability to the corresponding  moments  
$\mu_k(\alpha,\gamma)$ of the limiting $\alpha$-heavy MP law.
Note that  for $k\ge  4$, $\mu_k(\alpha,\gamma)$ is a bijective function 
of $\alpha$ for any fixed ratio parameter $\gamma$.
Let $\gamma_n=p/n$ be the actual dimension ratio. Define 
$\hat\alpha_n$, 
the method of moment estimator
of $\alpha$, as  the solution of  
\begin{equation}\label{eq:alphahat}
  \mu_k(\alpha, \gamma_n) = m_k(\bfR).
\end{equation}
The previous discussion readily yields the consistency of this moment
estimator. 

\begin{proposition}\label{prop:consistency}
Under the conditions of Theorem~\ref{thm:mainsimplified}, for
each $k\ge 4$, the moment estimator  $\hat\alpha_n$ converges in
probability to $\alpha$ as $n,p\to\infty$.
\end{proposition}

For illustration purpose, we consider hereafter the case of $k=5$ with
$\mu_5(\alpha,\gamma)$ as in \eqref{eq:dec} and \eqref{eq:d45}.
Then the estimator is explicitly given by
\[     \hat\alpha_n= 2\left[
  1 - \left(  \frac{ | m_5(\bfR)- \beta_5(\gamma_n)|}{  5\gamma_n(1+\gamma_n)}\right)^{\frac12}\right],
\]
where $\beta_5(\gamma)$ is the fifth moment of the MP law (see \eqref{eq:momentsmp}).
%\[ \beta_5(\gamma) =  1+10\gamma+20\gamma^2+10\gamma^3+\gamma^4. \]

A small simulation experiment is  conducted to check the finite-sample
performance of the estimator. The experimental design is as follows.
\begin{itemize}
\item Independent entries are simulated with Student $t(\alpha)$
  distributions with $\alpha\in\{0.5,1,1.5\}$;
\item The dimension ratio is fixed to $n=2p$ with 
  $p$ varying  from 100 to 800 (thus $n$ runs from 200 to 1600);
\item  For each combination, the estimator $\hat\alpha_n$ is averaged
  over 1000 independent replications.
\end{itemize}

These estimates are reported  in  Table~\ref{tab:estimates} and
plotted in Figure~\ref{fig:estimates}. One can observe the consistency
of the estimator. However, it is also noticed that this convergence is
in general slow which is indeed expected for heavy tailed entries.
\begin{table}[hp!]
  \caption{Empirical averages of the
    estimator $\hat\alpha$ for various dimension combinations.
    Entries with 
    independent $t(\alpha)$ variables with true
    $\alpha\in\{0.5,1,1.5\}$.
    The     dimension 
    $p$ varies  from 100 to
    800 and $n=2p$.  Each averaged estimate 
    of $\hat\alpha$ uses 1000 replications.\label{tab:estimates}}
  \centering
  \begin{tabular}{c|rrrrrrrr}
  $p$ &   100&      200&         300&      400&      500&        600&    700&       800\\  \hline
  $\alpha=0.5$ &  0.227  & 0.448 & 0.482 & 0.537 & 0.597 & 0.563 & 0.579 & 0.601\\
  $\alpha=1.0$   &  0.577 & 0.773 & 0.896 &  0.901&  0.931& 0.955& 1.011&    1.006\\ 
  $\alpha=1.5$ &  0.943 & 1.103 & 1.203 & 1.237 & 1.297 & 1.326 & 1.328 & 1.360\\
    \hline
  \end{tabular}
\end{table}

\begin{figure}[tbhp!]
  \centering
  \includegraphics[width=0.3\linewidth]{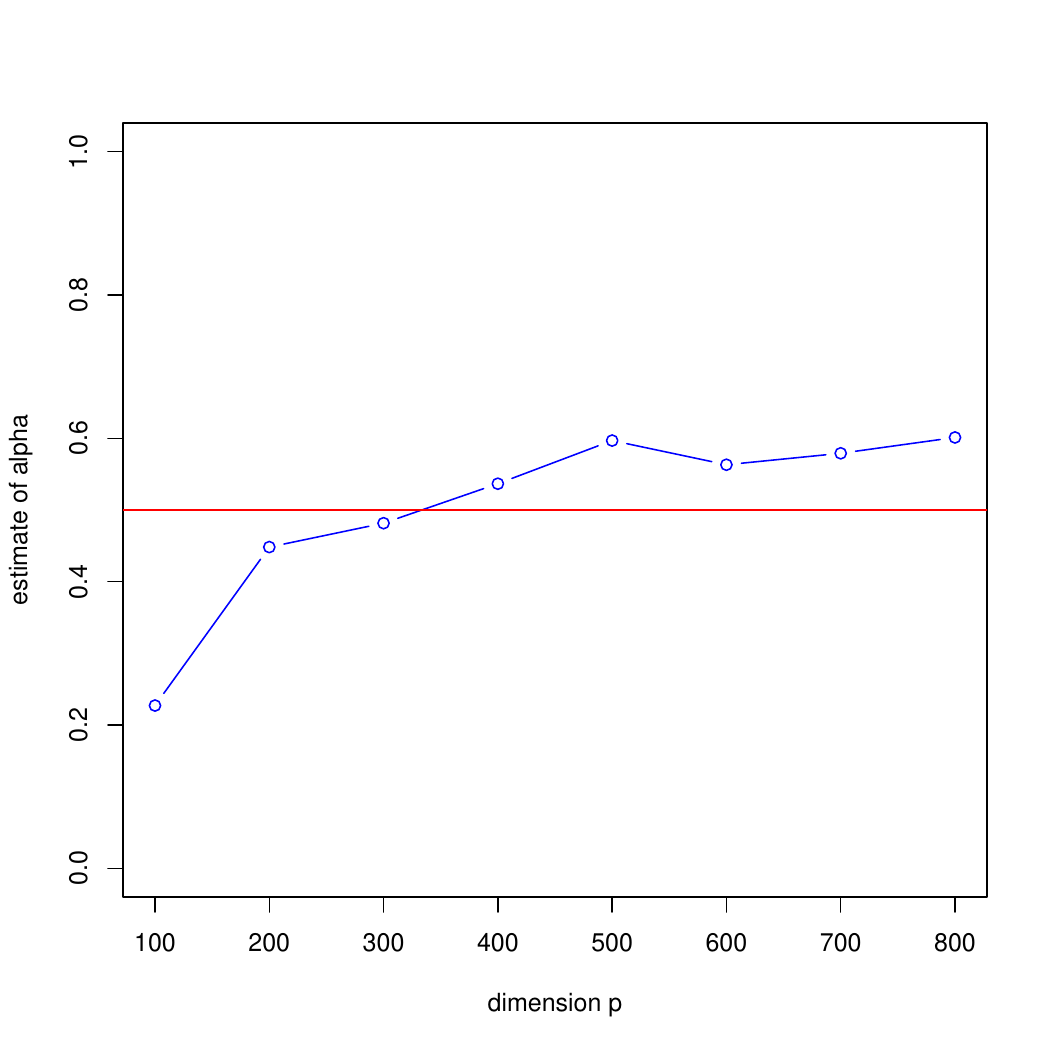}
  ~
  \includegraphics[width=0.3\linewidth]{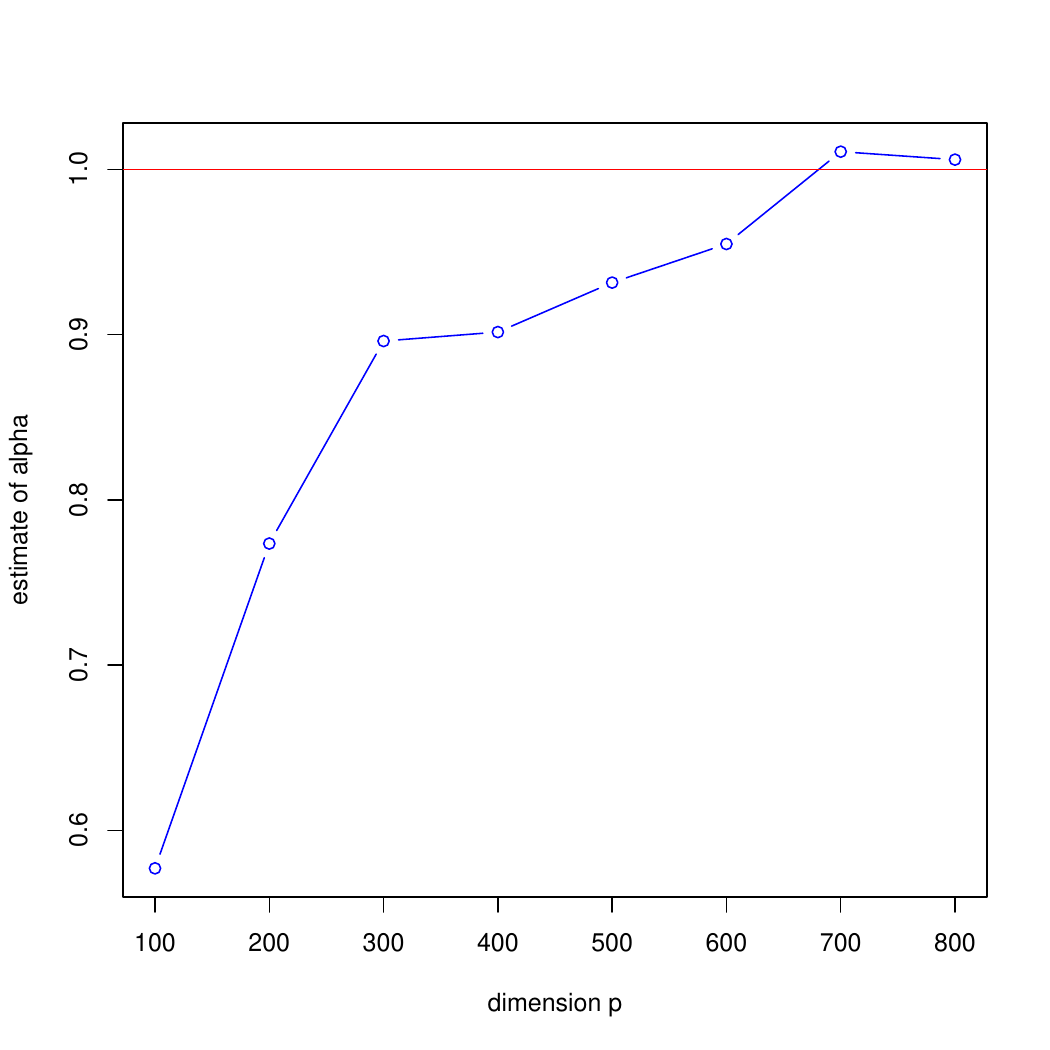}
  ~
  \includegraphics[width=0.3\linewidth]{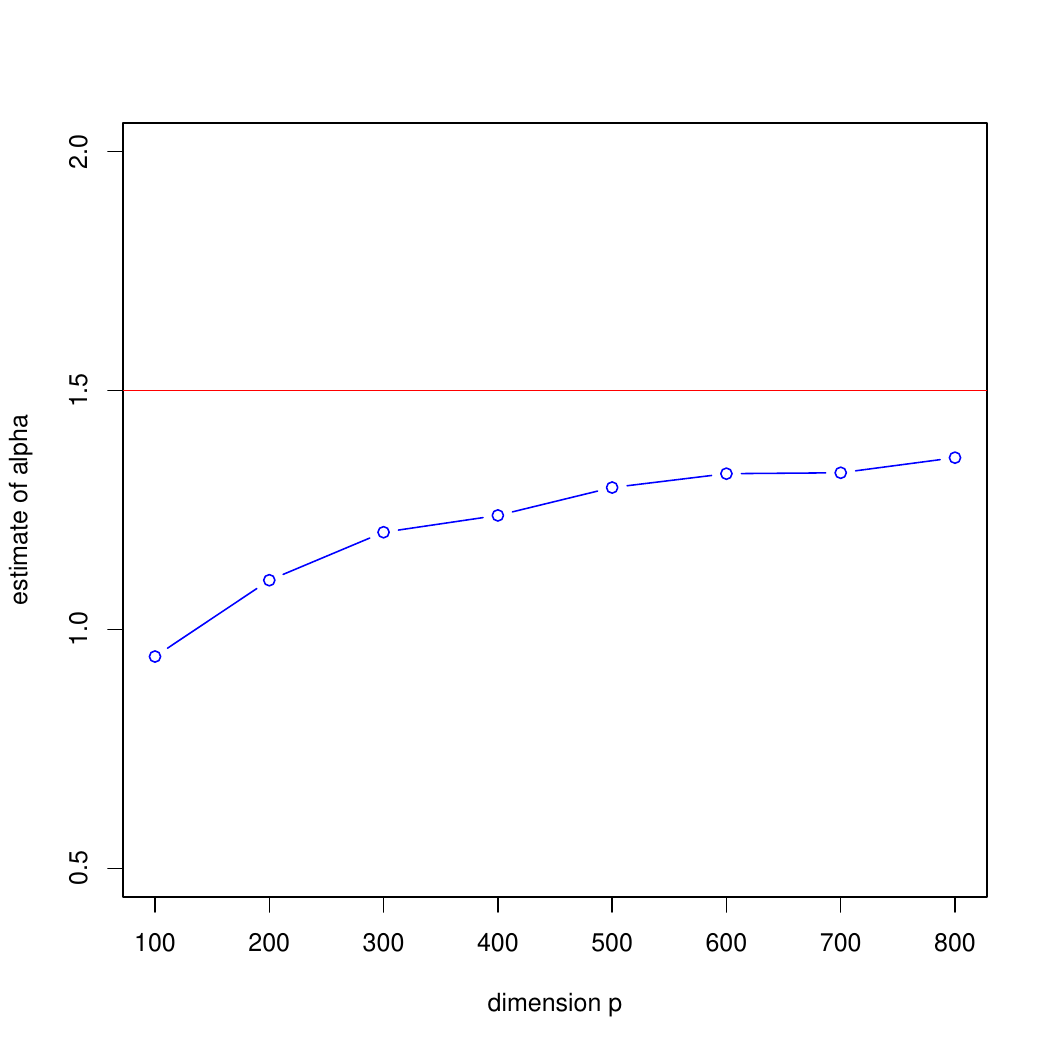}
  
  \caption{
    Evolution of
    the estimator $\hat\alpha$ when dimensions grow.  Entries $X_{ij}$ are 
    independent $t(\alpha)$ variables with
    true $\alpha=0.5$ (left
    panel),  $\alpha=1$ (middle  panel)  and $\alpha=1.5$ (right
    panel). The dimension 
    $p$ varies  from 100 to
    800 and $n=2p$.  Each averaged estimate 
    of $\hat\alpha$ uses 1000 replications.\label{fig:estimates}
  }
\end{figure}

\section{Method of moments for $\bfR$}\label{sec:moments}%\label{sec:2}

In the following sections, we are interested in the $k$-th moment of the ESD $F_{\bfR}$ of the sample correlation matrix $\bfR$ given by
\begin{equation}
  \label{eq:mk}
  m_k = \int x^k d F_\bfR (x) = \frac{ \tr \bfR^k}{p} =\frac1p
  \sum_{i_1,\ldots,i_k=1}^p  \sum_{t_1,\ldots,t_k=1}^n
  Y_{i_1t_1}  Y_{i_2t_1} Y_{i_2t_2}  %Y_{i_3t_2} 
	\cdots  Y_{i_kt_k}    Y_{i_{k+1}t_k}.
\end{equation}
(Here the convention $i_{k+1}=i_1$ is used.) Throughout $(X_{it})$ are iid symmetric, which implies that the $Y_{it}$ are symmetric as well. 

So far the method of moments which is one of the main techniques in
random matrix theory has not  been applied to sample correlation
matrices (up to our best knowledge).

  The  reason might be  that the normalization in the variables
  $\{Y_{it}\}$ implies that $Y_{it}$ and $Y_{it'}$ are dependent for
  $t\ne t'$ while the similar quantities are independent in the case
  of a sample covariance matrix.  This difference makes the moment
  calculation more involved as seen  in the subsequent developments in
  this section.

\subsection*{Outline of the proof of Theorem~\ref{thm:mainsimplified}}

In order to compute the limit of $\E[m_k]$, we perform a series of simplifying steps that innovatively use the inherent structure of self-normalized random variables and combine them with a moment formula derived from \cite{albrecher:teugels:2007} for the product of the $Y$'s on the \rhs~of \eqref{eq:mk}. In Section~\ref{sec:momentsofR}, we define the function $$F(I)=\sum_{t_1,\ldots,t_k=1}^n
 \E\big[ Y_{i_1t_1}  Y_{i_2t_1}  \cdots  Y_{i_kt_k}    Y_{i_{k+1}t_k} \big]$$ for a so-called path $I=(i_1,\ldots,i_k)$ and study properties of $F(I)$. We employ two reduction steps which mainly use the fact that $Y_{11}^2+\cdots+Y_{1n}^2=1$ to simplify $F(I)$. More precisely, we transform the path $I$ to a shorter path $\wt I$ in such a way that $F(I)$ can be obtained from $F(\wt I)$. Depending on their reducibility, we distinguish between three classes of paths: completely reducible paths,
irreducible paths and partially reducible paths. It is noteworthy that the completely reducible paths eventually yield the $k$-th \MP moment $\beta_k(\gamma)$. Our path-shortening procedure is applicable to sample correlation matrices as long as the distribution of $\xi$ is symmetric. Under finite variance, one may deduce from Remark~\ref{rem:3.2} that $\E[m_k]$ tends to $\beta_k(\gamma)$ since only the completely reducible paths contribute to the limit. The main challenge in the infinite variance case, where $\xi$ is regularly varying with index $\alpha\in (0,2)$, lies in the fact that all paths $I$ have a non-negligible contribution (see \eqref{eq:see}). By \eqref{moment} and the path-shortening procedure, the contribution of $F(I)$ to the sum in \eqref{eq:mk} depends only on the shortened path $\wt I$ and $\alpha$. Therefore, we need some rather involved combinatorics to accurately count how many $I$'s reduce to the same $\wt I$ under the path-shortening procedure. To this end, the first part of Section~\ref{sec:F(I)} establishes the combinatorics on associated graph counting by using set partitions. The remainder of Section \ref{sec:F(I)} is devoted to the calculation of $F(I)$, ultimately culminating in Proposition \ref{prop:calcF} which is the crucial auxiliary result in the proof of Theorem~\ref{thm:mainsimplified}. All our path-shortening steps in Section \ref{sec:moments} and combinatorics in Section \ref{sec:F(I)} are illustrated by additional examples for the reader's convenience. The proof of Theorem~\ref{thm:mainsimplified} is then completed in Section \ref{sec:mainresult}.

\subsection*{Self-normalized moments}
To compute the expectation of $m_k$, we need to understand the even
moments of products of self-normalized $Y_{ij}$'s. Assuming that
$\xi$ is regularly varying
with index $\alpha<2$, Albrecher and Teugels
\cite[p.~4]{albrecher:teugels:2007} derived the following formula for
the moments of the self-normalized random variables
\begin{equation}\label{moment}
\binom{n}{r} \E[Y_{11}^{2k_1} Y_{12}^{2k_2}\cdots Y_{1r}^{2k_r}] \sim \frac{\Big(\frac{\alpha}{2}\Big)^{r-1} \prod_{j=1}^r \Gamma(k_j-\alpha/2)}{r \Big(\Gamma(1-\alpha/2)\Big)^r \Gamma(k)}\,,\qquad \nto,
\end{equation}
where $k\ge 1$, $1\le r\le k$,  $k_i\ge 1$ and $k_1+\cdots+k_r=k$; and $\Gamma(\cdot)$ denoting the gamma function. In particular, we have
\begin{equation}\label{highestmoment}
n \E[Y_{11}^{2k} ] \sim \frac{\Gamma(k-\alpha/2)}{\Gamma(1-\alpha/2) \Gamma(k)}\,,\qquad \nto.
\end{equation}

\begin{remark}
To be precise, formula \eqref{moment} was stated in \cite[p.~4]{albrecher:teugels:2007} for $k\ge 2$, $r\ge 1$,  $k_i\ge 2$, $k_i$ even, and $k_1+\cdots+k_r=k$. The same proof shows that our more general formulation is valid.
\end{remark}

 It is interesting to compare the values in \eqref{eq:limita<2}
  and \eqref{moment}
  with their counterparts from a Gaussian random variable $\xi\sim
  N(0,1)$. In this case the vector $(Y_{11},\ldots,Y_{1n})$  has the Haar
  distribution on the  unit sphere ${\mathcal{S}}_{n-1}$.  It is well-known
  that  $\E[Y_{11}^4] = 3/(n(n+2))$;
  thus $\lim_{\nto} n\,\E[Y_{11}^4] =0$.
  Moreover, by \cite[Example 2.1]{heiny:mikosch:2017:corr} we have
  \[  \E[Y_{11}^{2k_1} Y_{12}^{2k_2}\cdots Y_{1r}^{2k_r}]  = \frac{\Gamma(n/2) }{2^{k_1+\cdots+k_r} \Gamma(n/2+k_1+\cdots+k_r)} \prod_{j=1}^r (2k_j-1)!!\,.
  \]
Note that $\E[Y_{11}^{2k_1} Y_{12}^{2k_2}\cdots Y_{1r}^{2k_r}]$ is of
order $n^{-(k_1+\cdots+k_r)}$. Unless all $k_i$'s are $1$, this is
much smaller than what we obtained in \eqref{moment} for
$\xi$ regularly varying
 with index $\alpha<2$, where the same expectation was of order $n^{-r}$.

\subsection{Empirical spectral moments of $\bfR$}\label{sec:momentsofR}

In this subsection, we will revisit the path-shortening algorithm developed
in \cite{heiny:mikosch:2017:corr}.
Some terminology from graph theory and notation  is useful. The set of
the first $m$ positive integers is denoted by $\iv{m}$.
A tuple $I=(i_1, i_2, \ldots, i_k)\in {\iv{p}}^k$ of positive integers  is a {\em path} with
vertices $i_\ell \in \iv{p}$.  Its length is $k=|I|$.  
The set of distinct elements in $I$ is denoted by $\{I\}$. For any set $A$, we denote its cardinality by $\# A$. If $r=\#\{I\}$,
$I$ is called an $r$-path.  For example, $I=(1,1,2,2)$ has length 4; it
is a 2-path since $\{I\}=\{1,2\}$. A path is {\em canonical} if $i_1=1$ and $i_l \le \max \{i_1, \ldots,
i_{l-1}\} +1$, $l\ge 2$.  A canonical $r$-path $I$ satisfies $\{I\} =\iv{r}$.

Two paths are {\em isomorphic} if one becomes the other by a suitable
permutation on $\iv{p}$. For example,  $(9,6,9,6)$ and
$(1,2,1,2)$ are isomorphic, but only the latter is canonical. Each
{\em isomorphism class} contains exactly one canonical path.
Given a canonical path with vertices $\iv{r}$ (a $r$-path), its
isomorphic class of paths in ${\iv{p}}^k$ has exactly
$p(p-1)\cdots(p-r+1)$ distinct elements: this corresponds to the
number of injective maps from $\iv{p}$ to $\iv{r}$. Let   $\mathcal{J}_{r,k}(p)$ denote the set of all $r$-paths $I\in \iv{p}^k$.
We then have the disjoint union
\begin{equation}\label{eq:disjointkds}
\iv{p}^k = \bigcup_{r=1}^k \mathcal{J}_{r,k}(p)\,.
\end{equation}
For the reason just explained, it holds that
\[ \#\mathcal{J}_{r,k}(p) = p(p-1)\cdots (p-r+1) \# \mathcal{C}_{r,k}~,
\]
where
\begin{equation}\label{crk}
  \mathcal{C}_{r,k}=\{ \text{canonical $r$-paths of length $k$} \},\, \qquad 1\le r\le k\,.
\end{equation}
For more details and examples of these path notions consult Section 2.1.2 in \cite{bai:silverstein:2010}.

Each summand in \eqref{eq:mk} corresponds to a path 
$I=(i_1, i_2, \ldots, i_k)$ with vertices in $ \iv{p}$ 
and a path
$T=(t_1, i_2, \ldots, t_k)$ with vertices in $ \iv{n}$. 
Let 
\begin{eqnarray}
  \label{eq:fI}
  f(I,T) &=& f_n(I,T) = \E\left[
  Y_{i_1t_1}  Y_{i_2t_1} Y_{i_2t_2}  Y_{i_3t_2} Y_{i_3t_3} \cdots  Y_{i_kt_k}    Y_{i_{k+1}t_k}
             \right],  \\
  \label{eq:fctF}
  F(I) & =  &  F_n(I)= \sum_{T \in \iv{n}^{|I|}} f(I,T)\,,
\end{eqnarray}
where the dependence on $n$ is suppressed in our notation. Note that on the \rhs~of \eqref{eq:fctF} we have $|I|=k$; but we prefer to write $|I|$ to indicate that $F$ can be applied to paths of any length. By convention, $\emptyset$ denotes the empty path and we set
$F(\emptyset)=n$.

By \eqref{eq:mk}, we have then
\[  \E[m_k] =  \frac1p \sum_{I \in {\iv{p}}^k} \sum_{T \in \iv{n}^k} f(I,T)= \frac1p \sum_{I \in {\iv{p}}^k} F(I)\, .
\] 
We rewrite $\E[m_k]$ by sorting according to the number of distinct
components in the path $I$.
Note that $F(I_1)=F(I_2)$ if $I_1$ and $I_2$ are isomorphic. In view of \eqref{eq:disjointkds}, we see that
\begin{equation}\label{eq:okljjj}
\begin{split}
\E[m_k]&= p^{-1}\sum_{r=1}^k \sum_{I\in \mathcal{J}_{r,k}(p)} F(I)
= p^{-1}\sum_{r=1}^k p(p-1)\cdots (p-r+1)  \sum_{I\in \mathcal{C}_{r,k}} F(I)\\
&\sim \sum_{r=1}^k  \sum_{I\in \mathcal{C}_{r,k}} p^{r-1} F(I)\,, \qquad \nto\,.
\end{split}
\end{equation}
Therefore, the main task is 
to determine the function $F(I)$ for $I\in \mathcal{C}_{r,k}$.
Assume that the symmetrically distributed
 $\xi$ is regularly varying  with index $\alpha<2$.
Using the moment formula \eqref{moment} it is easy to see that every $I\in \mathcal{C}_{r,k}$ has a non-negligible contribution to the limit of $\E[m_k]$. Indeed, since $p$ and $n$ are proportional, we have for $I\in \mathcal{C}_{r,k}$,
\begin{equation}\label{eq:see}
p^{r-1} F(I) = \sum_{T \in \iv{n}^{k}} p^{r-1} f(I,T) 
\ge n p^{r-1} f(I, (1,\ldots, 1))=
n p^{r-1} \prod_{i=1}^r \E\Big[   Y_{i1}^{2N_i} \Big]=O(1)\,,
\end{equation}
where $N_i\ge 1$ counts the number of occurrences of the integer $i$ in $I$; compare also \eqref{eq:fI2} later on. Different paths $I_1,I_2\in \mathcal{C}_{r,k}$ will in general lead to different limits of $p^{r-1} F(I_1)$ and $p^{r-1} F(I_2)$.

\begin{remark}\label{rem:3.2}
If the symmetrically distributed $\xi$ is more light-tailed in the sense
that $n\E[Y_{11}^4] \to 0$, then the values of $F$ can be calculated
more easily. It follows from \cite{heiny:mikosch:2017:corr} or our
path-shortening arguments in Section~\ref{sec:ps} that
\begin{eqnarray}\label{eq:newD}
 F(I)=
\left\{\begin{array}{ll}
n^{1-r} \,, & \mbox{if } I\in \mathcal{C}_{r,k}^0, \\
n^{1-r} O(n\E[Y_{11}^4])  \,, & \mbox{otherwise},
\end{array}\right. \qquad I\in \mathcal{C}_{r,k}\,.
\end{eqnarray}\noindent
Moreover, the cardinality of $\mathcal{C}_{r,k}^0$ (defined in \eqref{eq:C0klj}) is well known (see \eqref{lem:lemma3.4}) which immediately yields that $\lim_{\nto} \E[m_k]=\beta_k(\gamma)$.
\end{remark}

In what follows, we will present several simplifications that can be applied in the calculation of $F(I)$.
\begin{itemize}
\item In Section~\ref{sec:ps}, we introduce a function $S$ which transforms a path $I$ into a certain path $S(I)$ of shorter length. A simple relation between $F(I)$ and $F(S(I))$ is derived. 
\item In Section \ref{sec:application}, this relation is applied to simplify the computation of $\E[m_k]$.
\end{itemize}

\subsection{Preliminary reduction by path-shortening}\label{sec:ps}

In this paper we  will heavily consider a class of so-called $\Delta$-graphs defined
as follows \cite[Section 2.1.2]{bai:silverstein:2010}. Let $(I,T)$ be  a pair of paths of length $k$ with vertices in $\iv{p}$
and $\iv{n}$, respectively. Plot the $i_\ell$ vertices and $t_\ell$
vertices on two parallel lines.   For each $1\le \ell\le k$, draw a
down-edge from $i_\ell$ to $t_\ell$, and an up-edge from $t_\ell$ to
$i_{\ell+1}$. This is the $\Delta$-graph associated to the pair
$(I,T)$, denoted as $\Delta(I,T)$. An example of such a graph with $k=3$
is given in Figure~\ref{fig:delta}.

\begin{figure}[htp]
  \centering
  \includegraphics[width=0.6\linewidth]{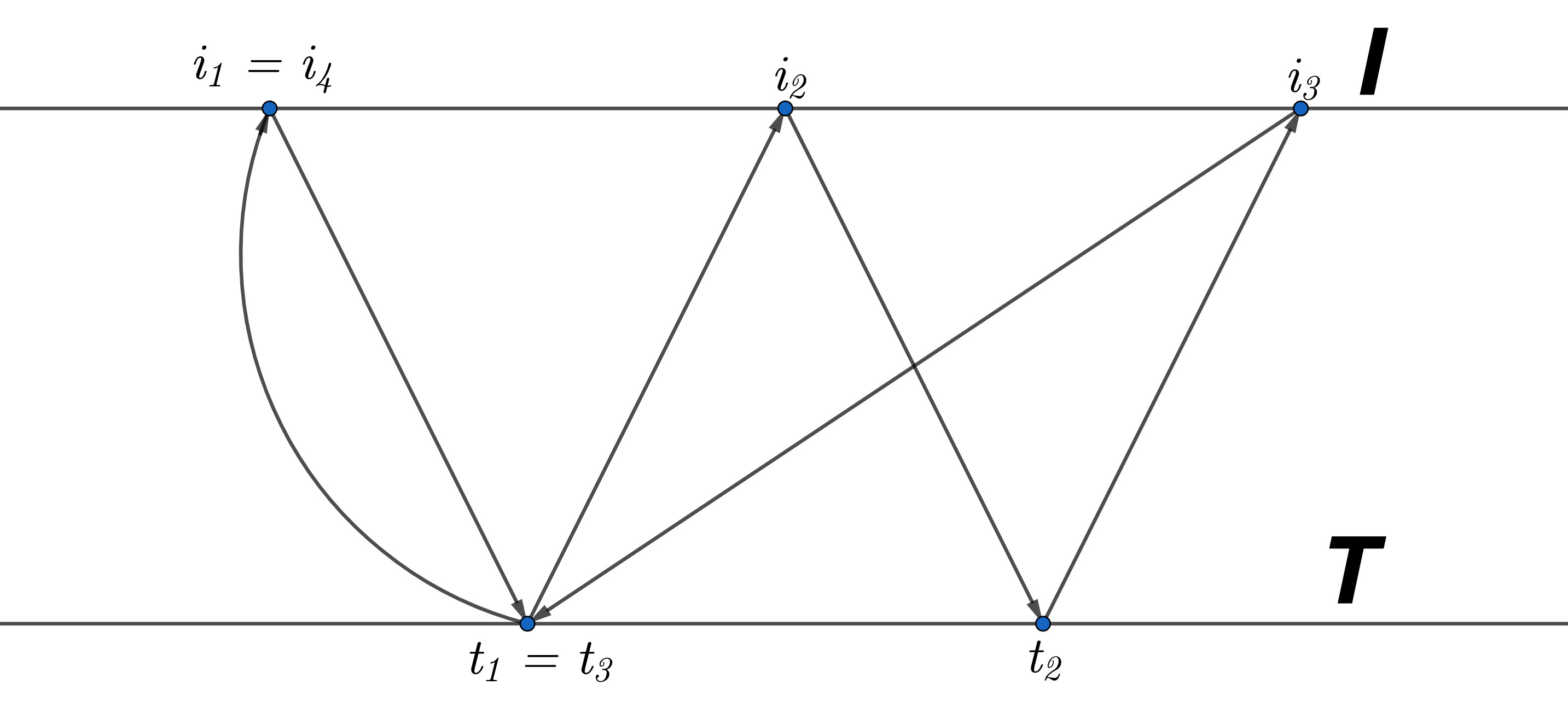}
  \caption{Graph $\Delta(I,T)$  with $I=(i_1,i_2,i_3)$  and $T=(t_1,t_2,t_1)$.
    \label{fig:delta}}
\end{figure}

Furthermore, we remove the orientation of edges: an edge $e=(i,t)\in
\Delta(I,T)$ means either a down-edge $i{\small\searrow} t$ or an up-edge
$t{\small \nearrow} i$.
Recall that the rows of $\Y=(Y_{it})$ are independent.
Using  $\Delta$-graphs, the  product in \eqref{eq:fI} can be expressed as
\begin{equation}
  \label{eq:fI2}
  f(I,T) = \E \left[ \prod_{e=(i,t) \in \Delta(I,T)}  Y_{it}  \right]
   =  \prod_{i\in \{I\}} \E\Big[ \prod_{t\in \{T\} : e=(i,t) \in \Delta(I,T)}  Y_{it}^{m_{it}(I,T)} \Big] \,,
 \end{equation}
 where $m_{it}(I,T)$ is the degree of the edge $(i,t)$ in the
 graph $\Delta(I,T)$.

For future considerations we define  the so-called skeleton
$\Delta^0(I,T)$ of the graph $\Delta(I,T)$, which  is constructed from
$\Delta(I,T)$ by setting all degrees $m_{it}$ equal to $1$.  In
other words, all multiple edges in $\Delta(I,T)$ are glued
together. By construction, $\Delta^0(I,T)$ is a connected graph with
the same vertices as $\Delta(I,T)$. Finally, we write $N_e(I,T)$ for
the number of edges of $\Delta^0(I,T)$.

The matrix $\Y=(Y_{it})$ possesses the following  properties:
\begin{enumerate}
\item[(1)] By symmetry of the entry distribution we have for $s\le n$ that $\E [Y_{i1}^{m_1}\cdots Y_{is}^{m_s}]=0$ if at least one exponent $m_j\in\N$ is odd.
\item[(2)] $\Y$ has independent rows.
\item[(3)] By definition, $\sum_{t=1}^n Y^2_{it}=1$ for each row $i$.  
\end{enumerate}

Assume that in a  $\Delta(I,T)$ graph, there is an edge $(i,t)$ with
odd degree, say $2s_{it}+1$.
By \eqref{eq:fI2} and property (1), $f(I,T)=0$ and this
graph will not contribute to $F(I)$.
Therefore, in the remaining discussions, we may assume that all
degrees $m_{it}=2s_{it}$ are even.
It follows by \eqref{moment} that 
\begin{equation}\label{eq:orderfit}
  f(I,T) \sim  \theta(I,T) n^{-N_e(I,T)} \,, \qquad \nto\, ,
\end{equation}
for some positive constant $\theta(I,T)$.
Therefore, $f(I,T)>0$ will be of highest order
if $N_e(I,T)$ is minimal.

The other two properties (2) and (3) will allow useful simplifications in the
calculations of $F(I)$.  Typically, a path $I$ will be reduced to a
shorter path $S(I)$.

\medskip\noindent{\bf Type-I reduction: elimination of runs.}\quad
We say that a
{\em run} is formed in $I$ when two consecutive vertices are equal,
that is $i_\ell=i_{\ell+1}$ for some $\ell \in \iv{k}$. For example,
both $I=(1,1,2,2)$ and $I'=(1,1,1,2)$ contain two runs.
Such a run corresponds to a product of the form
\begin{eqnarray*}
  &&  Y_{i_1t_1}  Y_{i_2t_1} \cdots Y_{i_{\ell-1} t_{\ell-1}}   Y_{i_{\ell}t_{\ell-1}}\cdot
     Y_{i_\ell t_\ell}  Y_{i_{\ell+1}t_\ell} \cdot 
     Y_{i_{\ell+1}t_{\ell+1}} Y_{i_{\ell+2}t_{\ell+1}}  \cdots Y_{i_kt_k}    Y_{i_{k+1}t_k}
  \\
  &&  = Y_{i_1t_1}  Y_{i_2t_1} \cdots Y_{i_{\ell-1} t_{\ell-1}}   Y_{i_{\ell}t_{\ell-1}} \cdot
     \boxed { ~Y_{i_\ell t_\ell}^2 ~}   \cdot 
     Y_{i_{\ell+1}t_{\ell+1}} Y_{i_{\ell+2}t_{\ell+1}}  \cdots Y_{i_kt_k}    Y_{i_{k+1}t_k}
\end{eqnarray*}
Therefore, we can isolate the sum over $t_\ell$ of the squares in the
box, and as 
\[  \sum_{t_\ell=1}^n Y_{i_\ell t_\ell}^2=1, 
\]
we obtain 
\begin{eqnarray*}
  \lefteqn{F(I) =  \sum_{T \in \iv{n}^k} f(I,T) } \\
& = &   \sum_{T \in \iv{n}^k}
      \E\left[    Y_{i_1t_1}  Y_{i_2t_1} \cdots Y_{i_{\ell-1} t_{\ell-1}}   Y_{i_{\ell}t_{\ell-1}}\cdot
      Y_{i_\ell t_\ell}  Y_{i_{\ell+1}t_\ell} \cdot 
      Y_{i_{\ell+1}t_{\ell+1}} Y_{i_{\ell+2}t_{\ell+1}}  \cdots Y_{i_kt_k}    Y_{i_{k+1}t_k}  \right]
  \\
&=&
    \sum_{ (t_1,\ldots, t_{\ell-1}, t_{\ell+1},\ldots, t_k)  \in \iv{n}^{k-1}}
   \!\!\!\!\!\!\!\!\!\!\!\!\!\!\!\!\!\! \E\left[    Y_{i_1t_1}  Y_{i_2t_1} \cdots Y_{i_{\ell-1} t_{\ell-1}}   Y_{i_{\ell}t_{\ell-1}}\cdot
    1\cdot Y_{i_{\ell+1}t_{\ell+1}} Y_{i_{\ell+2}t_{\ell+1}}  \cdots Y_{i_kt_k}    Y_{i_{k+1}t_k}  \right]
  \\
&=&
    \sum_{ (t_1,\ldots, t_{\ell-1}, t_{\ell+1},\ldots, t_k)  \in \iv{n}^{k-1}}
   \!\!\!\!\!\!\!\!\!\!\!\!\!\!\! \E\left[    Y_{i_1t_1}  Y_{i_2t_1} \cdots Y_{i_{\ell-1}
    t_{\ell-1}}   Y_{\mathbf{i_{\ell+1}}t_{\ell-1}}\cdot
      Y_{i_{\ell+1}t_{\ell+1}} Y_{i_{\ell+2}t_{\ell+1}}  \cdots Y_{i_kt_k}    Y_{i_{k+1}t_k}  \right]
  \\
&=&
   F(\tilde I),  
\end{eqnarray*}
where the new path $\tilde I=(i_1,\ldots, i_{\ell-1}, i_{\ell+1},\ldots,i_k) $ 
has one vertex less. Naturally, the process can be repeated if the new
path includes further runs.

\medskip\noindent{\bf Type-II  reduction: elimination of simple $i$-vertices.}\quad
Assume that an index $i_\ell$ appears in $I$ exactly once.  We say that $i_\ell$ is simple. The above
product reads as
\begin{eqnarray*}
  &&  Y_{i_1t_1}  Y_{i_2t_1} \cdots Y_{i_{\ell-1} t_{\ell-1}}   Y_{i_{\ell}t_{\ell-1}}\cdot
     Y_{i_\ell t_\ell}  Y_{i_{\ell+1}t_\ell} \cdot 
     Y_{i_{\ell+1}t_{\ell+1}} Y_{i_{\ell+2}t_{\ell+1}}  \cdots Y_{i_kt_k}    Y_{i_{k+1}t_k}
  \\
  &&  = Y_{i_1t_1}  Y_{i_2t_1} \cdots Y_{i_{\ell-1} t_{\ell-1}} \cdot
     \boxed { ~  Y_{i_{\ell}t_{\ell-1}} 
     Y_{i_\ell t_\ell} ~}   \cdot   Y_{i_{\ell+1}t_\ell} 
     Y_{i_{\ell+1}t_{\ell+1}} Y_{i_{\ell+2}t_{\ell+1}}  \cdots Y_{i_kt_k}    Y_{i_{k+1}t_k}.
\end{eqnarray*}
The boxed terms are the only terms with index $i_\ell$; they are
independent of the rest, and their expectation factorizes out with value
\[   \E [ Y_{i_{\ell}t_{\ell-1}}      Y_{i_\ell t_\ell}   ] =
  n^{-1}\1_{\{ t_{\ell-1} =t_{\ell}\}  }~.
\]
Therefore we have
\begin{eqnarray*}
  \lefteqn{F(I) =  \sum_{T \in \iv{n}^k} f(I,T) } \\
& = &   \sum_{T \in \iv{n}^k}
      \E\left[
      Y_{i_1t_1}  Y_{i_2t_1} \cdots Y_{i_{\ell-1} t_{\ell-1}}   Y_{i_{\ell}t_{\ell-1}}\cdot
      Y_{i_\ell t_\ell}  Y_{i_{\ell+1}t_\ell} \cdot 
      Y_{i_{\ell+1}t_{\ell+1}} Y_{i_{\ell+2}t_{\ell+1}}  \cdots Y_{i_kt_k}    Y_{i_{k+1}t_k}
      \right]
  \\
&=&
    \sum_{ T  \in \iv{n}^{k}}
   \!\!\! \E\left[
    Y_{i_1t_1}  Y_{i_2t_1} \cdots Y_{i_{\ell-1} t_{\ell-1}} \cdot
    n^{-1}\1_{\{ t_{\ell-1} =t_{\ell}\}  }
    \cdot   Y_{i_{\ell+1}t_\ell} 
    Y_{i_{\ell+1}t_{\ell+1}} Y_{i_{\ell+2}t_{\ell+1}}  \cdots Y_{i_kt_k}    Y_{i_{k+1}t_k}
    \right]
  \\
&=&
    n^{-1}
   \!\!\!\!\!\! \sum_{ (t_1,\ldots, t_{\ell-1}, t_{\ell+1},\ldots, t_k)  \in \iv{n}^{k-1}}
    \hspace{-1cm} \!\!\!\E\left[
    Y_{i_1t_1}  Y_{i_2t_1} \cdots Y_{i_{\ell-1} t_{\ell-1}} 
    \cdot   Y_{i_{\ell+1}  \mathbf{t}_\mathbf{\ell-1}} 
    Y_{i_{\ell+1}t_{\ell+1}} Y_{i_{\ell+2}t_{\ell+1}}  \cdots Y_{i_kt_k}    Y_{i_{k+1}t_k}
    \right]
  \\
&=&
   n^{-1} F(\tilde I),  
\end{eqnarray*}
where again,  the new path $\tilde I=(i_1,\ldots, i_{\ell-1}, i_{\ell+1},\ldots,i_k) $ 
has one vertex less. Hence, the Type-II reduction removes simple vertices. One can repeat Type-II reductions  if the new
path includes further simple vertices.  Because each  reduction
generates an  $n^{-1}$ factor, it is  important to keep track of the number of Type-II reductions.

\begin{definition}\label{def:pathshortening}
  The process of iterating, whenever possible, the  previous two types
  of reductions on a given path $I$ 
  is referred to as  
  the {\em  Path-Shortening Algorithm (PSA)}.

  The path-shortening function $PS$ applied to a path $I$ is the output
  $(S(I),\runs(I),\simples(I))$ of the algorithm 
  where $S(I)$ is the resulting shortened path, $\runs(I)$ is the total number of vertices that were removed by Type-I reductions and $\simples(I)$ is the total number of vertices that were removed by Type-II reductions.  We write 
\begin{equation*}
PS(I)= (S(I),\runs(I),\simples(I))\,.
\end{equation*}
Finally, a path $I$ is {\em irreducible} if $S(I)=I$.
\end{definition}

$PS(I)$ is the output of the following algorithm.\medskip

\noindent \textbf{Path-Shortening Algorithm $PS(I)$.}
\begin{itemize}
\item[Input: ] Path $I=(i_1,\ldots,i_k)$. Set $J=I$ and $\simples=0, \runs =0$. 
\item[Step 0:] Set $l= |I|$. Go to Step 1.
\item[Step 1:] Erase runs. 
\begin{itemize}
\item If $i_j=i_{j+1}$ for some $1\le j\le l$, where we interpret $i_{l+1}$ as $i_1$, erase element $i_j$ from the path. Set $I=(i_1, \ldots, i_{j-1}, i_{j+1}, \ldots, i_l)$, $\runs = \runs +1$ and return to Step 0.
\item Otherwise proceed with Step 2.
\end{itemize}
\item[Step 2:] Let $s$ be the number of elements of the path $I$ which appear exactly once. Set $\simples:=\simples+s$. Then define $I$ to be the resulting (possibly shorter) path which is obtained by deleting those $s$ elements from the path $I$. Go to Step 3.
\item[Step 3:] 
\begin{itemize}
\item If $J=I$, then return $(I,\runs,\simples)$ as output. 
\item If $J\neq I$, set $J:=I$ and return to Step 0.
\end{itemize}
\end{itemize} 
\medskip

Some simple properties of $PS(I)$ are as follows.
\begin{itemize}
\item For any path $I$, we have the identity 
  \begin{equation*}
    |I|=|S(I)|+\runs(I)+\simples(I)\,.
  \end{equation*}
\item For $I=(1,\ldots,r)$, $S(I)=\emptyset$, which shows that $S(I)$ can have length zero. 
\item By construction, 
  a shortened path $S(I)$ cannot be shortened further: it is
  irreducible.
\item 
  All elements in $S(I)$ appear at least twice. 
\item $|S(I)|$ takes values in the set $\{0,4\}\cup \{6,7,\ldots, |I|\}$. The shortest canonical irreducible path of positive length is $(1,2,1,2)$. 
\item If $I$ is an $r$-path then $\simples(I)= r-1$ is impossible.
\item $\simples(I)$ counts the number of total Type-II reductions
  until no more reduction steps (of Type-I or -II) are possible. Since
  every simple vertex of $I$ can be removed at the very beginning of
  the reduction procedure, it is easy to see that $\simples(I)$ is
  larger or equal to the number of simple vertices in $I$. Indeed, a
  Type-I reduction might create some new  simple vertex in a reduced
  path, thus increasing the number $\simples(I)$.
\end{itemize}

\begin{example}\label{ex:1}
Consider $I=(1,1,2,2)$. Then we have
  \[  F(1,1,2,2) \stackrel{\text{Type I}}{=} F(1,2,2)
    \stackrel{\text{Type I}}{=} F(1,2) \stackrel{\text{Type II}}{=}
    n^{-1}F(1)   \stackrel{\text{Type II}}{=} n^{-2} F(\emptyset) =
    n^{-2}\times n = n^{-1}.  
  \]
	In this case we get $PS(I)=(\emptyset, 2,2)$ and
	the reduction steps directly yield the value of $F$. 

  Next we consider $I=(1,2,1,2,3,3)$. Then we have
  \[  F(1,2,1,2,3,3) \stackrel{\text{Type I}}{=} F(1,2,1,2,3)
    \stackrel{\text{Type II}}{=}
    n^{-1}F(1,2,1,2).
  \]
  Thus the output of the path-shortening algorithm is $PS(I)=((1,2,1,2), 1,1)$.
  The problem of calculating $F(I)$ has been simplified to finding
  $F(1,2,1,2)$ which contains much  fewer terms;  see also \eqref{eq:fctF}.  
\end{example}

The next lemma summarizes the key advantage of path-shortening
for finding values of $F(\cdot)$ (see also \cite[Lemma 4.4]{heiny:mikosch:2017:corr}).

\begin{lemma}\label{prop:PSI}
Assume that the distribution of $\xi$ is symmetric. For any path $I\in \iv{p}^{|I|}$ of finite length, we have
\begin{equation}\label{pathlemma}
F(I)=F(S(I))\, n^{-\simples(I)}\,.
\end{equation}
\end{lemma}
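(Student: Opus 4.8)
The plan is to prove \eqref{pathlemma} by induction on the length $|I|$, with the two reduction identities derived in the two paragraphs preceding the lemma — the Type-I relation $F(I)=F(\tilde I)$ when $I$ has a run, and the Type-II relation $F(I)=n^{-1}F(\tilde I)$ when $I$ has a simple vertex — serving as the inductive step. The base case is when $I$ is irreducible: then $S(I)=I$ and $\runs(I)=\simples(I)=0$ by Definition~\ref{def:pathshortening}, so \eqref{pathlemma} is the tautology $F(I)=F(I)$; this includes the empty path (with $F(\emptyset)=n$) and the shortest irreducible path of positive length, $(1,2,1,2)$.

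For the inductive step I would assume \eqref{pathlemma} for all paths of length $<|I|$ and that $I$ is reducible, and then inspect the first iteration of the Path-Shortening Algorithm. If $I$ has a run at position $\ell$, Step~1 passes to $I'=(i_1,\dots,i_{\ell-1},i_{\ell+1},\dots,i_k)$ and only increments $\runs$, so $S(I)=S(I')$ and $\simples(I)=\simples(I')$; combining the Type-I identity $F(I)=F(I')$ with the induction hypothesis applied to $I'$ gives $F(I)=F(I')=F(S(I'))\,n^{-\simples(I')}=F(S(I))\,n^{-\simples(I)}$. If $I$ has no run, then being reducible it has $s\ge1$ simple vertices, which Step~2 deletes all at once, producing $I'$ with $|I'|=|I|-s$, $S(I)=S(I')$ and $\simples(I)=\simples(I')+s$; peeling these $s$ vertices off one at a time and using the Type-II identity at each stage should give $F(I)=n^{-s}F(I')$, and the induction hypothesis then closes the case. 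Since $\runs(I)$ never enters \eqref{pathlemma}, no bookkeeping of runs is needed — only that each Type-II step increments $\simples$ in lockstep with the $n^{-1}$ it produces.

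The step I expect to be the main obstacle is the multi-vertex Type-II deletion: I must check that the $s$ simple vertices present at the start of a Step~2 iteration really can be removed one after another, each as a bona fide Type-II reduction. The key observations are that a vertex occurring exactly once in $I$ still occurs exactly once after a \emph{different} vertex is deleted, so the ``simple'' property is not lost during the peeling, and that each deletion contributes exactly one factor $n^{-1}$ via $\E[Y_{it}Y_{it'}]=n^{-1}\1_{\{t=t'\}}$ — the vanishing of the off-diagonal term being precisely where the symmetry hypothesis on $\xi$ is used. I also need to confirm that the successive identifications $t_{\ell-1}\equiv t_\ell$ of column indices, including the cyclic boundary case $\ell=1$ where the identification wraps to $t_k$, never merge two vertices of the $I$-line, so that the path left after the expectation manipulations coincides with the one recorded by the algorithm. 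The degenerate endpoints are harmless: a length-$1$ path $(i)$ satisfies $F((i))=\sum_t\E[Y_{it}^2]=1$ directly, consistently with \eqref{pathlemma} via $F(\emptyset)=n$. Everything else is the routine bookkeeping sketched above.
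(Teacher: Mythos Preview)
Your proposal is correct and is precisely the natural formalization of the paper's argument: the paper does not write out a separate proof of this lemma but instead derives the Type-I identity $F(I)=F(\tilde I)$ and the Type-II identity $F(I)=n^{-1}F(\tilde I)$ immediately beforehand and then states the lemma as a summary (with a pointer to \cite[Lemma~4.4]{heiny:mikosch:2017:corr}). Your induction on $|I|$, together with the observation that the $s$ simple vertices in a Step~2 pass remain simple as they are peeled off one by one, is exactly what is needed to turn those two identities into a proof of \eqref{pathlemma}.
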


\begin{remark}\label{rem111}
The symmetry requirement on the distribution of $\xi$ is needed for the equality in \eqref{pathlemma}. It allows to neglect all expectations of odd powers of
  matrix entries in our moment method. Without symmetry the \rhs~in \eqref{pathlemma} needs to be multiplied with $(1+o(1))$ as $\nto$. It is possible to modifiy the other arguments of the proof of Theorem~\ref{thm:mainsimplified} accordingly.
	Therefore the symmetry assumption can likely be removed and Theorem \ref{thm:mainsimplified}
 also  holds for non-symmetrically distributed $\xi$. Indeed, this is natural since the moment formula \eqref{moment} which is a key ingredient of the proof only depends on the distribution of $\xi^2$ (and not $\xi$). However, since the current arguments are already involved enough we do not pursue the extension to non-symmetric $\xi$ in this paper.
\end{remark}

\subsection{Application of path-shortening}\label{sec:application}

This subsection explains how path-shortening is used to calculate the $k$-th moment $\E[m_k]$. From \eqref{eq:okljjj} and Lemma~\ref{prop:PSI} we get
\begin{equation}\label{eq:oklj}
\begin{split}
\E[m_k]
&\sim \sum_{r=1}^k   \sum_{I\in \mathcal{C}_{r,k}} p^{r-1} F(I)=  \sum_{r=1}^k   \sum_{I\in \mathcal{C}_{r,k}} p^{r-1} n^{-\simples(I)} \, F(S(I))\\
&= \sum_{r=1}^k   \Big(\sum_{I\in \mathcal{C}_{r,k}^0} + \sum_{I\in \mathcal{C}_{r,k}^1} + \sum_{I\in \mathcal{C}_{r,k}^2} \Big) p^{r-1} n^{-\simples(I)} \, F(S(I))\\
&=: S_{k0}+S_{k1}+S_{k2}\,.
\end{split}
\end{equation}
Here $\mathcal{C}_{r,k}$ is decomposed into the disjoint union $\mathcal{C}_{r,k}^0 \cup \mathcal{C}_{r,k}^1 \cup \mathcal{C}_{r,k}^2$, where  
\begin{equation}\label{eq:C0klj}
\begin{split}
\mathcal{C}_{r,k}^0 &= \{ I\in \mathcal{C}_{r,k}: S(I)=\emptyset \};\\
\mathcal{C}_{r,k}^1 &= \{ I\in \mathcal{C}_{r,k}: S(I)=I \};\\
\mathcal{C}_{r,k}^2 &= \{ I\in \mathcal{C}_{r,k}: 4\le |S(I)|\le k-1 \};
\end{split}
\quad 
\begin{split}
&\text{ completely reducible paths},\\
&\text{ irreducible paths},\\
&\text{ partially reducible paths}.
\end{split}
\end{equation}

First, we shall calculate $S_{k0}$. Lemma 3.4 in \cite{bai:silverstein:2010} determines the cardinality of $\mathcal{C}_{r,k}^0$:
for $k\in \N$ and $1\le r\le k$, 
\beam\label{lem:lemma3.4}
\# \mathcal{C}_{r,k}^0 = \frac{1}{r} \binom{k}{r-1} \binom{k-1}{r-1}\,.
\eeam
For $I\in \mathcal{C}_{r,k}^0$ we have $\simples(I)=r$ and therefore 
\begin{equation*}
F(S(I))\, n^{-\simples(I)} = n^{1-r}\,.
\end{equation*}
In view of $\lim_{\nto} p/n =\gamma$, this implies 
\begin{equation*}
S_{k0}= \sum_{r=1}^k \Big(\frac{p}{n}\Big)^{r-1} \# \mathcal{C}_{r,k}^0 \sim 
\sum_{r=1}^{k} \frac{1}{r} \binom{k}{r-1}\binom{k-1}{r-1}\gamma^{r-1} =\beta_k(\gamma)\,
\end{equation*}
the $k$-th moment of the \MP~law.

\par
Regarding $S_{k2}$, we consider a path $I\in \mathcal{C}_{r,k}^2$. The quantity $\simples(I)$ is easily obtained from the path-shortening algorithm. The shortened path $S(I)$ satisfies $S(S(I))=S(I)$. In words, $S(I)$ is irreducible and hence its canonical representative must be in the set $\mathcal{C}_{r-\simples(I), |S(I)|}^1$. Therefore it suffices to evaluate $F(J)$ for paths $J\in \mathcal{C}_{\widetilde r, \widetilde k}^1$ with $\widetilde r=2,\ldots, r; \widetilde k=4, \ldots, k-1$. 

\begin{remark}
In general, $S(I)$ is not canonical. We prefer to work with canonical
paths which can be nicely described via partitions. In order to
replace $S(I)$ with its canonical representative a simple relabeling
of the vertices is thus  required.
\end{remark}

What is left is to compute $F(I)$ for paths $I\in \mathcal{C}_{r,k}^1$, $r\le k$. This is the content of Section \ref{sec:F(I)} where we also determine the exact size of $\mathcal{C}_{r,k}^1$ which will turn out to be much smaller than $\mathcal{C}_{r,k}$.

\section{Calculation of $F(I)$}\label{sec:F(I)}

 In this section, we present a method to efficiently calculate \eqref{eq:fctF} by identifying those $T$ for which $f(I,T)$ contributes in a non-negligible way. The main theoretical goal is to prove Proposition \ref{prop:calcF}.  As a start, we characterize the sets of possible shortened paths $S(I)$.

\subsection{Precise counting via set partitions}\label{sec:counting}

Let $k\in \N$.
For $1\le r\le k$ an $r$-partition of $\iv{k}$ is a partition of
$\iv{k}$ into exactly $r$ (non empty) sets.
The sets $\mathcal{C}_{r,k}$ and $\mathcal{C}_{r,k}^1$ can be counted via partition numbers. We need the following lemma.
 
\begin{lemma}\label{lem:1to1}
There is a 1-to-1 correspondence between the $r$-partitions of $\iv{k}$ and the canonical $r$-paths of length $k$. 
\end{lemma}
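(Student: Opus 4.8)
The plan is to construct an explicit bijection between $r$-partitions of $\iv{k}$ and canonical $r$-paths of length $k$, and to check that the two maps I write down are mutually inverse. Recall that a canonical path $I=(i_1,\dots,i_k)$ satisfies $i_1=1$ and $i_\ell\le\max\{i_1,\dots,i_{\ell-1}\}+1$ for $\ell\ge2$, and that for a canonical $r$-path one has $\{I\}=\iv{r}$. So a canonical path is precisely a surjection $\iv{k}\to\iv{r}$ that respects the ``first-occurrence order'': the first time the value $m$ appears is strictly before the first time $m+1$ appears, for each $m<r$. That is exactly the data of an (unordered) set partition of $\iv{k}$ into $r$ blocks, once we agree to label the blocks by their minimal elements.

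First I would define the map $\Phi$ from canonical $r$-paths to $r$-partitions. Given $I\in\mathcal{C}_{r,k}$, set $B_m=\{\ell\in\iv{k}: i_\ell=m\}$ for $m=1,\dots,r$. Since $I$ is an $r$-path, each $B_m$ is nonempty, the $B_m$ are pairwise disjoint, and their union is $\iv{k}$; hence $\pi(I):=\{B_1,\dots,B_r\}$ is an $r$-partition of $\iv{k}$. Conversely, given an $r$-partition $\pi=\{B_1,\dots,B_r\}$ of $\iv{k}$, order the blocks by increasing minimum, say $\min B_1<\min B_2<\dots<\min B_r$ (these minima are distinct because the blocks are disjoint), and define $\Psi(\pi)=I$ by letting $i_\ell=m$ whenever $\ell\in B_m$. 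I then need to verify that $\Psi(\pi)$ is indeed canonical: $1\in B_1$ since $\min B_1=1$ (the block containing $1$ has the smallest minimum), so $i_1=1$; and for the growth condition, if $i_\ell=m$ then $\ell\ge\min B_m>\min B_{m-1}$, so the value $m-1$ has already occurred among $i_1,\dots,i_{\ell-1}$ when $m\ge2$, forcing $\max\{i_1,\dots,i_{\ell-1}\}\ge m-1$, i.e. $i_\ell\le\max\{i_1,\dots,i_{\ell-1}\}+1$. Also $\{\Psi(\pi)\}=\iv r$ since every $B_m$ is nonempty, so $\Psi(\pi)$ is a canonical $r$-path.

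Finally I would check $\Phi\circ\Psi=\mathrm{id}$ and $\Psi\circ\Phi=\mathrm{id}$. For $\Phi\circ\Psi$: starting from $\pi$, the path $\Psi(\pi)$ has $i_\ell=m\iff\ell\in B_m$, so the fiber over $m$ is exactly $B_m$, and $\Phi$ returns $\{B_1,\dots,B_r\}=\pi$. For $\Psi\circ\Phi$: starting from a canonical $r$-path $I$, the blocks of $\pi(I)$ are the level sets $B_m=\{\ell:i_\ell=m\}$; their minima are the first-occurrence positions of the values $1,2,\dots,r$, and because $I$ is canonical these first-occurrence positions are strictly increasing in $m$ (value $m$ cannot appear before value $m-1$ by the growth condition, and every value in $\iv r$ does appear). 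Hence reordering the blocks by increasing minimum recovers the original labeling $B_1,\dots,B_r$, and $\Psi(\pi(I))=I$. This establishes the claimed 1-to-1 correspondence.

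I do not expect any serious obstacle here; this is a bookkeeping lemma and the only subtlety is making sure the block-labeling convention (blocks ordered by their minima) matches the canonical-path convention (values ordered by first appearance), which is precisely what the growth condition $i_\ell\le\max\{i_1,\dots,i_{\ell-1}\}+1$ encodes. If I wanted to be terse I could simply say: a canonical $r$-path is by definition a surjection $\iv k\to\iv r$ whose value-first-occurrence order is the natural order on $\iv r$, and quotienting by relabeling of values identifies such surjections with set partitions into $r$ blocks; the content of the lemma is that the canonical representative picks out exactly one surjection per partition, which is immediate from the definition.
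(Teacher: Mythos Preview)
Your proof is correct and follows essentially the same approach as the paper: both define the forward map via the level sets $\{j:i_j=\ell\}$ and the inverse map by ordering the blocks so that their minima increase, then reading off $i_\ell$ from block membership. Your version is in fact more complete than the paper's, since you explicitly verify the canonical growth condition for $\Psi(\pi)$ and check that $\Phi$ and $\Psi$ are mutual inverses, whereas the paper leaves these as ``follows easily.''
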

\begin{proof}
Assume that $I=(i_1,i_2,\ldots, i_k) \in \mathcal{C}_{r,k}$. Define the sets
\begin{equation}\label{eq:ergrs}
A_{\ell}=\{j:i_j=\ell \}\,, \quad \ell=1,\ldots,r\,.
\end{equation}
The collection of the sets $A_{\ell}$ forms an $r$-partition of $\iv{k}$. 

Conversely, let $\{B_1, \ldots, B_r\}$ be an $r$-partition of $\iv{k}$. Define the sets $A_{\ell}= B_{v(\ell)}$, where $v(1)$ is such that $1\in B_{v(1)}$ and 
\begin{equation*}
v(\ell) \quad \text{ such that } \min \Big\{ \iv{k} \backslash \Big( \bigcup_{a\in \iv{\ell-1}} A_{a} \Big) \Big\}  \in B_{v(\ell)}\,, \quad \ell=2,\ldots,r\,.
\end{equation*}
Obviously, the sets $(A_{\ell})$ and $(B_{\ell})$ constitute the same partition. Now we obtain a path $(i_1,i_2,\ldots, i_k)$ via
\begin{equation*}
i_j= \arg_{\ell} (j\in A_{\ell}) \,, \quad j=1,\ldots,k\,.
\end{equation*}
It follows easily from this construction that $(i_1,i_2,\ldots, i_k)$ is a canonical $r$-path of length $k$.
The proof is complete.
\end{proof}
In what follows, we will assume without loss of generality that any
sets $B_1,\ldots,B_r$ constituting an $r$-partition of $\iv{k}$ are
listed in the unique order such that $v(\ell)=\ell$, $1\le \ell \le
r$, with the function $\ell\mapsto v(\ell)$ introduced in the above proof. Under this convention, the set $A_{\ell}$ in the $r$-partition constituted by $A_1,\ldots,A_r$ contains the locations of the integer $\ell$ in the path $I$. Conversely, the sets $A_1,\ldots,A_r$ can be recovered from $I$ via \eqref{eq:ergrs}.  

The next result is classical in combinatorics \citep[Chapter V]{Comtet74}.
\begin{lemma}\label{lem:partitions}
The number of $r$-partitions of $\iv{k}$ is the Stirling number of the second kind  given by
\begin{equation}\label{eq:stirling2}
  B(k,r)=\frac{1}{r!}\sum_{j=1}^{r} (-1)^{r-j} \binom{r}{j} j^k.
\end{equation}
The number of partitions of $\iv{k}$ is the $k$-th Bell number $B(k)$, 
\begin{equation*}
B(k)=\sum_{r=1}^k B(k,r)\,.
\end{equation*}
The Bell numbers satisfy the recursion
\begin{equation*}
B(k+1)=\sum_{j=0}^k {k\choose j} B(j).
\end{equation*}
\end{lemma}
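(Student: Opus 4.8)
The three assertions are classical, and I would prove each by a short counting argument, using nothing beyond elementary bijections and inclusion--exclusion.

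\emph{Stirling numbers.} The plan is to count \emph{ordered} $r$-partitions, i.e.\ surjections $f\colon\iv{k}\to\iv{r}$, in two ways. On the one hand, the fibres $f^{-1}(1),\ldots,f^{-1}(r)$ of a surjection form an $r$-partition of $\iv{k}$ together with a labelling of its blocks by $\iv{r}$, and conversely every $r$-partition admits exactly $r!$ such labellings; hence the number of surjections equals $r!\,B(k,r)$. On the other hand, by inclusion--exclusion over the subsets of the codomain that are avoided, the number of maps $\iv{k}\to\iv{r}$ whose image misses a fixed $i$-element set is $(r-i)^k$, so the number of surjections equals $\sum_{i=0}^{r}(-1)^i\binom{r}{i}(r-i)^k$. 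Re-indexing by $j=r-i$, the boundary term $j=0$ vanishes (because $k\ge 1$), and the sum becomes $\sum_{j=1}^{r}(-1)^{r-j}\binom{r}{j}\,j^k$; dividing by $r!$ gives \eqref{eq:stirling2}.

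\emph{Bell numbers and the recursion.} The identity $B(k)=\sum_{r=1}^k B(k,r)$ is then immediate, since every partition of $\iv{k}$ has a well-defined number of blocks $r$ with $1\le r\le k$, and grouping partitions by this number decomposes the set of all partitions disjointly. For the recursion I would classify a partition of $\iv{k+1}$ by the block $C$ containing the element $k+1$: writing $m=\#C$ with $1\le m\le k+1$, the block $C$ is determined by choosing its remaining $m-1$ elements among $\iv{k}$ in $\binom{k}{m-1}$ ways, and the complementary set of $k+1-m$ elements of $\iv{k}$ may be partitioned arbitrarily, contributing $B(k+1-m)$ with the convention $B(0)=1$ for the empty partition. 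Summing over $m$ gives $B(k+1)=\sum_{m=1}^{k+1}\binom{k}{m-1}B(k+1-m)$, and the substitution $j=k+1-m$ together with $\binom{k}{m-1}=\binom{k}{k+1-m}=\binom{k}{j}$ yields $B(k+1)=\sum_{j=0}^{k}\binom{k}{j}B(j)$.

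None of the steps presents a genuine obstacle, which is why the lemma is labelled classical; the only places that warrant an explicit sentence rather than a one-line computation are the factor $r!$ relating surjections to unordered $r$-partitions, and the two re-indexing substitutions, which I would spell out so that the vanishing and merging of boundary terms are transparent.
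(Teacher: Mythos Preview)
Your proof is correct and entirely standard. The paper does not actually supply a proof of this lemma: it simply states that the result is classical in combinatorics and cites \cite[Chapter~V]{Comtet74}. Your argument---counting surjections $\iv{k}\to\iv{r}$ in two ways for the Stirling formula, grouping by block count for the Bell number, and classifying by the block containing $k+1$ for the recursion---is precisely the textbook approach one would find in Comtet or any equivalent reference, so there is nothing to compare.
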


A combination of Lemmas \ref{lem:1to1} and \ref{lem:partitions} yields:

\begin{lemma}\label{lem:crk}
The number of canonical $r$-paths of length $k$ is $B(k,r)$, i.e., 
\begin{equation*}
\# \mathcal{C}_{r,k}=B(k,r).
\end{equation*}
\end{lemma}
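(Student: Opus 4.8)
The plan is to combine the two preceding lemmas directly. Lemma~\ref{lem:1to1} establishes a bijection between the set $\mathcal{C}_{r,k}$ of canonical $r$-paths of length $k$ and the set of $r$-partitions of $\iv{k}$. Lemma~\ref{lem:partitions} states that the number of $r$-partitions of $\iv{k}$ equals the Stirling number of the second kind $B(k,r)$. Since a bijection preserves cardinality, it follows immediately that $\#\mathcal{C}_{r,k}=B(k,r)$.

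More concretely, first I would invoke Lemma~\ref{lem:1to1} to fix an explicit bijection $\Phi\colon \mathcal{C}_{r,k}\to \{\text{$r$-partitions of }\iv{k}\}$, namely the one sending $I=(i_1,\ldots,i_k)$ to the partition $\{A_1,\ldots,A_r\}$ with $A_\ell=\{j: i_j=\ell\}$, as constructed in the proof of Lemma~\ref{lem:1to1}. The canonical normalization $i_1=1$ and $i_\ell\le \max\{i_1,\ldots,i_{\ell-1}\}+1$ is exactly what guarantees that this map is well-defined and injective, and the converse construction in that proof shows surjectivity. Then I would apply the counting formula \eqref{eq:stirling2} from Lemma~\ref{lem:partitions} to conclude that the domain $\mathcal{C}_{r,k}$ has the same cardinality $B(k,r)$ as the codomain.

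There is essentially no obstacle here: the statement is a one-line consequence of two results that have already been proved, and the proof is a routine ``cardinality is invariant under bijection'' argument. If anything deserves a word of care, it is only to note that the correspondence of Lemma~\ref{lem:1to1} is stated with the ordering convention $v(\ell)=\ell$ adopted after its proof, so that each isomorphism class of paths corresponds to exactly one canonical path and hence to exactly one (unordered) $r$-partition; with that convention in place the count is immediate.

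\begin{proof}
By Lemma~\ref{lem:1to1}, the map $I\mapsto \{A_1,\ldots,A_r\}$ with $A_\ell=\{j:i_j=\ell\}$ is a bijection from $\mathcal{C}_{r,k}$ onto the set of $r$-partitions of $\iv{k}$. Hence $\#\mathcal{C}_{r,k}$ equals the number of $r$-partitions of $\iv{k}$, which by Lemma~\ref{lem:partitions} is the Stirling number of the second kind $B(k,r)$.
\end{proof}
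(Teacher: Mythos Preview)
Your proof is correct and follows exactly the same approach as the paper, which simply states that the result is ``a combination of Lemmas~\ref{lem:1to1} and~\ref{lem:partitions}.'' You have merely spelled out in a few lines what the paper leaves implicit.
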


Next, we count the canonical $r$-paths of length $k$ that remain unchanged by either a Type-I or Type-II reduction. 
We start with Type II, i.e. elimination of simple vertices.

A $2$-associated Stirling number of the second kind is the number of
ways to partition a set of $k$ objects into $r$ subsets, with each
subset containing at least $2$ elements
\citep[page 222]{Comtet74}.  It is denoted by $B_2(k,r)$ and obeys the recurrence relation
\begin{equation*}
  B_2(k+1, r)=r\, B_2(k, r)+k B_2(k-1, r-1)\,.
\end{equation*}
Its generating function is
\[  \sum_{k,r\ge 0} B_2(k,r) u^r \frac{t^k}{k!}=\exp\left\{   u \left( \frac{t^2}{2!}+\frac{t^3}{3!}+\cdots
    \right) \right\}\,.
\]
This leads to 
the closed-form formula
\begin{equation}\label{eq:b2kr}
  B_2(k, r) = \frac{k!}{r!} \sum_{\substack{j_1 + \cdots + j_r=k\\ j_{\ell}\ge 2}} \frac{1}{j_1! \cdots j_r!}\,.
\end{equation}

\begin{lemma}\label{lem:type2}
  There are exactly $B_2(k,r)$ canonical $r$-paths of length $k$
  without any simple vertex; they are thus  invariant under Type-II reductions.
\end{lemma}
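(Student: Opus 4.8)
The plan is to read the result directly off the partition encoding of canonical paths established in Lemma~\ref{lem:1to1}, so that almost nothing new has to be proved. Recall that under that bijection a canonical $r$-path $I=(i_1,\ldots,i_k)$ corresponds to the $r$-partition $\{A_1,\ldots,A_r\}$ of $\iv{k}$ with $A_\ell=\{j:i_j=\ell\}$; in particular $\#A_\ell$ is exactly the number of occurrences of the vertex $\ell$ in $I$.

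First I would translate the property ``$I$ has no simple vertex'' into a property of the associated partition. A vertex $\ell\in\{I\}$ is simple precisely when it occurs once in $I$, i.e.\ when $\#A_\ell=1$. Hence $I$ contains no simple vertex if and only if $\#A_\ell\ge 2$ for every $\ell\in\iv{r}$, that is, if and only if every block of the partition $\{A_1,\ldots,A_r\}$ has cardinality at least $2$. Since the map in Lemma~\ref{lem:1to1} is a bijection, it restricts to a bijection between the canonical $r$-paths of length $k$ with no simple vertex and the $r$-partitions of $\iv{k}$ all of whose blocks have cardinality at least $2$. By definition (and the closed form \eqref{eq:b2kr}) the cardinality of the latter family is the $2$-associated Stirling number $B_2(k,r)$, which yields the claimed count $\#\{\text{canonical }r\text{-paths of length }k\text{ without simple vertex}\}=B_2(k,r)$.

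For the ``invariant under Type-II reductions'' assertion, I would simply observe that a Type-II reduction step deletes exactly the simple vertices of the path it is applied to. If $I$ has no simple vertex, there is nothing to delete, so the step returns $I$ unchanged; in particular $I$ remains canonical. (Note this does not claim invariance under the whole PSA: such an $I$ may still contain runs and hence be shortened by Type-I reductions, which is consistent with the later use of these paths in $\mathcal{C}^1_{r,k}$ versus $\mathcal{C}^0_{r,k}$.)

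I do not expect a real obstacle here: the entire combinatorial content is packaged in Lemma~\ref{lem:1to1} and in the definition of $B_2(k,r)$. The only point worth a careful sentence is that, under the normalising convention $v(\ell)=\ell$ fixed right after the proof of Lemma~\ref{lem:1to1}, the identification of ``$\ell$ is simple in $I$'' with ``$\#A_\ell=1$'' holds with no off-by-one ambiguity, which is immediate since $A_\ell$ literally lists the positions of $\ell$ in $I$.
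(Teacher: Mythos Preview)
Your argument is correct and follows exactly the same route as the paper: use the bijection of Lemma~\ref{lem:1to1} to translate ``no simple vertex'' into ``every block of the partition has size at least $2$,'' then invoke the definition of $B_2(k,r)$. The paper's proof is just a terser version of what you wrote.
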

\begin{proof}
The $r$-partitions of $\iv{k}$ with each set $A_{\ell}$ having at least $2$ elements are counted by $B_2(k,r)$. The observation that the existence of a simple vertex in a path is equivalent to some set $A_{\ell}$ having just one element finishes the proof.
\end{proof}

Now we count the paths which are invariant under Type-I reductions, i.e. have no runs.

Define the reduced Stirling numbers of the second kind, denoted $B^d(k,r)$, to be the number of ways to partition the integers $\iv{k}$ into $r$ nonempty subsets such that all elements in each subset have pairwise distance at least $d$.  That is, for any integers $i$ and $j$ in a given subset, it is required that $|i-j| \ge d$. It has been shown that these numbers satisfy
\begin{equation*}
B^d(k, r) = B(k-d+1, r-d+1)\,, \quad k \geq r \geq d\,.
\end{equation*}
We will apply this fact with $d=2$. 

\begin{lemma}\label{lem:type1}
There are $\sum_{j=0}^{k-r} (-1)^{j} B^2(k-j,r)=\sum_{j=0}^{k-r} (-1)^{j} B(k-j-1,r-1)$ canonical $r$-paths of length $k$ which are invariant under Type-I reductions. 
\end{lemma}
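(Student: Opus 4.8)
The plan is to recast the statement in the language of set partitions via Lemma~\ref{lem:1to1} and then to solve a short two-term recursion. Under that correspondence a canonical $r$-path $I=(i_1,\dots,i_k)$ is encoded by the $r$-partition $\{A_1,\dots,A_r\}$ of $\iv{k}$ with $A_\ell=\{j:i_j=\ell\}$. A run $i_j=i_{j+1}$ is precisely the statement that the consecutive positions $j$ and $j+1$ lie in one block; and, since the Path-Shortening Algorithm interprets $i_{k+1}$ as $i_1$, a cyclic run $i_k=i_1$ is precisely the statement that $1$ and $k$ lie in one block. Hence the canonical $r$-paths of length $k$ that are invariant under Type-I reductions correspond bijectively to the $r$-partitions of $\iv{k}$ no block of which contains two positions adjacent on the cycle $1,2,\dots,k,1$; write $N(k,r)$ for the number of such partitions.

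First I would isolate the non-cyclic count. The $r$-partitions of $\iv{k}$ in which no block contains a pair $\{j,j+1\}$ with $1\le j\le k-1$ are exactly the partitions all of whose blocks have pairwise distances at least $2$, so by the definition of the reduced Stirling numbers they are counted by $B^2(k,r)$. Those of them in which, additionally, $1$ and $k$ fall in the same block can be counted by merging the two endpoints $1$ and $k$ of the segment $1,\dots,k$ into one point: the forbidden pairs of the merged ground set (which has $k-1$ elements) form the cycle on $\{1{=}k\},2,\dots,k-1$, and merging gives a validity-preserving bijection, in both directions, between such partitions and the $N(k-1,r)$ cyclic-valid $r$-partitions on $k-1$ points. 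Subtracting, I obtain
\begin{equation*}
N(k,r)=B^2(k,r)-N(k-1,r).
\end{equation*}

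I would then solve this by induction on $k$ for fixed $r\ge 2$. The base case $k=r$ is clear: the only canonical $r$-path of length $r$ is $(1,2,\dots,r)$, which for $r\ge 2$ has no run at all, so $N(r,r)=1=B^2(r,r)$. For the inductive step I substitute the induction hypothesis for $N(k-1,r)$ into the recursion and reindex the resulting alternating sum, which gives $N(k,r)=\sum_{j=0}^{k-r}(-1)^j B^2(k-j,r)$. The second displayed form in the lemma then follows at once from the identity $B^2(m,r)=B(m-1,r-1)$ recalled above. (One may alternatively run inclusion-exclusion directly over all $k$ cycle-edges, but that produces a different-looking, though equal, expression; the single-edge recursion is what reproduces the stated form.)

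The step I expect to be the crux is the endpoint-merging bijection: one must check that identifying $1$ with $k$ turns ``no two consecutive on the segment'' exactly into ``no two consecutive on the shorter cycle'' --- in particular that $2$ and $k-1$ cannot lie in the merged block, which is exactly what renders the new wrap-around edge harmless --- and that the inverse unmerging always returns a segment-valid partition. A related point that must be handled carefully is the role of the cyclic run $i_k=i_1$ in the definition of Type-I reduction: it is this cyclic edge that forces the cycle on $k-1$ vertices (rather than a path) to appear after merging, and hence the full alternating sum rather than a single Stirling number.
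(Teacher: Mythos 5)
Your argument is correct and follows essentially the same route as the paper: both count the invariant paths as $B^2(k,r)$ minus the number $D(k,r)$ of distance-$2$ partitions with $1$ and $k$ in a common block, and both identify that subtracted count with a $(k-1)$-point problem by adding/removing the element $k$ to/from the block containing $1$ (your merge bijection is exactly this map). The only difference is bookkeeping: you close the argument via the two-term recursion $N(k,r)=B^2(k,r)-N(k-1,r)$ solved by induction, whereas the paper iterates the exceptional case ($k-1$ in the block of $1$) directly to telescope out the same alternating sum.
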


\begin{proof}
Distance $2$ excludes almost all runs. By our convention $i_1$ and $i_k$ can form a run if they are equal, so we have to take care of them. Hence, there are $B^2(k,r)-D(k,r)$ canonical $r$-paths of length $k$ which are invariant under Type-I reductions. Here $D(k,r)$ denotes the number of ways to partition the integers $\iv{k}$ into $r$ nonempty subsets such that all elements in each subset have pairwise distance at least $2$ and the elements $1$ and $k$ lie in the same set.

It remains to determine $D(k,r)$. In what follows, $\mathcal{P}_k=\{A_1,\ldots, A_r\}$ denotes a partition of the integers $\iv{k}$ into $r$ nonempty subsets such that all elements in each subset have pairwise distance at least $2$. We use the convention $1\in A_1$. For clarification of the notation we remark that the set $A_1$ depends on the partition at hand and might be different from line to line.

 We can obtain each of the $D(k,r)$ partitions above by adding the element $k$ to the set $A_1$ of some $\mathcal{P}_{k-1}$. This works for all $B^2(k-1,r)$ partitions $\mathcal{P}_{k-1}$, except those with $k-1\in A_1$ (because adding $k$ to this set would violate the distance $2$ requirement).  We can create such an exceptional $\mathcal{P}_{k-1}$ by adding $k-1$ to $A_1$ of a partition $\mathcal{P}_{k-2}$. Again this procedure works for all $B^2(k-2,r)$ partitions $\mathcal{P}_{k-2}$, except those with $k-2\in A_1$. We continue until there are no exceptional partitions, i.e. until we reach the partitions $\mathcal{P}_{r}$ because then $r\in A_1$ is impossible since $\mathcal{P}_{r}=\{\{1\},\{2\},\ldots,\{r\}\}$. This shows that 
\begin{equation*}
D(k,r)=B^2(k-1,r)-B^2(k-2,r)+B^2(k-2,r)+\ldots+(-1)^{k-r+1} B^2(r,r)
\end{equation*}
and therefore
\begin{equation*}
B^2(k,r)-D(k,r)=\sum_{j=0}^{k-r} (-1)^{j} B^2(k-j,r)\,.
\end{equation*}
\end{proof}

Our goal is to find the number of canonical $r$-paths of length $k$ which are invariant under both types of reduction. 

\begin{proposition}\label{prop:dfg}
  The number of irreducible canonical $r$-paths of length $k$ is
  \begin{equation*}
    \# \mathcal{C}_{r,k}^1=M(k,r)\,,
  \end{equation*}
where  $M(k,r)$ is the number of $r$-partitions $\{ A_1,\ldots, A_r\}$ of $\iv{k}$ such that:
  \begin{enumerate}
  \item Each $A_{\ell}$ has at least two entries.
  \item For any integers $i$ and $j$ in a given subset $A_{\ell}$, one has $|i-j| \ge 2$. Additionally, $1$ and $k$ lie in different sets. 
  \end{enumerate}
\end{proposition}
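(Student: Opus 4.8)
The plan is to reduce the statement to the bijection of Lemma~\ref{lem:1to1} by first identifying which canonical $r$-paths are the fixed points of the path-shortening algorithm. First I would argue that for a path $I$ we have $S(I)=I$ if and only if $I$ admits neither a Type-I nor a Type-II reduction, i.e.\ $I$ has no run (including the cyclic run $i_k=i_1$) and no simple vertex. One direction is immediate: if $I$ has no run and no simple vertex, then Step~1 of the PSA leaves $I$ unchanged, Step~2 deletes nothing (so $s=0$), and Step~3 returns $I$ because $J=I$; hence $S(I)=I$. For the converse, if $I$ contained a run then Step~1 would strictly shorten it and the algorithm, looping back to Step~0, would output a path of length smaller than $|I|$; likewise, if $I$ had no run but a simple vertex, Step~2 would delete it and Step~3 would loop back with a strictly shorter path. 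In either case $S(I)\neq I$, so every fixed point of $S$ is run-free and simple-vertex-free. In particular $\mathcal{C}_{r,k}^1$ is exactly the set of canonical $r$-paths of length $k$ that have no run and no simple vertex.

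Next I would transport this description to partitions. By Lemma~\ref{lem:1to1}, a canonical $r$-path $I=(i_1,\ldots,i_k)\in\mathcal{C}_{r,k}$ corresponds to the $r$-partition $\{A_1,\ldots,A_r\}$ of $\iv{k}$ with $A_\ell=\{j:i_j=\ell\}$. Under this correspondence, $I$ has a simple vertex $\ell$ precisely when $\#A_\ell=1$, so ``$I$ has no simple vertex'' is equivalent to condition~(1) that every $A_\ell$ has at least two elements; and $I$ has a run $i_j=i_{j+1}$ (with the convention $i_{k+1}:=i_1$) precisely when two cyclically consecutive positions lie in the same block, so ``$I$ has no run'' is equivalent to: no block contains two integers differing by $1$ — which is the pairwise-distance-$\ge 2$ requirement — together with $1$ and $k$ lying in different blocks, i.e.\ condition~(2). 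Consequently the bijection of Lemma~\ref{lem:1to1} restricts to a bijection between $\mathcal{C}_{r,k}^1$ and the set of $r$-partitions of $\iv{k}$ satisfying (1) and (2), whose cardinality is $M(k,r)$ by definition; hence $\#\mathcal{C}_{r,k}^1=M(k,r)$.

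I do not expect a genuine obstacle here: the content is a careful unwinding of the PSA together with the path–partition dictionary. The one place requiring attention is the cyclic convention in the definition of a run (the pair $(i_k,i_1)$), which is exactly what forces the extra clause in condition~(2) that $1$ and $k$ lie in different blocks, as well as the elementary remark that ``no two consecutive integers in a block'' is the same as ``pairwise distance $\ge 2$ in a block''. The counts of paths with no runs (Lemma~\ref{lem:type1}) and with no simple vertices (Lemma~\ref{lem:type2}) are not needed for the proof of the identity $\#\mathcal{C}_{r,k}^1=M(k,r)$ itself, but they foreshadow how $M(k,r)$ can subsequently be made explicit, e.g.\ by inclusion–exclusion combining those two families of constraints with the reduced and $2$-associated Stirling numbers.
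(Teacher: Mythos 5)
Your proposal is correct and follows essentially the same route as the paper: the paper's (very terse) proof likewise rests on the observation that irreducibility, i.e.\ $S(I)=I$, is equivalent to having no runs and no simple vertices, and then translates these two conditions into (1) and (2) through the path--partition correspondence underlying Lemmas~\ref{lem:type2} and~\ref{lem:type1}. Your write-up merely makes explicit the fixed-point argument for the PSA and the cyclic-run bookkeeping that the paper leaves implicit, and correctly notes that the counting formulas of those lemmas are not needed for the identity itself.
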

\begin{proof}
The result follows directly from Lemmas \ref{lem:type2} and \ref{lem:type1} and their respective proofs. Conditions (1) and (2) are necessary and sufficient for irreducibility.
\end{proof}
\begin{remark}
Proposition \ref{prop:dfg} characterizes the elements in $\mathcal{C}_{r,k}^1$.  In Lemmas \ref{lem:type2} and \ref{lem:type1}, we have seen how to deal with conditions (1) and (2) separately, which gives explicit upper bounds on $M(k,r)$. 
\end{remark}

\subsection{Some technical lemmas}
In Lemma \ref{lem:1to1}, we have seen that every path $I\in \mathcal{C}_{r,k}$ corresponds to a unique $r$-partition of $\iv{k}$ and vice versa. For simplicity, we will write $\partition (I)$ for this partition. Similarly, we define $\path (\mathcal{P})$ as the path that corresponds to the partition $\mathcal{P}$.

Next, we need the notion of {\em refined partitions}. Assume $\mathcal{P}=\{A_1,\ldots,A_r\}$ is an $r$-partition of $\iv{k}$. A partition $\{B_1,\ldots,B_{r+s}\}$ of $\iv{k}$ is called an $s$-refinement of $\mathcal{P}$ if each set in $\mathcal{P}$ is the union of some $B_i$'s. Clearly, every $s$-refinement of an $r$-partition is an $(r+s)$-partition. 

Recall the definition of a $\Delta(I,T)$ graph, its skeleton $\Delta^0(I,T)$ and $N_e(I,T)$ the number of edges of the skeleton.

We present some lemmas that help determine which $f(I,T)$ contribute most to $F(I)$. 
\begin{lemma}\label{lem:refinement}
Fix $I\in \mathcal{C}_{r,k}$ and $1\le s\le k-1$. Assume $T_1\in \mathcal{C}_{s+1,k}$ is such that $f(I,T_1)>0$ and $\Delta^0(I,T_1)$ is a tree. Then there exists a $T_2\in \mathcal{C}_{s,k}$ such that $f(I,T_2)>0$ and $\Delta^0(I,T_2)$ is a tree. 
Moreover, $T_2$ can be chosen so that $\partition(T_1)$ is a $1$-refinement of $\partition(T_2)$.
\end{lemma}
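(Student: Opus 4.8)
\emph{Proof plan.} Write $\mathcal{T}:=\Delta^{0}(I,T_1)$. Since $I\in\mathcal{C}_{r,k}$ has $r$ distinct vertices and $T_1\in\mathcal{C}_{s+1,k}$ has $s+1$, the tree $\mathcal{T}$ is bipartite with the $i$-vertices $\{I\}$ on one side and the $t$-vertices $\{T_1\}$ on the other, so it has exactly $(r+s+1)-1=r+s$ edges; also $f(I,T_1)>0$ forces, via property (1) (symmetry) and \eqref{eq:fI2}, that every edge of $\Delta(I,T_1)$ has even degree. The idea is to obtain $T_2$ from $T_1$ by identifying two $t$-vertices that lie at distance $2$ in $\mathcal{T}$, i.e.\ that share an $i$-vertex neighbour.

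First I would locate the pair. Every edge of $\mathcal{T}$ joins an $i$-vertex to a $t$-vertex, so the degrees of the $r$ vertices in $\{I\}$ sum to $r+s>r$; hence some $i$-vertex $v$ has degree at least $2$ in $\mathcal{T}$, and its neighbours are distinct $t$-vertices. Fix two of them, $t_a\neq t_b$. Because $\mathcal{T}$ is a tree, $v$ is their \emph{only} common neighbour (a second common neighbour would close a $4$-cycle); this is the one place where the tree hypothesis is genuinely used.

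Next, I would define $T_2$ as the path obtained from $T_1$ by relabelling every occurrence of the value $t_b$ to $t_a$, and then passing to the canonical representative of the resulting isomorphism class; the last step alters neither $f(I,\cdot)$ (the columns of $\Y$ are exchangeable) nor the induced partition of $\iv{k}$. Then $T_2$ uses exactly $s$ distinct values, so $T_2\in\mathcal{C}_{s,k}$, and $\partition(T_2)$ is $\partition(T_1)$ with the blocks corresponding to $t_a$ and $t_b$ merged; equivalently $\partition(T_1)$ is a $1$-refinement of $\partition(T_2)$, which is the ``moreover'' assertion. To finish, observe that $\Delta(I,T_2)$ is literally $\Delta(I,T_1)$ with the vertices $t_a,t_b$ glued, and no loop is created since $t_a,t_b$ lie on the same side of the bipartition. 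The edge joining $v$ to the merged vertex has degree equal to the sum of the (even) degrees of the two edges $\{v,t_a\}$ and $\{v,t_b\}$ in $\Delta(I,T_1)$, hence even, while all other degrees are unchanged; so every edge of $\Delta(I,T_2)$ has even degree and \eqref{moment} together with \eqref{eq:fI2} gives $f(I,T_2)>0$. Finally $\Delta^{0}(I,T_2)$ is $\mathcal{T}$ with $t_a,t_b$ identified; since $v$ is their unique common neighbour, exactly one pair of edges gets glued, leaving a connected graph with $r+s$ vertices and $r+s-1$ edges, i.e.\ a tree.

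The only non-routine point is the selection step: one has to see both that a pair of $t$-vertices sharing a neighbour always exists (the degree count on $\{I\}$) and that identifying such a pair, and only such a pair, keeps the skeleton acyclic (the unique-path property of trees). Everything else is bookkeeping on how $\Delta$-graphs, their skeletons, and the associated set partitions transform when two $t$-vertices are merged.
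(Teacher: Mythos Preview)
Your proof is correct and follows essentially the same approach as the paper: both arguments merge two $t$-vertices that share exactly one common $i$-neighbour, then verify that the resulting skeleton is a tree with all edge degrees even. The only cosmetic difference is in how the pair is located---the paper works with the neighbour sets $Q_t=\{i:(i,t)\in\Delta^0(I,T_1)\}$ and uses connectivity to find $t\neq u$ with $\#(Q_t\cap Q_u)=1$, whereas you use the dual observation that the $i$-side degrees sum to $r+s>r$, forcing some $i$-vertex to have two $t$-neighbours; these are equivalent viewpoints on the same bipartite tree.
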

\begin{proof}
Let $I\in \mathcal{C}_{r,k}$ and $1\le s\le k-1$. Assume $T_1\in \mathcal{C}_{s+1,k}$ is such that $f(I,T_1)>0$ and $\Delta^0(I,T_1)$ is a tree. We shall construct a $T_2$  with the desired properties.

The tree $\Delta^0(I,T_1)$ has the $N_e(I,T_1)=r+s$ edges $(i,t), t\in \iv{s+1}, i \in Q_t$ for appropriate sets $Q_t\subset \iv{r}$ satisfying $ \#{Q_1}+\cdots +\#Q_{s+1}=r+s$.
Since $\Delta^0(I,T_1)$ is connected, we can find for any $T_1$-vertex $t\in \iv{s+1}$ a vertex $u\neq t$ such that $Q_t  \cap Q_u \neq \emptyset$. 

Moreover, for any $t\neq u$ the intersection of $Q_t \cap Q_u$ contains at most $1$ element. We prove this fact by contradiction. Assume that $Q_t \cap Q_u$ contained at least two elements $i$ and $j$. Then the graph with the four edges $(i,u),(i,t), (j,u),(j,t)$ is a cycle and a subgraph of $\Delta^0(I,T_1)$. Hence, $\Delta^0(I,T_1)$ could not be a tree. 

Choose $t\neq u \in \iv{s+1}$ such that $\# (Q_t \cap Q_u) =1$. We construct $T_2$ from $\partition (T_1)=\{A_1,\ldots, A_{s+1}\}$. Consider the $s$-partition of $\iv{k}$,
\begin{equation*}
\mathcal{P} = \{A_i:i\in \iv{s+1}\backslash \{t,u\}, A_t  \cup A_u\}\,.
\end{equation*}
By construction, $\partition (T_1)$ is a $1$-refinement of $\mathcal{P}$. Now set $T_2= \path(\mathcal{P}) \in \mathcal{C}_{s,k}$. $\Delta^0(I,T_2)$ is a connected graph with 
\begin{equation*}
\#{Q_1}+\cdots +\#Q_{s+1} -1 =r+s-1 
\end{equation*}
edges and thus a tree. Since the edge degrees of $\Delta(I,T_2)$ are either the same or a sum of edge degrees of $\Delta(I,T_1)$, we conclude that $f(I,T_2)>0$. 
\end{proof}
The path $T_2$ in the above construction is not necessarily unique.

The following result was proven in \cite{heiny:mikosch:2017:corr} with considerable technical effort. We provide a simple proof using graph theory.
\begin{lemma}\label{lem:9.5}
  Let $I\in \mathcal{C}_{r,k}$.  
  For any $T \in \iv{n}^k$ such that $f(I,T)>0$ we have $\# \{T\}\le k-r+1$. 
\end{lemma}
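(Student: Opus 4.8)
The plan is to play two bounds on $N_e(I,T)$, the number of edges of the skeleton $\Delta^0(I,T)$, against each other: a lower bound from connectedness of the skeleton, and an upper bound forced by the constraint that $f(I,T)>0$.

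First I would set up the vertex and edge count of $\Delta(I,T)$. By construction it carries $2k$ edges counted with multiplicity, namely a down-edge $(i_\ell,t_\ell)$ and an up-edge $(t_\ell,i_{\ell+1})$ for each $\ell\in\iv{k}$; equivalently, writing $m_e(I,T)$ for the degree of a distinct edge $e$, one has $\sum_e m_e(I,T)=2k$. Its vertex set is $\{I\}\cup\{T\}$, a disjoint union (row indices live in $\iv{p}$, column indices in $\iv{n}$), and since $I$ is an $r$-path this has exactly $r+\#\{T\}$ elements. The skeleton $\Delta^0(I,T)$ has the same vertex set and, as recorded in the setup preceding the lemma, is connected; hence it has at least $r+\#\{T\}-1$ edges, i.e.
\[
N_e(I,T)\ \ge\ r+\#\{T\}-1\,.
\]

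Next I would invoke the hypothesis $f(I,T)>0$. By the factorization \eqref{eq:fI2} together with property (1) (symmetry of the entries), if some edge of $\Delta(I,T)$ had odd degree, then an odd power of a $Y_{it}$ would occur in one of the row-factors and $f(I,T)$ would vanish. Thus every $m_e(I,T)$ is even, hence $m_e(I,T)\ge 2$, and summing over the distinct edges gives $2k=\sum_e m_e(I,T)\ge 2N_e(I,T)$, i.e. $N_e(I,T)\le k$. Combining with the previous display yields $r+\#\{T\}-1\le k$, that is $\#\{T\}\le k-r+1$, as claimed.

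I do not expect a genuine obstacle here; the whole point is that the graph-theoretic reformulation makes the argument a two-line inequality chase. The only places needing (minor) care are the bookkeeping already settled in the text: that $\{I\}$ and $\{T\}$ are disjoint as vertex sets, so the vertex count of $\Delta^0(I,T)$ is exactly $r+\#\{T\}$, and that $\Delta^0(I,T)$ is connected, so that the edge count is at least $(\text{\#vertices})-1$.
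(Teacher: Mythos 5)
Your argument is correct and coincides with the paper's own proof: both use that $f(I,T)>0$ forces every edge of $\Delta(I,T)$ to have degree at least two (hence $N_e(I,T)\le k$ from the total of $2k$ edge incidences), and that the connected skeleton on the $r+\#\{T\}$ vertices has at least $r+\#\{T\}-1$ edges. Nothing further is needed.
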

\begin{proof}
  Let $I$ be a canonical $r$-path of length $k$. $f(I,T)>0$ implies
  that each edge of the $\Delta(I,T)$ needs to appear at least twice
  which in turn implies that $N_e(I,T)$,  the number of edges of the
  skeleton $\Delta^0(I,T)$,
  is at most $k$. Because  $\Delta^0(I,T)$ is connected with   $r+ \#  \{T\}$ 
  vertices, we have
  \[  r+ \#  \{T\} \le N_e(I,T)+1 \le k+1.
  \]
\end{proof}
\begin{remark}
We note that by Lemma 3.4 in \cite{bai:silverstein:2010} there exist such $\Delta(I,T)$ graphs with $f(I,T)>0$ and $N_e(I,T)=k$. In fact, for every $I\in \mathcal{C}_{r,k}^0$, there exists a unique $T\in \mathcal{C}_{k-r+1,k}$ with this property; see the construction in \cite{bai:silverstein:2010} for details. Hence, the inequality $\# \{T\}\le k-r+1$ is sharp. Moreover, Lemma \ref{lem:refinement} then implies that for $1\le s\le k-r$ we can find at least one $T\in \mathcal{C}_{s,k}$ such that $f(I,T)>0$ and $\Delta^0(I,T)$ is a tree.
\end{remark}

Define a function $g$ by $g(\emptyset)=1$ and
\begin{equation}\label{eq:defmaxg}
g(I)= \max_{T \in \iv{n}^k}\{ \# \{T\}\,:\, f(I,T)>0  \}\,,\qquad I \in \iv{p}^k\,.
\end{equation}
Lemma~\ref{lem:9.5} can be formulated in terms of the function $g$ as follows.
\begin{lemma}\label{lem:g}
For any $I \in \mathcal{C}_{r,k}$ it holds $g(I)\le k-r+1$ with equality if and only if $I \in \mathcal{C}_{r,k}^0$.
\end{lemma}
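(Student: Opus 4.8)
The plan is to deduce Lemma~\ref{lem:g} directly from Lemma~\ref{lem:9.5} together with the characterization of $\mathcal{C}_{r,k}^0$ and the refinement argument of Lemma~\ref{lem:refinement}. The inequality $g(I)\le k-r+1$ is just a restatement of Lemma~\ref{lem:9.5}, since $g(I)$ is by definition the maximum of $\#\{T\}$ over all $T$ with $f(I,T)>0$. So the real content is the equality case, which I would split into the two implications.

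For the ``if'' direction, assume $I\in\mathcal{C}_{r,k}^0$. I want to exhibit a single $T\in\iv{n}^k$ with $f(I,T)>0$ and $\#\{T\}=k-r+1$. This is exactly the content of the remark following Lemma~\ref{lem:9.5}: the construction in \cite[Section~2.1.2 / Lemma~3.4]{bai:silverstein:2010} produces, for every completely reducible canonical $r$-path $I$, a path $T\in\mathcal{C}_{k-r+1,k}$ whose $\Delta$-graph has skeleton a tree with $k$ edges (each edge of degree exactly $2$), and such a $\Delta(I,T)$ has $f(I,T)>0$ by the moment formula \eqref{moment} (all edge degrees even and positive, so $\theta(I,T)>0$ in \eqref{eq:orderfit}). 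Hence $g(I)\ge k-r+1$, and combined with Lemma~\ref{lem:9.5} this forces $g(I)=k-r+1$.

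For the ``only if'' direction, I would argue the contrapositive: if $I\in\mathcal{C}_{r,k}$ but $I\notin\mathcal{C}_{r,k}^0$, then $g(I)\le k-r$. The idea is that $I\notin\mathcal{C}_{r,k}^0$ means $S(I)\ne\emptyset$, so $I$ contains a vertex appearing at least twice that survives all reductions — equivalently, its associated $r$-partition is not the ``MP-type'' partition that realizes the extremal tree. Concretely, suppose for contradiction that there is a $T$ with $f(I,T)>0$ and $\#\{T\}=k-r+1$. Then, as in the proof of Lemma~\ref{lem:9.5}, $r+\#\{T\}=k+1\le N_e(I,T)+1\le k+1$ forces $N_e(I,T)=k$, so $\Delta^0(I,T)$ is a spanning tree with exactly $k$ edges, and since $\Delta(I,T)$ has $2k$ edge-slots total with every edge appearing $\ge 2$ times, every edge has degree exactly $2$: the $\Delta$-graph is a ``double tree''. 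I would then invoke the standard fact (from the MP graph-counting theory in \cite{bai:silverstein:2010}) that a pair $(I,T)$ whose $\Delta$-graph is a double tree forces $I$ to be completely reducible — intuitively, a double tree can be traversed by a closed walk that backtracks along each edge, and collapsing those backtracks is exactly the path-shortening algorithm reducing $I$ to $\emptyset$. This contradicts $I\notin\mathcal{C}_{r,k}^0$.

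The main obstacle is making that last step — ``double tree $\Delta$-graph $\Longrightarrow$ $I$ completely reducible'' — fully rigorous rather than hand-wavy. One clean route: show that if $\Delta(I,T)$ is a double tree, then $I$ must contain a run or a simple vertex (a leaf of the tree on the $I$-side, traversed back-and-forth, produces a run $i_\ell=i_{\ell+1}$; a leaf on the $T$-side produces a simple $i$-vertex or an eliminable pattern), apply the corresponding Type-I or Type-II reduction, check that the reduced pair still has a double-tree $\Delta$-graph, and induct on $k$; the base case $k$ small is immediate. Alternatively one can cite \cite{heiny:mikosch:2017:corr} where the relation between completely reducible paths and the extremal graphs is essentially established. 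I would present the inductive leaf-peeling argument since it also reuses the reduction machinery already set up in Section~\ref{sec:ps}, keeping the proof self-contained.
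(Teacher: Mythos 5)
Your proposal is correct, and for the only part that genuinely needs an argument it does more than the paper writes down. In the paper, Lemma~\ref{lem:g} is offered as a mere reformulation of Lemma~\ref{lem:9.5}: the inequality is Lemma~\ref{lem:9.5} itself, and attainment of equality for $I\in\mathcal{C}_{r,k}^0$ is the content of the remark preceding the lemma, which delegates the extremal construction of a $T\in\mathcal{C}_{k-r+1,k}$ with $f(I,T)>0$ to Lemma~3.4 of \cite{bai:silverstein:2010}; the converse implication (equality forces $I\in\mathcal{C}_{r,k}^0$) is left implicit, as part of the classical \MP graph classification. You treat the inequality and the ``if'' direction exactly as the paper does, but then supply a self-contained proof of the ``only if'' direction: equality forces $N_e(I,T)=k$, hence a tree skeleton in which every edge has degree exactly $2$ (a double tree), and a leaf-peeling induction using the Type-I/Type-II reductions of Section~\ref{sec:ps} shows that such an $I$ is completely reducible. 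That is a sound and more explicit route than the paper's.

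One correction to your sketch: the two leaf cases are interchanged. In a double tree, a leaf on the \emph{$T$-side} is visited exactly once and its down- and up-edges coincide, which forces $i_\ell=i_{\ell+1}$, i.e.\ a run (Type-I); a leaf on the \emph{$I$-side} has only two incident edge-traversals, hence appears exactly once in $I$ and is a simple vertex (Type-II), and its two $T$-neighbours in the walk coincide, so the Type-II step keeps the reduced pair a double tree. To close the induction in a way that matches the algorithm's output $S(I)$ (and not just ``some'' reduction sequence), note in addition that in a double tree every run of $I$ must occur at a $T$-leaf (the two traversals of the corresponding edge are consecutive, so the subtree beyond that edge is empty) and every simple vertex of $I$ must be an $I$-leaf (otherwise some edge would have odd degree); hence \emph{any} applicable reduction preserves the double-tree property, a nonempty path with a double-tree partner always admits a reduction since a tree on at least two vertices has at least two leaves, and the PSA can therefore only terminate at the empty path, giving $S(I)=\emptyset$, i.e.\ $I\in\mathcal{C}_{r,k}^0$. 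With the labels fixed, your argument is complete.
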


\subsection{Finding $F(I)$}

Throughout this subsection, let $I\in \mathcal{C}_{r,k}$ with $1 \le r \le k$ and assume the conditions of Theorem \ref{thm:mainsimplified}.

Since $f(I,T_1)=f(I,T_2)$ if $T_1$ and $T_2$ are isomorphic we may
sort, analogously to \eqref{eq:okljjj},  also according to the number of distinct elements in $T$. An application of Lemma \ref{lem:g} then shows as $\nto$,
\begin{equation}\label{eq:abc}
\begin{split}
p^{r-1} F(I) &=  \sum_{s=1}^k \sum_{T\in \mathcal{C}_{s,k}} p^{r-1} n (n-1) \cdots (n-s+1) f(I,T)\\
&\sim \sum_{s=1}^{g(I)} \sum_{T\in \mathcal{C}_{s,k}} p^{r-1} n^s f(I,T)\,.
\end{split}
\end{equation}

It turns out that the quantity $N_e(I,T)$ is crucial for the order of
$f(I,T)$.  Recall that by \eqref{eq:orderfit}, $f(I,T)>0$ will be of highest order
if $N_e(I,T)$ is minimal.
We have seen in the proof of Lemma~\ref{lem:9.5} that for $T\in
\mathcal{C}_{s,k}$, $N_e(I,T)$ attains its minimum if and only if
$\Delta^0(I,T)$ is a tree.  Because  $N_e(I,T)\ge r+s-1$, we obtain
\begin{eqnarray}
p^{r-1} n^s f(I,T)&=&O(n^{r+s-1-N_e(I,T)}) \notag \\ &=&
\left\{\begin{array}{ll}
O(1) \,, & \mbox{if } \Delta^0(I,T) \text{ is a tree and } f(I,T)>0 \,,\\
O(n^{-1}) \,, & \mbox{otherwise }.
\end{array}\right. \label{eq:sega}
\end{eqnarray}
For $I \in \mathcal{C}_{k}$ set 
\begin{equation}\label{eq:CI}
\mathcal{C}_{s,k}(I) =\{T\in \mathcal{C}_{s,k} : \Delta^0(I,T) \text{ is a tree and all edges of } \Delta(I,T) \text{ possess even degrees} \}\,.
\end{equation}
Note that $\mathcal{C}_{1,k}(I)=\{ (1,\ldots,1)\}$, while for $s\ge 2$ the set $\mathcal{C}_{s,k}(I)$ might be empty. 
Thanks to \eqref{eq:sega}, \eqref{eq:abc} simplifies to 
\begin{equation*}
p^{r-1} F(I) \sim \sum_{s=1}^{g(I)} \sum_{T\in \mathcal{C}_{s,k}(I)} p^{r-1} n^s f(I,T)\,.
\end{equation*}
By virtue of Lemma~\ref{lem:refinement} we know that 
\begin{equation}\label{prop00}
\mathcal{C}_{s,k}(I)=\emptyset \quad  \text{ implies } \quad \mathcal{C}_{s+1,k}(I)=\emptyset\,, \quad s\ge 2\,.
\end{equation}
 This means that the upper summation bound $g(I)$ can be further reduced. Property \eqref{prop00} is particularly useful in computations because many sets $\mathcal{C}_{s,k}(I)$ do not have to be constructed from their definition \eqref{eq:CI} to know that they are empty. Also one can start by building the sets $\mathcal{C}_{2,k}(I), \mathcal{C}_{3,k}(I),\ldots$, i.e., the ones with the fewest number of vertices, first. %In table ? we see that for most $I$ one has $\mathcal{C}_{2,k}(I)=\emptyset$. 

\begin{remark}
In fact, by exhaustive enumeration we know that
$\mathcal{C}_{2,k}(I)=\emptyset$ for all irreducible $I\in
\mathcal{C}_{r,k}^1$ with length  $|I|\le 8$. This combined with the path-shortening algorithm leads to such tremendous simplifications (compared with a brute force computation), that $\E[m_k]$ can be calculated by hand in reasonable time for small $k$. If the reader wants to try, we recommend to focus on the cases $k=4,5$ when there exists only one irreducible path.
\end{remark}

Assume we have already constructed $\mathcal{C}_{s,k}(I)$ and that it is nonempty. As long as $s<g(I)$, it is possible that the next set $\mathcal{C}_{s+1,k}(I)$ is nonempty. Fortunately, the proof of Lemma~\ref{lem:refinement} provides an explicit  construction of  potential paths in $\mathcal{C}_{s+1,k}(I)$ as paths corresponding to $1$-refinements of partitions of paths in $\mathcal{C}_{s,k}(I)$. In other words, any $T_1\in \mathcal{C}_{s+1,k}(I)$ is the path generated by some $1$-refinement of $\partition(T_2)$ for some $T_2\in \mathcal{C}_{s,k}(I)$.

As regards to the task of determining the sets $\mathcal{C}_{s,k}(I)$, the worst possible scenario happens when $g(I)=k-r+1$, or equivalently $I\in \mathcal{C}_{r,k}^0$. In this situation, $\mathcal{C}_{k-r+1,k}(I)\neq \emptyset$ and hence all other sets too are nonempty. Fortunately, in this situation Lemma~\ref{prop:PSI} gives $F(I)=n^{1-r}$ so that \eqref{eq:abc} is superfluous. 

We summarize the preliminary results of this subsection in the following statement. 
For any $I\in \mathcal{C}_{r,k}$  one has as $\nto$,
\begin{equation}\label{eq:aaa}
p^{r-1} F(I) \sim \sum_{s=1}^{t^{\star}(I)} \sum_{T\in \mathcal{C}_{s,k}(I)} p^{r-1} n^s f(I,T)
\end{equation}
with $t^{\star}(I)=\min \{1\le s \le g(I) : \mathcal{C}_{s+1,k}(I)= \emptyset \}$. All the terms in the sum on the \rhs~ of \eqref{eq:aaa} are of order $O(1)$.

It remains to provide an explicit formula for the limit of $p^{r-1} n^s f(I,T)$. From \eqref{eq:fI2} we get 
\begin{equation}\label{eq:eraw}
  f(I,T) 
   =  \prod_{i=1}^r \E\Big[ \prod_{t\in \mathcal{T}_i(I,T)}  Y_{it}^{m_{it}(I,T)} \Big] \,,
 \end{equation}
where $m_{it}(I,T)$ is the degree of edge $(i,t)\in \Delta(I,T)$ and
$\mathcal{T}_i(I,T)$ denotes the set of neighbours of an $I$-vertex $i$, i.e., $\mathcal{T}_i(I,T)=\{t\in \{T\}: (i,t) \in \Delta(I,T)\}$.
Let $d_i(I,T)= \# \mathcal{T}_i(I,T) $ be the degree of $i$ (in $\Delta(I,T)$).
By \eqref{moment} we have as $\nto$,
\begin{equation}\label{moment1}
  \E\Big[ \prod_{t\in \mathcal{T}_i(I,T)}  Y_{it}^{m_{it}(I,T)} \Big]
  \sim 
  \frac{ \Gamma(d_i)}{n^{d_i}} \frac{
    \Big(\frac{\alpha}{2}\Big)^{d_i-1} \prod_{t\in \mathcal{T}_i} \Gamma(\tfrac{m_{it}-\alpha}{2})}{  \Big(\Gamma(1-\alpha/2)\Big)^{d_i} \Gamma(N_i)}\,,
\end{equation}
where $N_i=N_i(I)$ counts the number of appearances of the integer $i$
in the path $I$. Here and below the dependence on $(I,T)$ is sometimes
removed in the notations for the sake of clarity when no ambiguity
is possible.
Since $\Delta^0(I,T)$ is a tree, it follows that 
\begin{equation}\label{eq:nerr}
\sum_{i=1}^r d_i(I,T) = N_e(I,T) =r+s-1\,.
\end{equation}
Thanks to \eqref{eq:eraw}, \eqref{moment1} and \eqref{eq:nerr}, one sees that
\begin{equation}\label{eq:edgfe}
\begin{split}
  p^{r-1} n^s f(I,T) &\sim \Big(\frac{p}{n} \Big)^{r-1} \Big(\frac{\alpha}{2} \Big)^{s-1} \Big(\Gamma(1-\alpha/2)\Big)^{-(r+s-1)}\\
  & \qquad \prod_{i=1}^r \frac{\Gamma(d_i)}{\Gamma(N_i)}
  \prod_{(i,t)\in \Delta(I,T)} \Gamma\Big(\frac{m_{it}-\alpha}{2}\Big)\,.
\end{split}
\end{equation}

A combination of \eqref{eq:aaa} and \eqref{eq:edgfe} proves the following result.
\begin{proposition}\label{prop:calcF}
  Assume the conditions of Theorem \ref{thm:mainsimplified}. For any $I\in \mathcal{C}_{r,k}$  one has 
\begin{equation}\label{FI}
\begin{split}
  \lim_{\nto} p^{r-1} F(I) &= \Big(\frac{\gamma}{\Gamma(1-\alpha/2)} \Big)^{r-1} \frac{2}{\alpha}
  \sum_{s=1}^{t^{\star}(I)} \Big( \frac{\alpha/2}{\Gamma(1-\alpha/2)} \Big)^s
  \\
  & \qquad \sum_{T\in \mathcal{C}_{s,k}(I)} \left(\prod_{i=1}^r \frac{\Gamma(d_i)}{\Gamma(N_i)} \right)
\prod_{(i,t)\in \Delta(I,T)} \Gamma\Big(\frac{m_{it}-\alpha}{2}\Big)\,,
\end{split}
\end{equation}
where
\begin{eqnarray*}
  \mathcal{C}_{s,k}(I) &=&\{T\in \mathcal{C}_{s,k} : \Delta^0(I,T) \text{ has } \# \{I\}+s-1 \text{ edges } (i,t) \text{ with even degrees } m_{it}(I,T) \},\\
  t^{\star}(I) &=&\min \{1\le s \le g(I) : \mathcal{C}_{s+1,k}(I)= \emptyset \}.
\end{eqnarray*}
\end{proposition}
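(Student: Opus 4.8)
The plan is to obtain \eqref{FI} by feeding the termwise asymptotics \eqref{eq:edgfe} into the reduction \eqref{eq:aaa}; since all the structural work has already been done in Lemmas~\ref{lem:refinement}--\ref{lem:g}, what is left is a finite-sum limit interchange together with one algebraic regrouping of constants.

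First I would check that the description of $\mathcal{C}_{s,k}(I)$ in the statement agrees with \eqref{eq:CI}. The skeleton $\Delta^0(I,T)$ is connected on $\#\{I\}+\#\{T\}=r+s$ vertices, hence has at least $r+s-1$ edges, with equality exactly when it is a tree; so "$\Delta^0(I,T)$ has $\#\{I\}+s-1$ edges'' is the same as "$\Delta^0(I,T)$ is a tree''. Since $\Delta^0(I,T)$ and $\Delta(I,T)$ carry the same edge set, requiring all those edges to have even degree $m_{it}$ is the same as requiring all edges of $\Delta(I,T)$ to have even degree, and by \eqref{eq:fI2} and positivity of even moments of the $Y_{it}$ this forces $f(I,T)>0$; conversely property (1) of $\Y$ kills any $T$ with an odd-degree edge. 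So both descriptions pick out the same set.

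Next, from \eqref{eq:aaa} the quantity $p^{r-1}F(I)$ is asymptotically a sum of at most $\sum_{s\le k}\#\mathcal{C}_{s,k}$ terms, a number independent of $n$, each of order $O(1)$ by \eqref{eq:sega}; hence it suffices to take the limit of each summand separately and then add. For a fixed $T\in\mathcal{C}_{s,k}(I)$, equation \eqref{eq:edgfe} --- which rests on the row factorisation \eqref{eq:eraw}, the single-row asymptotics \eqref{moment1} coming from the Albrecher--Teugels formula \eqref{moment}, and the tree identity \eqref{eq:nerr}, $\sum_{i=1}^r d_i=r+s-1$ --- together with $p/n\to\gamma$ from \eqref{Cgamma} gives
\begin{equation*}
\lim_{\nto}p^{r-1}n^s f(I,T)=\gamma^{r-1}\Big(\tfrac{\alpha}{2}\Big)^{s-1}\big(\Gamma(1-\alpha/2)\big)^{-(r+s-1)}\prod_{i=1}^r\frac{\Gamma(d_i)}{\Gamma(N_i)}\prod_{(i,t)\in\Delta(I,T)}\Gamma\Big(\frac{m_{it}-\alpha}{2}\Big).
\end{equation*}

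Finally I would regroup the scalar prefactor using $\big(\tfrac{\alpha}{2}\big)^{s-1}=\tfrac{2}{\alpha}\big(\tfrac{\alpha}{2}\big)^{s}$, which yields
\begin{equation*}
\gamma^{r-1}\Big(\tfrac{\alpha}{2}\Big)^{s-1}\big(\Gamma(1-\alpha/2)\big)^{-(r+s-1)}=\Big(\frac{\gamma}{\Gamma(1-\alpha/2)}\Big)^{r-1}\frac{2}{\alpha}\Big(\frac{\alpha/2}{\Gamma(1-\alpha/2)}\Big)^{s},
\end{equation*}
a factor independent of $T$ that pulls out of the sum over $\mathcal{C}_{s,k}(I)$. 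Summing the limits over $T\in\mathcal{C}_{s,k}(I)$ and then over $s=1,\dots,t^{\star}(I)$ reproduces \eqref{FI}. As for the main obstacle, there is essentially none left at this stage: the genuinely hard facts --- that non-tree skeletons contribute only $O(n^{-1})$ (\eqref{eq:sega}), the sharp bound $g(I)\le k-r+1$ (Lemma~\ref{lem:9.5}), and the refinement monotonicity \eqref{prop00} legitimising truncation at $t^{\star}(I)$ --- were already settled earlier in the section; the only delicate point is that the chain of "$\sim$'' relations composes into a genuine limit, which is fine precisely because the number of summands stays bounded as $\nto$.
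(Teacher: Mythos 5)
Your proposal is correct and follows essentially the same route as the paper, which obtains Proposition~\ref{prop:calcF} precisely by combining \eqref{eq:aaa} with the termwise asymptotics \eqref{eq:edgfe} and regrouping the constants. Your additional checks (that the edge-count condition in $\mathcal{C}_{s,k}(I)$ is equivalent to the tree condition of \eqref{eq:CI}, and that the finitely many $O(1)$ summands justify the termwise passage to the limit) simply make explicit what the paper leaves implicit.
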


\subsection{Some examples}\label{sec:2.5}

To better understand Proposition \ref{prop:calcF} and its notation, we provide some examples.

\begin{example}\label{ex:1212}
Consider the path $I=(1,2,1,2)$. We discuss various ways of calculating $p\, F(I)$. 
First, a direct calculation using the symmetry of $Y_{it}$ shows that
\begin{equation*}
\begin{split}
F(1,2,1,2)&=
\sum_{t_1,\ldots,t_4=1}^n \E[ Y_{1t_1}Y_{1t_2}Y_{1t_3}Y_{1t_4} 
Y_{2t_1}Y_{2t_2}Y_{2t_3}Y_{2t_4}]\\
&= \sum_{t_1,\ldots,t_4=1}^n (\E[ Y_{1t_1}Y_{1t_2}Y_{1t_3}Y_{1t_4} ])^2\\
&= \sum_{t_1=1}^n (\E[ Y_{1t_1}^4])^2 + 3 \sum_{t_1\neq t_2=1}^n (\E[ Y_{1t_1}^2Y_{1t_2}^2 ])^2\\
&\sim  \frac{1}{n} (n\E[ Y_{11}^4])^2 \sim  \frac{1}{n} (1-\alpha/2)^2\,.
\end{split}
\end{equation*}
Hence, $\lim_{\nto}  p\, F(I)= \gamma (1-\alpha/2)^2$.

Next, we are going to apply Proposition \ref{prop:calcF}. By construction, $\mathcal{C}_{1,4}(I)=\{(1,1,1,1)\}$. It is easily checked that $\mathcal{C}_{2,k}(I)=\emptyset$ which implies $t^{\star}(I)=1$. The edges of $\Delta(I,(1,1,1,1))$ have degree $4$. Therefore we have 
\begin{equation*}
\begin{split}
\lim_{\nto} p^{r-1} F(I) &= \frac{\gamma}{\Gamma(1-\alpha/2)}  \frac{2}{\alpha}
  \frac{\alpha/2}{\Gamma(1-\alpha/2)} 
  \Big(\frac{\Gamma(1)}{\Gamma(2)}\Big)^2
\Big( \Gamma\big(\tfrac{4-\alpha}{2}\big)\Big)^2=\gamma (1-\alpha/2)^2\,.
\end{split}
\end{equation*}
\end{example}

For longer paths a combination of path-shortening and Proposition \ref{prop:calcF} is useful.
\begin{example}
Consider the path $I=(1,1,2,1,3,3,4,5,4,6,7,7,3,8,9,8,6,1)\in \mathcal{C}_{9,18}$. A direct calculation as in Example \ref{ex:1212} would be quite tedious. Using the path-shortening Lemma \ref{prop:PSI} we obtain
 \begin{equation*}
\begin{split}
p^8 F(I)&=p^8 F(S(I))\, n^{-\simples(I)} = p^8 F(3,6,3,6)\, n^{-7} \\
&= \Big(\frac{p}{n}\Big)^7 p F(1,2,1,2) \to \gamma^8 (1-\alpha/2)^2\,.
\end{split}
\end{equation*}
\end{example}

Finally, we want to provide nontrivial examples of the sets $\mathcal{C}_{s,k}(I)$. By nontrivial we mean $\mathcal{C}_{2,k}(I)\neq \emptyset$ for which $k$ is required to be at least $9$.  The defining properties are checked by counting the number of edges of the $\Delta(I,T)$ graphs and their degrees. 

 \begin{example}\label{ex:4.9}
\begin{figure}[h!]
  \centering
  \begin{minipage}{0.45\linewidth}
    \vskip1.95mm
    \includegraphics[width=\linewidth]{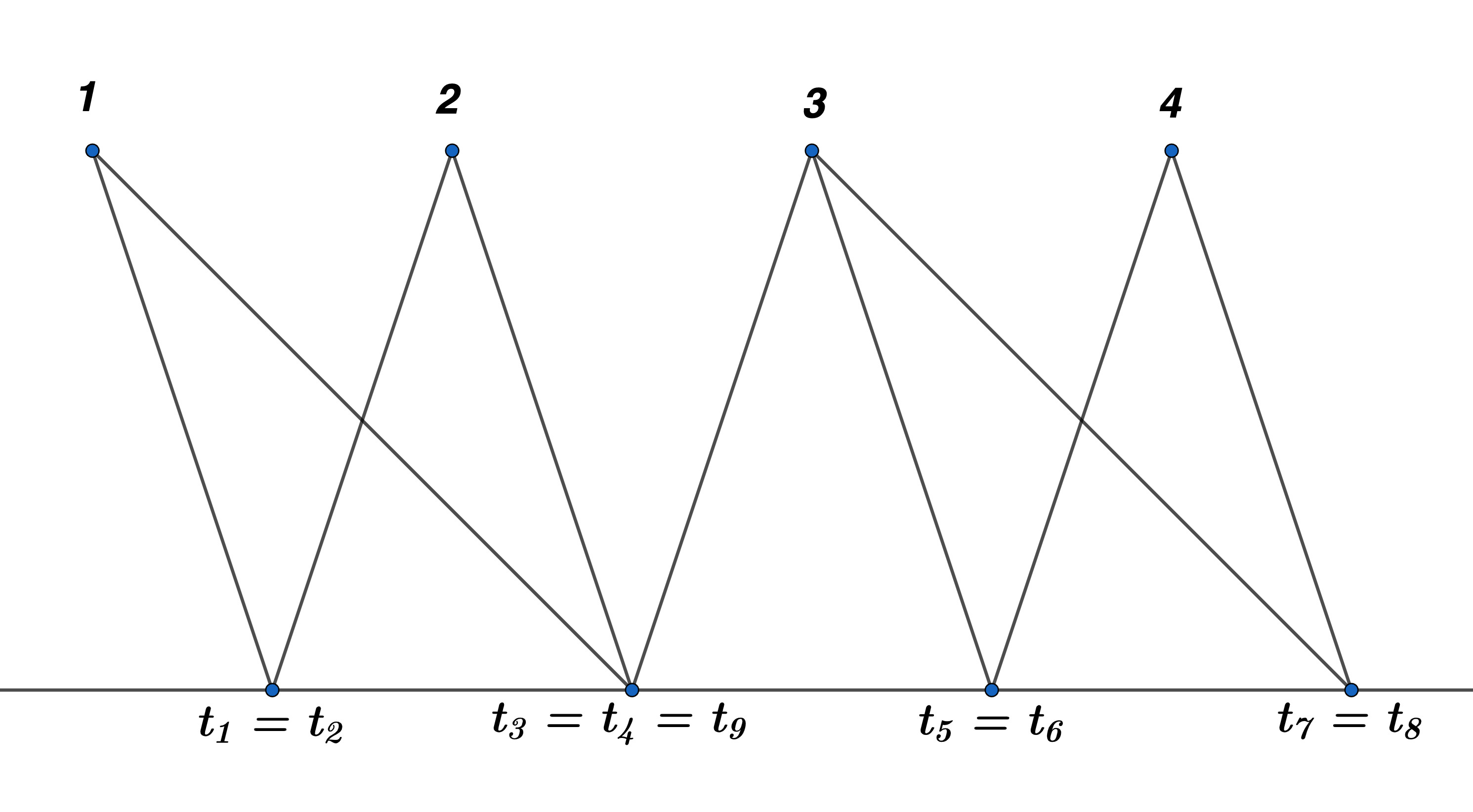} 
  \end{minipage}
  \quad
  \begin{minipage}{0.45\linewidth}
    \includegraphics[width=\linewidth]{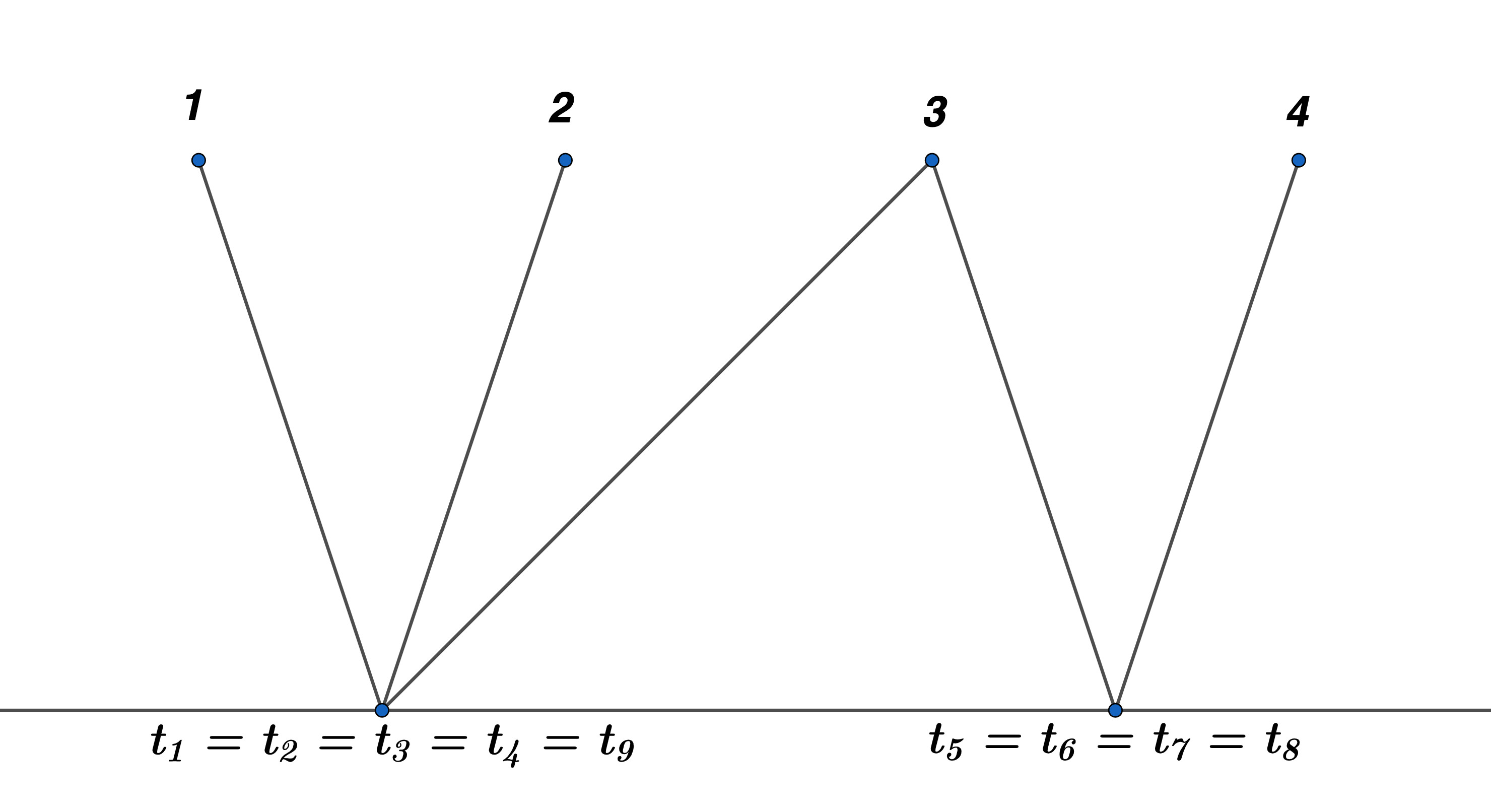} 
  \end{minipage}
  \caption{Graphs $\Delta^0(I,T_2)$ (left) and $\Delta^0(I,T_3)$
    (right) for $I=(1,2,1,2,3,4,3,4,3)$.}
	 \label{fig:plotex1}
\end{figure}

Our goal is to find $\mathcal{C}_{2,9}(I)$ for the irreducible path
$I=(1,2,1,2,3,4,3,4,3)$. We set $T=(t_1,t_2,\ldots, t_9)$ and list the
edges of the $\Delta(I,T)$ graph as follows. In the left column we
list the $I$-vertices, and the right column lists  respective
neighbours ($T$-vertices).
\begin{center}
  % \caption{Your first table.}
  \begin{tabular}{c| l | c |c} % <-- Alignments: 1st column left, 2nd middle and 3rd right, with vertical lines in between
    $I$-vertex $i$ & edges $(i, \cdot)$ & edge degrees even? & $\Delta^0(I,T)$ tree?\\
    \hline
    1 & $t_1, t_2,t_3,t_9$ & no & no\\
    2 & $t_1, t_2,t_3,t_4$ & no & \\
    3 & $t_5, t_6,t_7,t_8, t_4,t_9$ & no & \\
    4 & $t_5, t_6,t_7,t_8$ & no & 
  \end{tabular}
\end{center}
From the first two rows we deduce that $t_4=t_9$ is necessary to generate even edge degrees. Setting $T_1=(t_1,\ldots,t_8,t_4)$ and drawing a box $\boxed{\phantom{t}}$ around the edges with even degrees we obtain the table
\begin{center}
    \begin{tabular}{c| l | c |c} 
		$I$-vertex $i$ & edges $(i, \cdot)$ & edge degrees even? & $\Delta^0(I,T_1)$ tree?\\
		\hline
       1 & $t_1, t_2,t_3,t_4$ & no & no\\
			 2 & $t_1, t_2,t_3,t_4$ & no & \\
			 3 & $t_5, t_6,t_7,t_8, \boxed{t_4}$ & no & \\
       4 & $t_5, t_6,t_7,t_8$ & no & 
    \end{tabular}
  \end{center}
To ensure edge degrees $2$, we need to form two pairs in the quadruples $(t_1,t_2,t_3,t_4)$ and  $(t_5, t_6,t_7,t_8)$, respectively. There are $9$ possibilities. We show the characteristics of the graph for $t_1=t_2, t_3=t_4$ and $t_5=t_6, t_7=t_8$, so $T_2=(t_1,t_1,t_3,t_3,t_5,t_5,t_7, t_7,t_8,t_4)$, in the next table:
\begin{center}
    \begin{tabular}{c| l | c |c} 
		$I$-vertex $i$ & edges $(i, \cdot)$ & edge degrees even? & $\Delta^0(I,T_2)$ tree?\\
		\hline
       1 & $\boxed{t_1},\boxed{t_3}$ & yes & no\\
			 2 & $\boxed{t_1},\boxed{t_3}$ & yes & \\
			 3 & $\boxed{t_5},\boxed{t_7}, \boxed{t_3}$ & yes & \\
       4 & $\boxed{t_5},\boxed{t_7}$ & yes & 
    \end{tabular}
  \end{center}
From this table or the top panel of Figure~\ref{fig:plotex1} it is obvious that $\Delta^0(I,T_2)$ contains $2$ cycles. The only way to remove them and fulfil the tree requirement is to choose $t_1=t_3$ and $t_5=t_7$; see Figure~\ref{fig:plotex1} bottom. The other $8$ possibilities of building pairs ultimately lead to the same path structure. Hence, the canonical representative of $T_3=(t_1,t_1,t_1,t_1,t_5,t_5,t_5,t_5,t_1)$ is the only element of $\mathcal{C}_{2,9}(I)$, i.e.,
\begin{equation*}
\mathcal{C}_{2,9}(I)= \{(1,1,1,1,2,2,2,2,1)\}\,.
\end{equation*}
\end{example}

\subsection{Variance bound}\label{variancebound}

Assume the conditions of Theorem \ref{thm:mainsimplified}. We derive an upper bound for the variance of $m_k$.
For a path  $I=(i_1, i_2, \ldots, i_k)$ with vertices in $ \iv{p}$ 
and a path
$T=(t_1, i_2, \ldots, t_k)$ with vertices in $ \iv{n}$, we define 
\begin{equation}\label{eq:ftilde}
  \tilde f(I,T) = \tilde f_n(I,T) = 
  Y_{i_1t_1}  Y_{i_2t_1} Y_{i_2t_2}  Y_{i_3t_2} Y_{i_3t_3} \cdots  Y_{i_kt_k}    Y_{i_{k+1}t_k}\,.
	\end{equation}
	Then we have 
	\[  m_k =  \frac1p \sum_{I \in {\iv{p}}^k} \sum_{T \in \iv{n}^k} \tilde f(I,T)\,
\] 
and consequently the variance of $m_k$ can be written as
\begin{equation*}
\begin{split}
\Var(m_k)= \frac{1}{p^2} \sum_{I,J \in {\iv{p}}^k} \sum_{T_1,T_2 \in \iv{n}^k}
\Big(\E\big[\tilde f(I,T_1)  \tilde f(J,T_2)\big]- \E\big[\tilde f(I,T_1) \big] \E\big[ \tilde f(J,T_2)\big]\Big)\,.
\end{split}
\end{equation*}
If $\{I\}$, i.e. the set of distinct elements of $I$, and $\{J\}$ are disjoint, then $\tilde f(I,T_1)$ and $\tilde f(J,T_2)$ are independent which implies that $\E\big[\tilde f(I,T_1)  \tilde f(J,T_2)\big]- \E\big[\tilde f(I,T_1) \big] \E\big[ \tilde f(J,T_2)\big]=0$. Therefore we obtain the bound
\begin{equation}\label{eq:sdsdsdd}
\begin{split}
\Var(m_k)
&\le \frac{1}{p^2} \sum_{\substack{I,J \in {\iv{p}}^k\\ \{I\}\cap \{J\} \neq \emptyset}}  \sum_{T_1,T_2 \in \iv{n}^k}
\E\big[\tilde f(I,T_1)  \tilde f(J,T_2)\big] \\
&\le \frac{1}{p^2} \sum_{r=1}^{2k} \sum_{s=1}^{2k} p^r n^s \sum_{\substack{(I,J)\in \mathcal{C}_{r,2k} \\ \{I\}\cap \{J\} \neq \emptyset}}
 \sum_{(T_1,T_2)\in \mathcal{C}_{s,2k}} \E\big[\tilde f(I,T_1)  \tilde f(J,T_2)\big]\,,
\end{split}
\end{equation}
where we replaced each $(I,J)$ and $(T_1,T_2)$ by their canonical representatives in the last line.
 Analogously to \eqref{eq:orderfit}, the asymptotic behaviour of $\E\big[\tilde f(I,T_1)  \tilde f(J,T_2)\big]$ can be expressed in terms of the graph $\widetilde \Delta(I,J,T_1,T_2)$ which is is defined as the union of $\Delta(I,T_1)$ and $\Delta(J,T_2)$. That is, its set of vertices and edges is the union of the sets of vertices and edges, respectively, of $\Delta(I,T_1)$ and $\Delta(J,T_2)$. Since $\Delta(I,T_1)$ and $\Delta(J,T_2)$ are connected graphs we observe that $\widetilde \Delta(I,J,T_1,T_2)$ is a connected graph for all $T_1,T_2\in \iv{n}^k$ if and only if $\{I\}\cap \{J\} \neq \emptyset$. Thus, all the graphs $\widetilde \Delta(I,J,T_1,T_2)$ associated with $\E\big[\tilde f(I,T_1)  \tilde f(J,T_2)\big]$ in \eqref{eq:sdsdsdd} are connected. It suffices to consider $\widetilde \Delta(I,J,T_1,T_2)$ with even edge degrees since otherwise $\E\big[\tilde f(I,T_1)  \tilde f(J,T_2)\big]=0$.

By \eqref{moment} (compare also with \eqref{eq:orderfit}), we get 
\begin{equation}\label{eq:orderfit11}
  \E\big[\tilde f(I,T_1)  \tilde f(J,T_2)\big] \sim  \theta(I,J,T_1,T_2) \, n^{-N_e(I,J,T_1,T_2)} \,, \quad \nto\, ,
\end{equation}
for some positive constant $\theta(I,J,T_1,T_2)$. Here $N_e(I,J,T_1,T_2)$ denotes the number of edges of $\widetilde \Delta^0(I,J,T_1,T_2)$, the skeleton of $\widetilde \Delta(I,J,T_1,T_2)$.

Now, if $(I,J)\in \mathcal{C}_{r,2k}$, $\{I\}\cap \{J\} \neq \emptyset$ and $(T_1,T_2)\in \mathcal{C}_{s,2k}$, then $N_e(I,J,T_1,T_2)\ge r+s-1$ since $\widetilde \Delta^0(I,J,T_1,T_2)$ is a connected graph. In combination with \eqref{eq:sdsdsdd} and \eqref{eq:orderfit11}, this yields that 
\begin{equation*}
\Var(m_k) 
%\le \frac{1}{p^2} \sum_{r=1}^{2k} \sum_{s=1}^{2k} p^r n^s \sum_{(I,J)\in \mathcal{C}_{r,2k}} \sum_{(T_1,T_2)\in \mathcal{C}_{s,2k}} O(n^{-(r+s-1)})
= O(n^{-1})\,, \qquad \nto\,.
\end{equation*}

\section{Completion of the proof of Theorem~\protect\ref{thm:mainsimplified}}\label{sec:mainresult}

Recall a few  important notations that were introduced in Sections \ref{sec:1}-\ref{sec:F(I)}:
\begin{eqnarray*}
%  \Gamma &=& \text{gamma function}\,;\\
  \beta_k(\gamma) & = & \text{$k$-th \MP  moment (see \eqref{eq:momentsmp})}\, ;\\
%  \tI &=&  \text{canonical representative of } S(I)\,;\\
	\mathcal{C}_{r,k}&=& \{ \text{canonical $r$-paths of length $k$} \}\,;\\
%  \mathcal{C}_{r,k}^{(q)} &=& \{I\in \mathcal{C}_{r,k} : \simples(I)=q \};\\
  \mathcal{C}_{s,k}(I) &=&\{T\in \mathcal{C}_{s,k} : \Delta^0(I,T) \text{ has } \#\{I\}+s-1 \text{ edges } (i,t) \text{ with even degrees } m_{it}(I,T) \};\\
  t^{\star}(I) &=&\min \{1\le s \le k-\#\{I\} : \mathcal{C}_{s+1,k}(I)= \emptyset \};\\
  N_i(I) &=& \text{number of appearances of the vertex } i \text{ in the path } I\,;\\
  d_i(I,T) &=& \text{number of neighbours ($T$-vertices) of an
               $I$-vertex $i$ in  $\Delta(I,T)$}.
\end{eqnarray*}
To shorten notation, we will write $\tI$ for the canonical representative of $S(I)$. Finally, we define the sets 
\begin{equation*}
\mathcal{C}_{r,k}^{(q)} = \{I\in \mathcal{C}_{r,k} : 0\le \simples(I)=q \le r-2\}\,,\qquad 0\le q \le r-2\,.
\end{equation*}

%
%\begin{theorem}\label{thm:main}
%Assume \eqref{Cgamma} and that the iid symmetrically distributed $(X_{ij})$ belong to the domain of attraction of a stable law with index $\alpha \in (0,2)$. Then, as $\nto$, the ESDs $F_{\bfR_n}$ converge weakly in probability to the $\alpha$-heavy MP law with parameter $\gamma$ which is uniquely characterized by its moment sequence $(\mu_k(\alpha,\gamma))_{k\ge 1}$, where

Now we complete the proof of Theorem~\protect\ref{thm:mainsimplified}
with the following formula for the $k$-th moments of the limiting
\ahmplaw\ $H_{\alpha,\gamma}$:
\begin{equation}\label{mainresult}
    \begin{split}
\mu_k(\alpha,\gamma) &= \beta_k(\gamma) +  \frac{2}{\alpha} \sum_{r=2}^{k-2} \gamma^{r-1} \sum_{q=0}^{r-2} (\Gamma(1-\alpha/2))^{-r+q+1} \\
& \quad \sum_{I\in \mathcal{C}_{r,k}^{(q)}} 
\sum_{s=1}^{t^{\star}(\tI)} \Big( \frac{\alpha/2}{\Gamma(1-\alpha/2)} \Big)^s
  \sum_{T\in \mathcal{C}_{s,|\tI|}(\tI)} \left(\prod_{i=1}^{r-q} \frac{\Gamma(d_i(\tI,T))}{\Gamma(N_i(\tI))} \right)\\
& \quad \prod_{(i,t)\in \Delta(\tI,T)} \Gamma\Big(\frac{m_{it}(\tI,T)-\alpha}{2}\Big)\,.
    \end{split}
\end{equation}
In the course of deduction we will also see that 
every path $\tI$ in \eqref{mainresult} lies in the set 
\begin{equation}\label{eq:setI}
  \bigcup_{s=2}^{\lfloor k/2 \rfloor} \bigcup_{\ell=4}^k \mathcal{C}_{s,\ell}^1\,.
\end{equation}
%\end{theorem}

To proceed, note that
  weak convergence in probability follows from
\begin{itemize}
\item[$(i)$] For all $k\ge 1$, $\lim_{\nto}
  \E[m_k]=\mu_k(\alpha,\gamma)$, ~  and
\item[$(ii)$] $\lim_{\nto}\Var(m_k)=0$\,,
\end{itemize}
where $m_k = \int x^k d F_\bfR (x)$. 
In Section \ref{variancebound}, we proved that $\Var(m_k)=O(n^{-1})$ as $\nto$ which implies $(ii)$. 

Starting in \eqref{eq:oklj}, we have shown over the course of Sections \ref{sec:moments} and \ref{sec:F(I)} that
\begin{equation}\label{eq:degf}
  \lim_{\nto} \E[m_k] =  \beta_k(\gamma) + \lim_{\nto} \sum_{r=1}^k   \sum_{I\in \mathcal{C}_{r,k}: S(I)\neq \emptyset} p^{r-1} n^{-\simples(I)} \, F(S(I)).
\end{equation}
Observe that the condition $0\le \simples(I) \le r-2$ is equivalent to $S(I)\neq \emptyset$. Using the notation $\wt I= S(I)$ and the definition of $\mathcal{C}_{r,k}^{(q)}$, we have 
\begin{equation}\label{eq:edr}
\begin{split}
\sum_{r=1}^k    \sum_{I\in \mathcal{C}_{r,k}: S(I)\neq \emptyset} p^{r-1} n^{-\simples(I)} \, F(S(I))
&= \sum_{r=2}^{k-2}  \sum_{q=0}^{r-2}  \Big(\frac{p}{n}\Big)^q \sum_{I\in \mathcal{C}_{r,k}^{(q)}} p^{r-q-1}  \, F(\tI)\,.
\end{split}
\end{equation}
The limit of $p^{r-q-1}  \, F(\tI)$ is then calculated via Proposition
\ref{prop:calcF}. This implies claim  $(i)$. 

Next, for $I\in \mathcal{C}_{r,k}$ we have $\tI \in \mathcal{C}_{r-\simples(I), |S(I)|}^1$ and therefore every path $\tI$ in \eqref{eq:edr} lies in the set 
$\bigcup_{s=2}^{\lfloor k/2 \rfloor} \bigcup_{\ell=4}^k \mathcal{C}_{s,\ell}^1$,
which is relatively small; see Section \ref{sec:counting} for details.

Finally, to ensure that the sequence of moments
$(\mu_k(\alpha,\gamma))_{k\ge 1}$ in \eqref{mainresult} uniquely determines a probability
distribution,
we check  the Carleman condition, that is
\begin{equation}\label{eq:carleman}
  \sum_{k\ge 1} (\mu_{2k}(\alpha,\gamma))^{-\frac1{2k}}=\infty\,.
\end{equation}
From \eqref{eq:degf} we have
\[ \beta_k(\gamma) \le \mu_k(\alpha,\gamma)\le \sum_{r=1}^k \gamma^{r-1} \# \mathcal{C}_{r,k}.
\]
By Lemma~\ref{lem:crk} and for $1\le r\le k$, 
\[ \# \mathcal{C}_{r,k} =  B(k,r) \le \frac12 \binom{k}{r}r^{k-r}
\le \frac12 \binom{k}{r}k^{k-r} ,
\]
where the first upper bound for $B(k,r)$ is well-known (see \cite{rennie:dobson:1969}). Therefore,
\[  \mu_k(\alpha,\gamma)\le \sum_{r=1}^k \gamma^{r-1}  \frac12
\binom{k}{r}k^{k-r}\le (2\gamma)^{-1} (\gamma+k)^k,
\]
and
\[  (\mu_{2k}(\alpha,\gamma))^{-\frac1{2k}} \ge
(2\gamma)^{\frac1{2k}}(\gamma+2k)^{-1}\sim \frac{1}{\gamma+2k} \,, \qquad k\to \infty\,.
\]
The Carleman  condition \eqref{eq:carleman} is satisfied.
The proof of Theorem~\ref{thm:mainsimplified} is complete.

\subsection{Computation of the limiting moments $\mu_k(\alpha,\gamma)$}\label{sec:k45}

Formula \eqref{mainresult} is explicit and requires some counting that can be implemented in mathematical software. For small values of $k$, it is feasible to evaluate \eqref{mainresult} without computing support. We find the first 5 moments $\mu_k(\alpha,\gamma)$. If $k=1,2,3$, we immediately get $\mu_k(\alpha,\gamma)=\beta_k(\gamma)$. Let us turn to $k\in \{4,5\}$. By \eqref{eq:setI}, we have 
\begin{equation*}
\tI \in\bigcup_{s=2}^{\lfloor k/2 \rfloor} \bigcup_{\ell=4}^k \mathcal{C}_{s,\ell}^1 = \mathcal{C}_{2,4}^1= \{(1,2,1,2)\}\,.
\end{equation*}
From Example \ref{ex:1212} we know that $t^{\star}(1,2,1,2)=1$. Since
$\mathcal{C}_{1,|\tI|}(\tI)=\{(1,1,1,1)\}$ it suffices to study the
graph $\Delta(\tI,T)$ with $T=(1,\ldots,1)$. From Figure~\ref{fig:ex5} one can see that,
\begin{equation*}
  \sum_{T\in \mathcal{C}_{1,|\tI|}(\tI)} \Big(\prod_{i=1}^{r-q} \underbrace{\frac{\Gamma(d_i(\tI,T))}{\Gamma(N_i(\tI))}}_{\Gamma(1)/\Gamma(2)} \Big) \prod_{(i,t)\in \Delta(\tI,T)} \underbrace{\Gamma\Big(\frac{m_{it}(\tI,T)-\alpha}{2}\Big)}_{\Gamma(1-\alpha/2)} = (1-\alpha/2)^2 (\Gamma(1-\alpha/2))^2\,.
\end{equation*}
\begin{figure}[htp]
  \centering
  \includegraphics[width=0.6\linewidth]{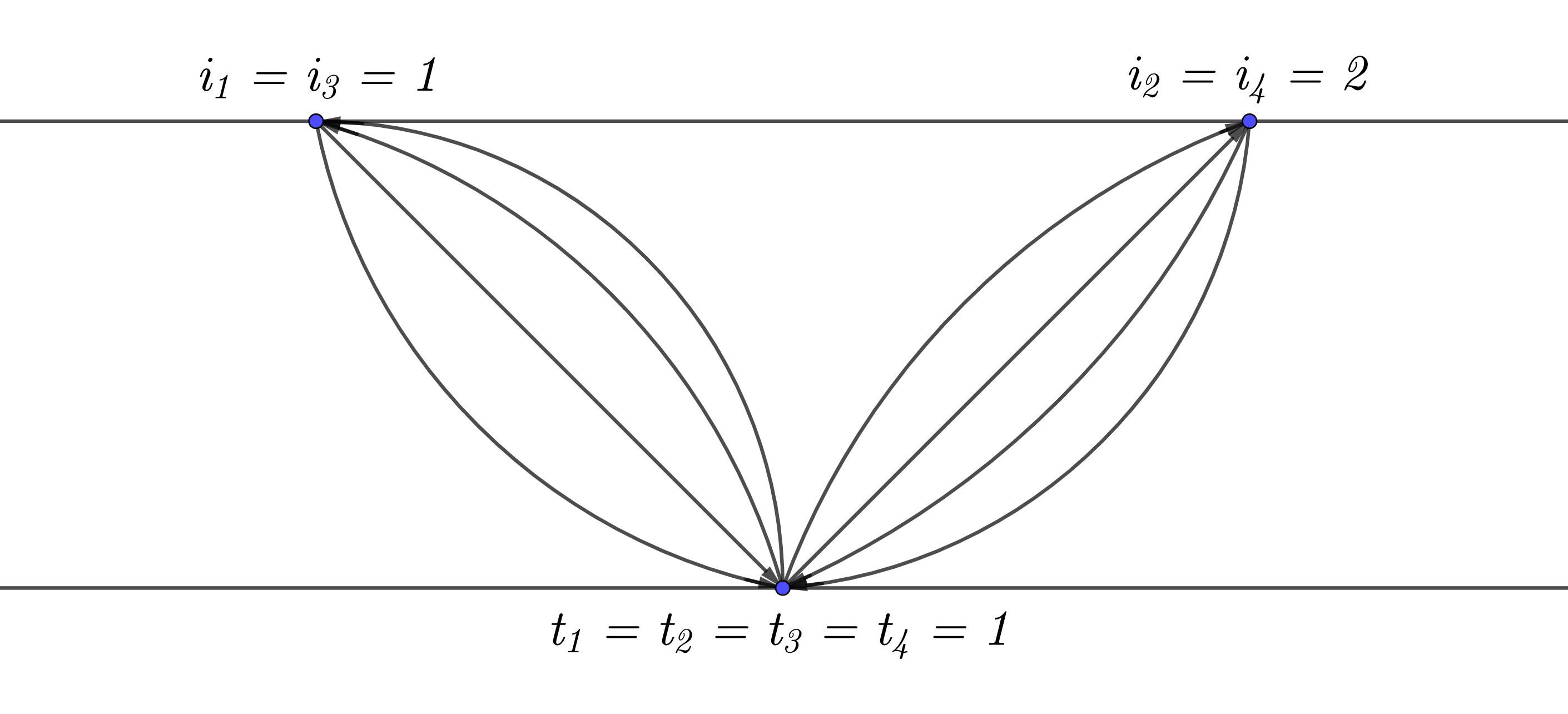}
  \caption{Graph $\Delta(\tI,T)$ with $\tI=(1,2,1,2)$ and
    $T=(1,1,1,1)$.}
  \label{fig:ex5}
\end{figure}

Hence, \eqref{mainresult} reads as
\begin{equation*}
\begin{split}
\mu_k(\alpha,\gamma) &= \beta_k(\gamma) +  \frac{2}{\alpha} \sum_{r=2}^{k-2} \gamma^{r-1} \sum_{q=0}^{r-2} (\Gamma(1-\alpha/2))^{-r+q+1} \\
& \quad \sum_{I\in \mathcal{C}_{r,k}^{(q)}} 
\frac{\alpha/2}{\Gamma(1-\alpha/2)} (1-\alpha/2)^2 (\Gamma(1-\alpha/2))^2\\
&= \beta_k(\gamma) + (1-\alpha/2)^2 \sum_{r=2}^{k-2} \gamma^{r-1} \sum_{q=0}^{r-2} (\Gamma(1-\alpha/2))^{-r+q+2} \,\#\mathcal{C}_{r,k}^{(q)}\,.
\end{split}
\end{equation*}
In view of $\mathcal{C}_{2,4}^{(0)}=\{(1,2,1,2)\}$, this gives 
\begin{equation*}
\mu_4(\alpha,\gamma)= \beta_4(\gamma) + (1-\alpha/2)^2 \gamma\,.
\end{equation*}
In order to find $\mu_5$, we need to construct the sets
\begin{equation*}
\mathcal{C}_{2,5}^{(0)}= \begin{Bmatrix}
(1,1,2,1,2),\\
(1,2,1,1,2),\\
(1,2,1,2,1),\\
(1,2,2,1,2),\\
(1,2,1,2,2)
\end{Bmatrix}\,, \quad
\mathcal{C}_{3,5}^{(0)}=\emptyset\,,
\quad \text{ and } \quad 
\mathcal{C}_{3,5}^{(1)} = \begin{Bmatrix}
(1,2,3,1,2),\\
(1,2,1,3,2),\\
(1,2,1,2,3),\\
(1,2,3,1,3),\\
(1,2,3,2,3)
\end{Bmatrix}\,.
\end{equation*}
This shows that 
\begin{equation*}
\mu_5(\alpha,\gamma)= \beta_5(\gamma) + (1-\alpha/2)^2 (5 \gamma + 5 \gamma^2)\,,
\end{equation*}
where we used that $\tI=(1,2,1,2)$ for $I\in \mathcal{C}_{2,5}^{(0)} \cup \mathcal{C}_{3,5}^{(1)}$.

\section{Proof of Theorem~\protect\ref{thm:limit} for the boundary cases}\label{sec:boundary} 

The proof of Theorem~\protect\ref{thm:limit} is decomposed into two
lemmas.

\begin{lemma}\label{lem:alpha02}
Let  $\mu_k(\alpha,\gamma)$ be defined in \eqref{mainresult}. For $k\ge 1$ it holds 
$$\lim_{\alpha \to 0^+} \mu_k(\alpha,\gamma) =\frac{1}{\gamma} \sum_{r=1}^{k} \gamma^{r} B(k,r) \quad \text{ and } \quad \lim_{\alpha \to 2^-} \mu_k(\alpha,\gamma) =\beta_k(\gamma)\,,$$
where $B(k,r)$ is the Stirling number of the second kind defined in \eqref{eq:stirling2} and $\beta_k(\gamma)$ is the $k$-th \MP moment.
\end{lemma}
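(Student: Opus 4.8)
The plan is to pass to the limit termwise in the explicit moment formula \eqref{mainresult}, exploiting that all the $\alpha$-dependence is concentrated in a few Gamma factors. Put $\varepsilon=1-\alpha/2$, so that $\varepsilon\to0^{+}$ as $\alpha\to2^{-}$ and $\varepsilon\to1^{-}$ as $\alpha\to0^{+}$. In \eqref{mainresult} the prefactor $2/\alpha$, the powers $(\alpha/2)^{s}$, and every ratio $\Gamma(d_i(\tI,T))/\Gamma(N_i(\tI))$ remain bounded and bounded away from $0$ on each boundary, so the behaviour of a summand is dictated by the powers of $\Gamma(1-\alpha/2)$ it carries (namely $-r+q+1$ from the outer factor and $-s$ from $\big(\tfrac{\alpha/2}{\Gamma(1-\alpha/2)}\big)^{s}$) together with the edge product $\prod_{(i,t)\in\Delta(\tI,T)}\Gamma\big((m_{it}-\alpha)/2\big)$. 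Since all sums in \eqref{mainresult} are finite, it is enough to treat one summand at a time.

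\emph{The boundary $\alpha\to2^{-}$.} Here $\Gamma(1-\alpha/2)=\Gamma(\varepsilon)\sim\varepsilon^{-1}\to\infty$, whereas an edge factor $\Gamma\big((m_{it}-\alpha)/2\big)=\Gamma\big(m_{it}/2-1+\varepsilon\big)$ tends to the finite positive constant $\Gamma(m_{it}/2-1)$ if $m_{it}\ge4$ and equals $\Gamma(\varepsilon)$ if $m_{it}=2$. Writing $n_{2}=n_{2}(\tI,T)$ for the number of edges of $\Delta(\tI,T)$ of degree exactly $2$, a fixed summand of \eqref{mainresult} is therefore of order $\Gamma(\varepsilon)^{\,n_{2}-r+q+1-s}$ times a factor with a strictly positive limit, so it remains to show the exponent is $\le-1$. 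As $\Delta^{0}(\tI,T)$ is a tree it has $N_{e}=\#\{\tI\}+s-1=(r-q)+s-1$ edges, while $\sum_{(i,t)}m_{it}=2|\tI|$; hence $n_{2}=N_{e}$ would force $|\tI|=(r-q)+s-1$. But $\tI$ is irreducible, so $\tI\in\mathcal{C}_{r-q,|\tI|}^{1}$, in particular $\tI\notin\mathcal{C}_{r-q,|\tI|}^{0}$ (note $|\tI|\ge4$); Lemma~\ref{lem:g} then gives $s\le g(\tI)\le|\tI|-(r-q)$, whence $(r-q)+s-1\le|\tI|-1<|\tI|$ — a contradiction. Thus $n_{2}\le N_{e}-1$ and $n_{2}-r+q+1-s\le N_{e}-r+q-s=-1$, so every summand vanishes in the limit and $\mu_k(\alpha,\gamma)\to\beta_k(\gamma)$.

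\emph{The boundary $\alpha\to0^{+}$.} Now $\Gamma(1-\alpha/2)\to1$ and $\Gamma\big((m_{it}-\alpha)/2\big)\to\Gamma(m_{it}/2)$, so a summand with inner index $s$ behaves like $\tfrac{2}{\alpha}(\alpha/2)^{s}$ times a factor with a finite limit; for $s\ge2$ this is $O(\alpha^{s-1})\to0$, and only the $s=1$ terms survive. For $s=1$ one has $\mathcal{C}_{1,|\tI|}(\tI)=\{(1,\dots,1)\}$, and in $\Delta(\tI,(1,\dots,1))$ each vertex $i$ satisfies $d_i=1$ with unique edge of degree $m_{i1}=2N_i(\tI)$, so the associated summand equals $\gamma^{r-1}\,\Gamma(1-\alpha/2)^{-(r-q)}\prod_{i=1}^{r-q}\Gamma\big(N_i(\tI)-\alpha/2\big)/\Gamma\big(N_i(\tI)\big)$, which tends to $\gamma^{r-1}$. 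Summing over $I\in\mathcal{C}_{r,k}^{(q)}$ and then over $q$ and $r$,
\[
\lim_{\alpha\to0^{+}}\mu_k(\alpha,\gamma)=\beta_k(\gamma)+\sum_{r=2}^{k-2}\gamma^{r-1}\sum_{q=0}^{r-2}\#\mathcal{C}_{r,k}^{(q)}.
\]
Finally, $0\le\simples(I)\le r-2$ is equivalent to $S(I)\neq\emptyset$, so for each $r$ the sets $\mathcal{C}_{r,k}^{(q)}$, $0\le q\le r-2$, partition $\mathcal{C}_{r,k}\setminus\mathcal{C}_{r,k}^{0}$; combining Lemma~\ref{lem:crk} with \eqref{lem:lemma3.4} gives $\sum_{q=0}^{r-2}\#\mathcal{C}_{r,k}^{(q)}=B(k,r)-\#\mathcal{C}_{r,k}^{0}$, an identity which also holds trivially (both sides zero) for $r\in\{1,k-1,k\}$. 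Since $\beta_k(\gamma)=\sum_{r=1}^{k}\gamma^{r-1}\#\mathcal{C}_{r,k}^{0}$ by \eqref{eq:momentsmp} and \eqref{lem:lemma3.4}, the MP contributions cancel and the right-hand side becomes $\sum_{r=1}^{k}\gamma^{r-1}B(k,r)=\tfrac1\gamma\sum_{r=1}^{k}\gamma^{r}B(k,r)$, as asserted.

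\emph{Main obstacle.} The only delicate point is the $\alpha\to2^{-}$ case: one must exclude the possibility that $\Delta(\tI,T)$ consists solely of double edges, for otherwise the divergence $\Gamma(1-\alpha/2)\to\infty$ in the edge product would exactly cancel the decay coming from $\Gamma(1-\alpha/2)^{-r+q+1-s}$ and the summand would not vanish. Irreducibility of $\tI$, used through the equality case of Lemma~\ref{lem:g}, is exactly what rules this out.
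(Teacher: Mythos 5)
Your proposal is correct and follows essentially the same route as the paper: termwise limits in \eqref{mainresult}, with only the $s=1$ terms surviving as $\alpha\to0^+$ and the count $\sum_q\#\mathcal{C}_{r,k}^{(q)}=B(k,r)-\#\mathcal{C}_{r,k}^0$ producing the Stirling-number formula, while for $\alpha\to2^-$ the net power of $\Gamma(1-\alpha/2)$ in each summand is at most $-1$ because some edge of $\Delta(\tI,T)$ has degree at least $4$. The only difference is cosmetic: where the paper simply asserts the existence of an edge with $m_{it}\ge 4$ from irreducibility of $\tI$, you derive it explicitly from the degree-sum identity and the equality case of Lemma~\ref{lem:g}, which is a welcome extra detail but not a different argument.
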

\begin{proof}
For $\alpha\in (0,2)$ and $k\ge 1$, we use the decomposition $\mu_k(\alpha,\gamma)=\beta_k(\gamma) + d_k(\alpha,\gamma)$.
 For $k\in \{1,2,3\}$ we have $ \mu_k(\alpha,\gamma) =\beta_k(\gamma)=\frac{1}{\gamma} \sum_{r=1}^{k} \gamma^{r} B(k,r)$ by \eqref{eq:stirling2}. 

Hence, it suffices to assume $k\ge 4$. We have
\begin{equation*}
    \begin{split}
\lim_{\alpha \to 0^+} d_k(\alpha,\gamma)&=  \lim_{\alpha \to 0^+} \sum_{r=2}^{k-2} \gamma^{r-1} \sum_{q=0}^{r-2}  
 \sum_{I\in \mathcal{C}_{r,k}^{(q)}} 
\sum_{s=1}^{t^{\star}(\tI)} (\alpha/2)^{s-1}
  \sum_{T\in \mathcal{C}_{s,|\tI|}(\tI)} \left(\prod_{i=1}^{r-q} \frac{\Gamma(d_i(\tI,T))}{\Gamma(N_i(\tI))} \right)\\
& \quad \prod_{(i,t)\in \Delta(\tI,T)} \Gamma\Big(\frac{m_{it}(\tI,T)}{2}\Big)\\
&= \sum_{r=2}^{k-2} \gamma^{r-1} \sum_{q=0}^{r-2}  
 \sum_{I\in \mathcal{C}_{r,k}^{(q)}} 
  \sum_{T\in \mathcal{C}_{1,|\tI|}(\tI)} \left(\prod_{i=1}^{r-q} \frac{\Gamma(d_i(\tI,T))}{\Gamma(N_i(\tI))} \right) \prod_{(i,t)\in \Delta(\tI,T)} \Gamma\Big(\frac{m_{it}(\tI,T)}{2}\Big)\\
	&= \sum_{r=2}^{k-2} \gamma^{r-1} \sum_{q=0}^{r-2}  
 \sum_{I\in \mathcal{C}_{r,k}^{(q)}} 
   \left(\prod_{i=1}^{r-q} \frac{\Gamma(1)}{\Gamma(N_i(\tI))} \right)  \left(\prod_{i=1}^{r-q} \Gamma(N_i(\tI))\right)\\
	&= \sum_{r=2}^{k-2} \gamma^{r-1} (\#\mathcal{C}_{r,k}- \#\mathcal{C}_{r,k}^0)\\
	&= \sum_{r=2}^{k-2} \gamma^{r-1} \left[B(k,r)-  \frac{1}{r} \binom{k}{r-1} \binom{k-1}{r-1}\right]\,,
\end{split}
\end{equation*}
where Lemma \ref{lem:crk} and \eqref{lem:lemma3.4} were used for the last two equalities, respectively.
Since $\beta_k(\gamma)=\sum_{r=1}^{k} \gamma^{r-1} \#\mathcal{C}_{r,k}^0$ this implies that 
\begin{equation*}
    \begin{split}
\lim_{\alpha \to 0^+} \mu_k(\alpha,\gamma)&=   1+\binom{k}{2} \gamma^{k-2} + \gamma^{k-1} +\sum_{r=2}^{k-2} \gamma^{r-1} B(k,r)\\
&=\frac{1}{\gamma} \sum_{r=1}^{k} \gamma^{r} B(k,r)
\end{split}
\end{equation*}
since $B(k,k)=1, B(k,k-1)=\binom{k}{2}$ and $B(k,1)=1$.

Next, we turn to the limit $\alpha \to 2^-$ and observe that $\lim_{\alpha \to 2^-}\Gamma(1-\alpha/2)=\infty$. We have 
\begin{equation*}
    \begin{split}
\lim_{\alpha \to 2^-} d_k(\alpha,\gamma)&=  \lim_{\alpha \to 2^-} \sum_{r=2}^{k-2} \gamma^{r-1} \sum_{q=0}^{r-2}  
 \sum_{I\in \mathcal{C}_{r,k}^{(q)}} 
\sum_{s=1}^{t^{\star}(\tI)} (\Gamma(1-\alpha/2))^{-r+q+1-s}\\
& \quad  \sum_{T\in \mathcal{C}_{s,|\tI|}(\tI)} \left(\prod_{i=1}^{r-q} \frac{\Gamma(d_i(\tI,T))}{\Gamma(N_i(\tI))} \right)
 \prod_{(i,t)\in \Delta(\tI,T)} \Gamma\Big(\frac{m_{it}(\tI,T)-\alpha}{2}\Big)\,.
\end{split}
\end{equation*}
For $r-q\ge 2$ let $\tI \in \mathcal{C}_{r-q,|\tI|}$ and $T\in \mathcal{C}_{s,|\tI|}(\tI)$. By definition of the set $\mathcal{C}_{s,|\tI|}(\tI)$, the number of distinct edges of $\Delta(\tI,T)$ is $N_e(\tI,T)= r-q+s-1$. Since $\tI$ is not totally reducible we have $m_{it}(\tI,T)\ge 4$ for at least one $(i,t)\in \Delta(\tI,T)$. Using these two facts we see for $\alpha<2$ sufficiently close to $2$ that 
\begin{equation}\label{eq:sdgfsds}
\prod_{(i,t)\in \Delta(\tI,T)} \Gamma\Big(\frac{m_{it}(\tI,T)-\alpha}{2}\Big)
\le c_{|\tI|} \,(\Gamma(1-\alpha/2))^{r-q+s-2}\,,
\end{equation}
where $c_{|\tI|}>0$ is a constant only depending on $|\tI|$. Therefore we have
\begin{equation*}
    \begin{split}
0\le \lim_{\alpha \to 2^-} d_k(\alpha,\gamma)&\le  \lim_{\alpha \to 2^-} \frac{c_k}{\Gamma(1-\alpha/2)} \sum_{r=2}^{k-2} \gamma^{r-1} \sum_{q=0}^{r-2} 
 \sum_{I\in \mathcal{C}_{r,k}^{(q)}} 
\sum_{s=1}^{t^{\star}(\tI)} \sum_{T\in \mathcal{C}_{s,|\tI|}(\tI)} \left(\prod_{i=1}^{r-q} \frac{\Gamma(d_i(\tI,T))}{\Gamma(N_i(\tI))} \right)\\
&=0\,.
\end{split}
\end{equation*}
\end{proof}

\begin{lemma}\label{lem:poisson}
Let $Z$ be a random variable  with moments
$$\E[Z^k]=\frac{1}{\gamma} \sum_{r=1}^{k} \gamma^{r} B(k,r)\,, \quad k\ge 1\,.$$
Then $Z$ follows a modified Poisson$(\gamma)$ distribution defined by
$$\P(Z=0)= 1-\frac1\gamma +\frac1\gamma \e^{-\gamma}   \qquad \text{ and } \qquad
\P(Z=k)=\e^{-\gamma}\frac{\gamma^{k-1}}{k!}\,,\qquad k\ge 1\,.$$
\end{lemma}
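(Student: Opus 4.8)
The plan is to recognise the moment sequence $\E[Z^k]=\gamma^{-1}\sum_{r=1}^k\gamma^rB(k,r)$ as $\gamma^{-1}$ times the moments of a Poisson law, to exhibit the explicit distribution $q_\gamma$ that realises it, and then to invoke determinacy of the associated moment problem to conclude $Z\sim q_\gamma$. First I would invoke the classical Touchard identity: if $N\sim\mathrm{Poisson}(\gamma)$, then writing monomials in terms of falling factorials, $x^k=\sum_{r=0}^kB(k,r)\,x^{(r)}$ with $x^{(r)}=x(x-1)\cdots(x-r+1)$, and using that the factorial moments of $N$ are $\E[N^{(r)}]=\gamma^r$, one gets $\E[N^k]=\sum_{r=0}^kB(k,r)\gamma^r=\sum_{r=1}^kB(k,r)\gamma^r$ for $k\ge1$, since $B(k,0)=0$. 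Hence the hypothesis on $Z$ says precisely that $\E[Z^k]=\gamma^{-1}\E[N^k]$ for every $k\ge1$.

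Next I would check that the modified Poisson law $q_\gamma$ is a genuine probability distribution with exactly these moments. Nonnegativity $q_\gamma(0)=1-\gamma^{-1}(1-\e^{-\gamma})\ge0$ follows from $1-\e^{-\gamma}\le\gamma$ (tangent-line bound on $\e^{-x}$), and $\sum_{k\ge1}q_\gamma(k)=\gamma^{-1}\sum_{k\ge1}\P(N=k)=\gamma^{-1}(1-\e^{-\gamma})$ shows the masses sum to $1$. Since $q_\gamma(k)=\gamma^{-1}\P(N=k)$ for $k\ge1$ and the atom at $0$ contributes nothing to positive moments,
\[
\sum_{j\ge1}j^k q_\gamma(j)=\gamma^{-1}\sum_{j\ge1}j^k\P(N=j)=\gamma^{-1}\E[N^k]=\gamma^{-1}\sum_{r=1}^kB(k,r)\gamma^r,\qquad k\ge1,
\]
which matches $\E[Z^k]$.

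Finally I would argue that this moment sequence determines a unique law, so that $Z$ must be distributed according to $q_\gamma$. The cleanest route is via the moment generating function of the candidate distribution,
\[
\sum_{k\ge0}\frac{t^k}{k!}\,\E[Z^k]=1+\gamma^{-1}\sum_{k\ge1}\frac{t^k}{k!}\,\E[N^k]=1+\gamma^{-1}\big(\e^{\gamma(\e^t-1)}-1\big),
\]
which is finite (indeed entire) in $t$; hence the moment problem for $(\E[Z^k])_{k\ge0}$ is determinate, and since $q_\gamma$ realises these moments, $Z$ has law $q_\gamma$. Alternatively one can verify Carleman's condition directly from the bound on $B(k,r)$ used in the proof of Theorem~\ref{thm:mainsimplified}, which gives $\E[Z^{2k}]\le C(\gamma)(\gamma+2k)^{2k}$ and therefore $\sum_k\E[Z^{2k}]^{-1/(2k)}=\infty$.

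I do not expect any genuine obstacle here; this lemma is a purely computational identification. The only points needing a little care are the bookkeeping in the Touchard step (equivalently, recalling $\E[N^k]=\sum_rB(k,r)\gamma^r$ for Poisson$(\gamma)$) and the determinacy argument, the latter being essentially already available from the Carleman estimate carried out in Section~\ref{sec:mainresult}.
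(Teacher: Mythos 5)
Your proposal is correct and follows essentially the same route as the paper: both rest on the identity $\sum_{r=1}^k B(k,r)\gamma^r = T_k(\gamma) = \E[N^k]$ for $N\sim\mathrm{Poisson}(\gamma)$ (equivalently the exponential generating function $\sum_k T_k(\gamma)t^k/k! = \e^{\gamma(\e^t-1)}$), which identifies the moment generating function of $Z$ as $1-\tfrac1\gamma+\tfrac1\gamma\E[\e^{tN}]$ and hence the law as $q_\gamma$. The only difference is that you spell out two points the paper leaves implicit — that $q_\gamma$ is indeed a probability measure and that the (entire) moment generating function makes the moment problem determinate — which is a harmless, slightly more careful rendering of the same argument.
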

\begin{proof}
We compute the moment generating function of $Z$. Note that $B(k,0)=0$ for $k\ge 1$ and $B(0,0)=1$. Since $\E[Z^0]=1$ we have
\begin{equation*}
\begin{split}
\E[\e^{tZ}]&= \sum_{k=0}^{\infty} \frac{t^k}{k!}\E[Z^k]=1+ \sum_{k=1}^{\infty} \frac{t^k}{k!} \frac{1}{\gamma} \sum_{r=1}^{k} \gamma^{r} B(k,r)\\
&= 1-\frac{1}{\gamma} + \frac{1}{\gamma}\sum_{k=0}^{\infty} \frac{t^k}{k!}  \sum_{r=0}^{k} \gamma^{r} B(k,r)\\
&= 1-\frac{1}{\gamma} + \frac{1}{\gamma}\sum_{k=0}^{\infty} \frac{t^k}{k!}  T_k(\gamma)
\end{split}
\end{equation*}
where $T_k(\gamma):=\sum_{r=0}^{k} \gamma^{r} B(k,r)$ is the $k$-th Touchard polynomial which satisfy the identity 
$$\sum_{k=0}^{\infty} \frac{t^k}{k!}  T_k(\gamma) =
\e^{\gamma(\e^t-1)}\,.$$
This is the moment generating function
of a Poisson$(\gamma)$ distributed random variable $W$.
Therefore, 
\begin{equation*}
\E[\e^{tZ}]=   1-\frac{1}{\gamma} +\frac{1}{\gamma}\e^{\gamma(\e^t-1)}
          =1-\frac{1}{\gamma} +\frac{1}{\gamma}  \E[\e^{tW}].
\end{equation*}
In particular for $\gamma=1$,  $Z$ is Poisson$(\gamma)$ distributed.
In general, $Z$ follows the distribution,
$$\P(Z=0)= 1-\frac1\gamma +\frac1\gamma \e^{-\gamma}   \qquad \text{ and } \qquad
\P(Z=k)=\e^{-\gamma}\frac{\gamma^{k-1}}{k!}\,,\qquad k\ge 1\,.$$
%\begin{equation*}
%\begin{split}
%  \P(Z=0)&=      1-\frac{1}{\gamma} +\frac{1}{\gamma}  \e^{-\gamma}  \,,\\
%  \P(Z=k)&=\e^{-\gamma}\frac{\gamma^{k-1}}{k!}\,,\qquad k\ge 1\,.
%\end{split}
%\end{equation*}
Note that $1-\frac{1}{\gamma} +\frac{1}{\gamma}  e^{-\gamma}\ge0$ for
all $\gamma>0$. 
The proof is complete.
\end{proof}

%%%%%%%%%%%%%%%%%%%%%%%%%%%%%%%%%%%%%%%%%%%%%%
%% Single Appendix:                         %%
%%%%%%%%%%%%%%%%%%%%%%%%%%%%%%%%%%%%%%%%%%%%%%
%\begin{appendix}
%\section*{???}%% if no title is needed, leave empty \section*{}.
%\end{appendix}
%%%%%%%%%%%%%%%%%%%%%%%%%%%%%%%%%%%%%%%%%%%%%%
%% Multiple Appendixes:                     %%
%%%%%%%%%%%%%%%%%%%%%%%%%%%%%%%%%%%%%%%%%%%%%%
%\begin{appendix}
%\section{???}
%
%\section{???}
%
%\end{appendix}

%%%%%%%%%%%%%%%%%%%%%%%%%%%%%%%%%%%%%%%%%%%%%%
%% Support information, if any,             %%
%% should be provided in the                %%
%% Acknowledgements section.                %%
%%%%%%%%%%%%%%%%%%%%%%%%%%%%%%%%%%%%%%%%%%%%%%
%\begin{acks}[Acknowledgments]
% The authors would like to thank ...
%\end{acks}
%%%%%%%%%%%%%%%%%%%%%%%%%%%%%%%%%%%%%%%%%%%%%%
%% Funding information, if any,             %%
%% should be provided in the                %%
%% funding section.                         %%
%%%%%%%%%%%%%%%%%%%%%%%%%%%%%%%%%%%%%%%%%%%%%%
\begin{funding}
J. Heiny was supported by the Deutsche Forschungsgemeinschaft (DFG) through RTG 2131 High-dimensional Phenomena in Probability – Fluctuations and Discontinuity. J. Yao's research was supported by  the HKSAR RGC grant GRF-17306918.
\end{funding}

%%%%%%%%%%%%%%%%%%%%%%%%%%%%%%%%%%%%%%%%%%%%%%
%% Supplementary Material, including data   %%
%% sets and code, should be provided in     %%
%% {supplement} environment with title      %%
%% and short description. It cannot be      %%
%% available exclusively as external link.  %%
%% All Supplementary Material must be       %%
%% available to the reader on Project       %%
%% Euclid with the published article.       %%
%%%%%%%%%%%%%%%%%%%%%%%%%%%%%%%%%%%%%%%%%%%%%%
%\begin{supplement}
%\stitle{???}
%\sdescription{???.}
%\end{supplement}

%%%%%%%%%%%%%%%%%%%%%%%%%%%%%%%%%%%%%%%%%%%%%%%%%%%%%%%%%%%%%
%%                  The Bibliography                       %%
%%                                                         %%
%%  imsart-???.bst  will be used to                        %%
%%  create a .BBL file for submission.                     %%
%%                                                         %%
%%  Note that the displayed Bibliography will not          %%
%%  necessarily be rendered by Latex exactly as specified  %%
%%  in the online Instructions for Authors.                %%
%%                                                         %%
%%  MR numbers will be added by VTeX.                      %%
%%                                                         %%
%%  Use \cite{...} to cite references in text.             %%
%%                                                         %%
%%%%%%%%%%%%%%%%%%%%%%%%%%%%%%%%%%%%%%%%%%%%%%%%%%%%%%%%%%%%%

%% if your bibliography is in bibtex format, uncomment commands:
\bibliographystyle{imsart-number} % Style BST file (imsart-number.bst or imsart-nameyear.bst)
\bibliography{libraryFeb2020}       % Bibliography file (usually '*.bib')

%% or include bibliography directly:
% \begin{thebibliography}{}
% \bibitem{b1}
% \end{thebibliography}

\end{document}